\documentclass[12pt]{article}
\usepackage{setspace}
\setstretch{1.0}
\topmargin=-20mm \oddsidemargin=-0.2cm \evensidemargin=-0.2cm
\textwidth=160mm \textheight=240mm \pagenumbering{arabic}

\usepackage{natbib}
\usepackage{amsmath,hyperref}
\usepackage{amsfonts}
\usepackage{amsthm}
\usepackage{amsmath}
\usepackage{amscd}
\usepackage[latin2]{inputenc}
\usepackage{t1enc}
\usepackage[mathscr]{eucal}
\usepackage{indentfirst}
\usepackage{graphicx}
\usepackage{graphics}
\usepackage{pict2e}
\numberwithin{equation}{section}
\usepackage[margin=2.9cm]{geometry}
\usepackage{epstopdf}
\usepackage[T1]{fontenc}
\usepackage{amssymb}
\usepackage[normalem]{ulem}
\usepackage{comment}
\usepackage{xcolor}
\useunder{\uline}{\ul}{}

 \theoremstyle{plain}
\newtheorem{Th}{Theorem}[section]
\newtheorem{Lemma}[Th]{Lemma}

\theoremstyle{definition}

\newtheorem{Rem}{Remark}
\newtheorem{?}[Th]{Problem}

\newcommand\relphantom[1]{\mathrel{\phantom{#1}}}

\newcommand{\be}{\mathbf{e}}

\newcommand{\bq}{\mathbf{q}}
\newcommand{\bs}{\mathbf{s}}
\newcommand{\bu}{\mathbf{u}}
\newcommand{\bw}{\mathbf{w}}
\newcommand{\bx}{\mathbf{x}}
\newcommand{\by}{\mathbf{y}}

\newcommand{\bA}{\mathbf{A}}
\newcommand{\bB}{\mathbf{B}}
\newcommand{\bC}{\mathbf{C}}
\newcommand{\bD}{\mathbf{D}}
\newcommand{\bG}{\mathbf{G}}

\newcommand{\bI}{\mathbf{I}}
\newcommand{\bM}{\mathbf{M}}
\newcommand{\bO}{\mathbf{O}}

\newcommand{\bQ}{\mathbf{Q}}

\newcommand{\bS}{\mathbf{S}}
\newcommand{\bU}{\mathbf{U}}
\newcommand{\bV}{\mathbf{V}}

\newcommand{\bT}{\mathbf{T}}
\newcommand{\bX}{\mathbf{X}}
\newcommand{\bY}{\mathbf{Y}}
\newcommand{\bZ}{\mathbf{Z}}
\newcommand{\bSigma}{\mathbf{\Sigma}}
\newcommand{\btSigma}{\tilde{\mathbf{\Sigma}}}
\newcommand{\bGamma}{\mathbf{\Gamma}}
\newcommand{\bLambda}{\mathbf{\Lambda}}

\newcommand{\diag}{\mbox{diag}}

\allowdisplaybreaks

\begin{document}

\title{Asymptotic independence of spiked eigenvalues and linear spectral statistics for large sample covariance matrices}
\author{Zhixiang Zhang\vspace{0.5cm}\\
 Nanyang Technological University \vspace{0.5cm} \\
Shurong Zheng \vspace{0.5cm}\\
Northeast Normal University \vspace{0.5cm} \\
Guangming Pan \vspace{0.5cm} \\
  Nanyang Technological University \vspace{0.5cm} \\
Pingshou  Zhong \vspace{0.5cm} \\
     University of Illinois at Chicago
}
\date{}

\maketitle

\begin{abstract}
We consider general high-dimensional spiked sample covariance models and show that their leading sample spiked eigenvalues and their linear spectral statistics are asymptotically
independent when the sample size and dimension are proportional to each other.
As a byproduct, we also establish the central limit theorem of the leading sample spiked eigenvalues by removing the block diagonal assumption on the
population covariance matrix, which is commonly needed in the literature. Moreover, we propose consistent estimators of  the $L_4$ norm of the spiked population eigenvectors.
Based on these results, we  develop a  new statistic to test  the equality of two spiked population covariance matrices. Numerical studies show that the new test procedure
is more powerful than some existing methods.


\bigskip\textit{Key words: leading spiked eigenvalues, sample covariance matrix, linear spectral statistics, central limit theorem.}

\end{abstract}

\begin{quote}
\noindent\baselineskip=24pt
\end{quote}


\section{Introduction}
Sample covariance matrices play a fundamental role in traditional multivariate statistics (see \cite{anderson1962introduction}). There has also been a significant interest in studying the
asymptotic properties of the eigenvalues and eigenvectors of the sample covariance matrix in the high-dimensional setting where the data dimension $p$ grows with the sample size $n$.
These asymptotic properties have been used to make statistical inference, such as hypothesis testing or parameter estimation, about the population covariance matrices of high-dimensionaldata.
Random matrix theory turns out to be a powerful tool for studying such asymptotic properties. One can refer to the monograph of \cite{bai2010} or the review paper of
 \cite{paul2014} for a comprehensive review.

 \indent The spiked covariance model appears naturally in the areas of wireless communication,
speech recognition,  genomics and genetics, finance, etc. It refers to a phenomenon that if the largest population eigenvalue is greater than some critical value then the largest sample eigenvalue will jump outside the bulk spectrum of the corresponding sample covariance matrix. Such a phenomenon has received much attention recently.
 The pioneer work of \cite{johnstone2001distribution} considered a special spiked model  with a $p\times p$ diagonal population covariance matrix
\begin{equation}
\bSigma = \text{diag} ( \alpha_1, \cdots, \alpha_K, 1, \cdots, 1),
\end{equation}
where  $\alpha_1 \geq \cdots \geq \alpha_K > 1$ are referred to as spikes and $K<\infty$ is the number of spikes.
\cite{baik2006eigenvalues} investigated the almost sure limits of the largest eigenvalues which  depend on the critical value $1+\sqrt{\gamma}$  when $p/n \rightarrow \gamma>0$. \cite{paul2007asymptotics} established the central limit theorem for the spiked sample eigenvalues under the Gaussian assumption on the data. 
\cite{bai2008central} extended Paul's results by removing the Gaussian assumption, but assumed a block diagonal structure on the population covariance matrix. \cite{bai2012} further generalized the spiked model by considering an arbitrary non-spiked part of the population covariance matrix instead of identity (but still a block diagonal structure). They classified the spikes into distant spikes and close spikes,  and discussed the almost sure limits for two types of spikes and established a central limit theorem for distant spikes.
\cite{jiang2019} obtained the CLT for the spiked eigenvalues without assuming a block diagonal structure on the population covariance matrix in addition to the finite 4th moment on the entries of the data matrix is relaxed to a tail probability decay.  However they had to pay a price that the largest entries of the population eigenvectors corresponding to the spikes tend to zero or the fourth moment of the underlying random variables must match with that of Gaussian distribution due to their moment matching method. Their condition about the largest entries of the population eigenvectors is hard to verify in practice and excludes all the diagonal population covariance matrices. In addition to  the above literature about the bounded spikes we would also like to mention that there is some literature about the unbounded spikes
and one may see \cite{cai2019} and the reference therein.

\indent The study of linear spectral statistics (LSS) of sample covariance matrices is another important topic in statistics and random matrix theory. The most influential work is \cite{bai2004}. They showed that the LSS of sample covariance matrices converge to normal distribution under some moment restrictions. Further refinements were carried out under different relaxed settings. \cite{pan2008central} improved Theorem 1.1 in \cite{bai2004} by removing the constraint on the fourth moment of the underlying random variables. \cite{pan2014comparison} showed the CLT of LSS for non-centered sample covariance matrices and discussed the difference between the centered and non-centered sample covariance matrices. \cite{zheng2015substitution} provided similar results for centralized and noncentralized sample covariance matrices in a unified framework. Furthermore \cite{najim2016gaussian} also provided CLT  in terms of vanishing Levy-Prohorov distance between the  LSS distribution and a Gaussian probability distribution.

\indent However, even though a lot of effort has been devoted to these two topics separately the relationship between the extreme eigenvalues and linear spectral statistics has not been well understood. \cite{baik2018ferromagnetic} obtained the joint normal distribution of the largest eigenvalue and LSS for a spiked Wigner matrix. They showed that the asymptotic joint distribution of the largest eigenvalue and LSS converge to a bivariate normal distribution with the covariance dependent on the the third moment of entries of the Wigner matrix. In this case the spiked eigenvalues and LSS are asymptotically independent if the third moment is zero.  Recently, \cite{li2019asymptotic} established that the extreme eigenvalues and the trace of sample covariance matrices are jointly asymptotically normal and independent for a block diagonal population covariance matrix.
 \\
\indent This paper focuses on more general spiked covariance matrices instead of block diagonal population covariance matrices. Specifically speaking, consider a population covariance matrix
\begin{equation}\label{050601}\bf \Sigma = \bV \begin{pmatrix} \bLambda_S& 0\\
0 & \bLambda_P \end{pmatrix} \bV^{\intercal}\end{equation}
where $\bV$ is an orthogonal matrix, $\bLambda_S$ is a diagonal matrix consisting of the bounded and descending spiked eigenvalues, and $\bLambda_P$ is the diagonal matrix of non-spiked eigenvalues.

 Our main contributions are summarized as follows. For the first time we establish CLT for the leading spiked eigenvalues of the sample covariance matrices with the general spiked covariance matrices $\bf\Sigma$ in (\ref{050601}). We need neither the block diagonal structure unlike Bai and Yao (2008, 2012) nor the maximum absolute value
of the eigenvector of the corresponding spikes tending to zero nor requiring the match of the 4th moment with  the standard Gaussian distribution (i.e.,
 the $4$-th moment is 3) unlike Jiang and Bai (2019). We also show that the extreme eigenvalues and  LSS of large sample covariance matrices are asymptotically independent.  Moreover consistent estimators of the $L_4$ norm of population eigenvectors associated with the leading sample spikes are proposed.


The remaining sections are organized as follows. Section 2 presents the main results about the asymptotic distribution of the largest sample spikes, the asymptotic independence between the largest sample spikes and the linear spectral statistics and the estimator of the population eigenvectors corresponding the largest spikes. We also explore an application of our main results in the two sample hypothesis testing
about  covariances in Section 2. The simulation is reported in Section 3.

Throughout the paper, we say that an event $\Omega_n$ holds with high probability if $P(\Omega_n) \geq 1-O(n^{-l})$ for some large constant  $l>0$. We use $I(\mathcal{A})$ to denote an indicator function of an event $\mathcal{A}$. The intersection of events $\mathcal{A}$ and $\mathcal{B}$ is denoted by $\mathcal{A}\cap \mathcal{B}$, or abbreviated by $\mathcal{A}\mathcal{B}$. The spectral norm of a matrix $M$ is denoted by $\| M \|$.


\section{The main results}

 Consider the data matrix $\bGamma \bX$,  where $\bGamma$ is a $p\times p$ deterministic matrix with  $\bGamma\bGamma^{\intercal}=\bf\Sigma$ and ${\bX} =(x_{ij})$ is  a $p\times n$ random matrix with entries $x_{ij}= n^{-1/2} q_{ij}$  where $q_{ij}$ are independent random variables satisfying Assumption 1 below.
 The sample covariance matrix has the form  $$ \bS_n  = \bGamma \bX\bX^\intercal \bGamma^\intercal.$$
  Order the eigenvalues of $\bS_n$ as $\lambda_1 \geq \lambda_2 \geq \cdots \geq\lambda_p$. The Denote the singular value decomposition of $\bGamma$ by
 \begin{equation}\label{32nan}\bf
 \Gamma = \bV \begin{pmatrix} \bLambda_S^{1/2} & 0\\
0 & \bLambda_P^{1/2} \end{pmatrix} \bU^{\intercal}
\end{equation} where $\bU$ and $\bV$ are orthogonal matrices, $\bLambda_S$ is a diagonal matrix consisting of the spiked eigenvalues in descending order and $\bLambda_P$ is the diagonal matrix of the non-spiked eigenvalues. To be more specific, denote the eigenvalues of the spiked part $\bLambda_S$ as  $\alpha_1 \geq \alpha_2 \geq\cdots \geq \alpha_K$, and eigenvalues of the non-spiked part as
$\alpha_{K+1}\geq\alpha_{K+2}\cdots \geq \alpha_{p}$.
Partition $\bU$ as $\bU =( \bU_1 , \bU_2 )$, where $\bU_1$ is a $p\times K$ submatrix of $\bU$. Let $\bu_i = (u_{i1},\cdots,u_{ip})^\intercal$ be the $i$-th column of $\bU_1$.  Define
\begin{equation}
\label{2o3jb}\bf \Sigma_{1P} = U_2\Lambda_p U_2^{\intercal}.
\end{equation}

\subsection{Limiting laws for spiked eigenvalues}
We first specify the assumptions for establishing CLTs of the leading sample spiked eigenvalues.

\noindent\textbf{Assumption 1.} The double array $\{q_{ij}:i = 1,\cdots,p, j = 1,\cdots,n\}$ consists of independent and identically distributed random variables, with $Eq_{11} =0, E|q_{11}|^2=1$ and $E|q_{11}|^4=\gamma_4$.\\
\textbf{Assumption 2.} $p/n = c_n \rightarrow c>0$ as $n\rightarrow \infty$.\\
\textbf{Assumption 3.} The $p \times p$ matrix $\bSigma =\bGamma\bGamma^{\intercal}$ has a bounded spectral norm. Furthermore, denote the empirical spectral distribution (ESD) of $\bSigma$ by $H_n$, which tends to a nonrandom limiting distribution $H$ as $p \rightarrow \infty$.\\

For the next assumption, we denote by $\Gamma_\mu$ the support for any measure $\mu$ on $\mathbb{R}$.  For $\alpha \notin \Gamma_H $ and $\alpha \neq 0$, define
  \begin{equation}\label{43afb}
  \psi(\alpha) := \alpha + c \alpha \int \frac{t}{\alpha -t}dH(t).
  \end{equation}
Its derivative is \begin{equation}
\psi^\prime(\alpha) = 1 - c\int \frac{t^2}{(\alpha-t)^2}dH(t).
\end{equation}
Define $\psi_n(\alpha)$ from (\ref{43afb}) with $H, c$  replaced by $H_n,c_n$.

\noindent\textbf{Assumption 4.}  Suppose that the population covariance matrix $\bSigma$ has $K$ spiked eigenvalues:  $\alpha_1 > \cdots >  \alpha_K$, lying outside the support of $H$, and satisfying $\psi^{\prime} (\alpha_k)>0$ for $1 \leq k\leq K$.

\cite{bai2012} provided a complete characterization of sample spikes according to the sign of $\psi^\prime(\alpha)$.
 If $\psi^\prime(\alpha) > 0$ then the corresponding sample spiked eigenvalues have limits outside the support of $F^{c,H}$, the limit of the empirical spectral distribution of $\bS_n$. They called them distant spikes in this case. Here we need to clarify that although \cite{bai2012} assumed that the population covariance matrices are block diagonal but this assumption is not essential and can be removed. This is because their method of deriving almost sure convergence relies on their Propositions 3.1 and 3.2 and these two results regarding the exact separation were first appeared in \cite{bai1998no, bai1999exact} without a block diagonal structure of the population covariance matrices.  



\noindent \textbf{Assumption 5.} Assume that for $i=1,\cdots,K$ the following limits exist:
$$\sigma_i^2 = \lim_{p\rightarrow \infty} (\gamma_4-3)\frac{\alpha_i^2 \{\psi^\prime(\alpha_i)\}^2}{\psi^2  (\alpha_i)}\sum_{j=1}^p u_{ij}^4 + 2\frac{\alpha_i^2 \psi^\prime(\alpha_i)}{\psi^2  (\alpha_i)} \;\mbox{and}\;$$
$$\sigma_{ij} = \lim_{p\rightarrow \infty}(\gamma_4-3)\frac{\alpha_i\alpha_j \psi^\prime(\alpha_i)\psi^\prime(\alpha_j)}{\psi(\alpha_i)\psi(\alpha_j)} \sum_{k=1}^p u_{ik}^2 u_{jk}^2. $$

We will show that the sample spiked eigenvalues $\lambda_i$ ($i=1,\cdots, K$) of ${\bf S}_n$ are associated with a random quadratic form given
by the following equation (see the details given in the proof of Theorem \ref{Th22}):
\begin{equation}
\det \{\bLambda_S^{-1}-\bU_1\bX(\lambda_i \bI-\bX^\intercal \bSigma_{1P} \bX)^{-1}\bX^\intercal \bU_1^\intercal\} = 0.
\end{equation}
Thus, our results rely on 
a new technique tool, a CLT for a type of random quadratic forms.  The result in Theorem \ref{Th21} is crucial to removing the block diagonal structure of the population covariance matrices
(hence the proof of Theorems \ref{Th22} and \ref{Th23} below). It can be of independent interest.

\begin{Th}\label{Th21}
Suppose that Assumptions 1-3 hold. 
Moreover, suppose that the non-random orthogonal unit vectors $\bw_1$ and $\bw_2$ satisfy $\bw_1^{\intercal}\bU_2 = \bw_2^{\intercal}\bU_2 = 0$ and $\bw_1^{\intercal}\bw_2=0$, and $\alpha$ satisfies $\psi^{\prime}(\alpha) > 0$.  Then
\begin{equation}\label{324hi} \frac{\sqrt{n}}{\tilde{\sigma}_1}\Bigg (\bw_1^{\intercal}\bX(\bI-\frac{1}{\psi_n(\alpha)}\bX^{\intercal}\bSigma_{1P}\bX)^{-1}\bX^{\intercal}\bw_1-\frac{\psi_n(\alpha)}{\alpha}\Bigg)\overset{D}\rightarrow N(0,1)\end{equation}
and
\begin{equation}\label{ah72h} \frac{\sqrt{n}}{\tilde{\sigma}_{12}} \bw_1^{\intercal}\bX(\bI-\frac{1}{\psi_n(\alpha)}\bX^{\intercal}\bSigma_{1P}\bX)^{-1}\bX^{\intercal}\bw_2\overset{D}\rightarrow N(0,1)\end{equation}
where $\tilde{\sigma}_1^2 =\psi^2(\alpha)\{ (\gamma_4-3) \sum_{i=1}^p w_{1i}^4 + {2}/{\psi^\prime(\alpha)}\}/\alpha^2$,
$\tilde{\sigma}_{12}^2=\{(\gamma_4-3)\psi^2(\alpha)\}\sum_{i=1}^p w_{1i}^2w_{2i}^2/{\alpha^2}$  and  $w_{ij}$ is the $j$-th element of $\bw_i, i =1, 2$.
\end{Th}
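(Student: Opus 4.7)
The plan is to prove both (\ref{324hi}) and (\ref{ah72h}) by a martingale-difference decomposition over the columns of $\bX$, combined with leave-one-column-out resolvent identities for the non-spike companion matrix $\tilde{\bS}_n := \bX^{\intercal}\bSigma_{1P}\bX$. Throughout I would exploit $\bw_1^{\intercal}\bU_2 = \bw_2^{\intercal}\bU_2 = 0$: this ensures $\bw_j^{\intercal}\bSigma_{1P} = 0$, so $\bw_j$ is orthogonal to the bulk directions of $\tilde{\bS}_n$, while each scalar $\bw_j^{\intercal}\bX_{\cdot k}$ remains centered with variance $1/n$. Setting $\lambda = \psi_n(\alpha)$ and $\bR(\lambda) = (\lambda \bI - \tilde{\bS}_n)^{-1}$, rewrite the statistic as $Q_n := \lambda\,\bw_1^{\intercal}\bX\bR(\lambda)\bX^{\intercal}\bw_1$.

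For the centering I would use the identity $\underline{m}(\psi(\alpha)) = -1/\alpha$, where $\underline{m}$ is the companion Stieltjes transform associated with the limiting Marchenko--Pastur-type spectrum of $\tilde{\bS}_n$ (this identity follows by substituting $\underline{m} = -1/\alpha$ into the Marchenko--Pastur fixed-point equation and comparing with~(\ref{43afb})). Together with standard deterministic-equivalent bounds this gives $\frac{1}{n}\text{tr}\,\bR(\lambda) \to 1/\alpha$ and hence $E[Q_n] \to \psi(\alpha)/\alpha$; the bilinear form concentrates around its trace value precisely because $\bw_1\perp\bU_2$ and $\|\bw_1\|=1$. To establish the CLT, let $\mathcal{F}_k = \sigma(\bX_{\cdot 1},\dots,\bX_{\cdot k})$ and $E_k(\cdot) = E[\,\cdot\mid\mathcal{F}_k]$, and decompose
\[
Q_n - E[Q_n] = \sum_{k=1}^{n} (E_k - E_{k-1})Q_n =: \sum_{k=1}^{n} D_k.
\]
Block inversion removing row/column $k$ expresses $\bR(\lambda)$ as $\bR^{(k)}(\lambda) := (\lambda\bI_{n-1} - \tilde{\bS}_n^{(k)})^{-1}$ plus a rank-$O(1)$ perturbation driven by $\bX_{\cdot k}$; substituting this into $Q_n$ writes $D_k$ as a quadratic-in-$\bX_{\cdot k}$ expression with $\mathcal{F}_{k-1}$-measurable coefficients (after taking $E_k$). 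The martingale CLT (Billingsley, Thm.~35.12) then requires (i) a conditional Lindeberg condition, which follows from Assumption~1 together with standard quadratic-form concentration such as Lemma B.26 of Bai and Silverstein (2010), and (ii) the variance limit $n\sum_{k=1}^{n} E[D_k^2 \mid \mathcal{F}_{k-1}] \to \tilde\sigma_1^2$ in probability.

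The variance step is where the two pieces of $\tilde\sigma_1^2 = \psi^2(\alpha)\{(\gamma_4 - 3)\sum w_{1i}^4 + 2/\psi'(\alpha)\}/\alpha^2$ emerge. After plugging the leave-one-out expansion into $D_k$, each conditional second moment is evaluated via the quadratic-form identity $\text{Var}(\bX_{\cdot k}^{\intercal}\bM\bX_{\cdot k}) = (\gamma_4 - 3)\sum_i M_{ii}^2/n^2 + 2\,\text{tr}(\bM^2)/n^2$, applied with $\bM$ built from $\bw_1\bw_1^{\intercal}$ and $\bR^{(k)}$: the diagonal (fourth-cumulant) piece furnishes the $(\gamma_4 - 3)\sum_i w_{1i}^4$ coefficient, while the trace piece, summed over $k$, produces $\psi^2(\alpha)\,\underline{m}'(\psi(\alpha))$. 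Differentiating $\underline{m}(\psi(\alpha)) = -1/\alpha$ in $\alpha$ yields $\underline{m}'(\psi(\alpha))\,\psi'(\alpha) = 1/\alpha^2$, producing exactly $2\psi^2(\alpha)/\{\alpha^2\psi'(\alpha)\}$. The off-diagonal statement~(\ref{ah72h}) runs through the same scheme; the leading mean vanishes because $\bw_1\perp\bw_2$ gives $E[(\bw_1^{\intercal}\bX_{\cdot k})(\bw_2^{\intercal}\bX_{\cdot k})] = 0$, and only the $(\gamma_4 - 3)\sum_i w_{1i}^2 w_{2i}^2$ cross fourth-cumulant term survives.

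The hardest part is this variance computation in the non-Gaussian regime: because $\bw_j^{\intercal}\bX_{\cdot k}$ and $\bU_2^{\intercal}\bX_{\cdot k}$ share the same underlying iid entries (so they are uncorrelated but not independent unless $q_{ij}$ is Gaussian), the quadratic form $Q_n$ does not cleanly decouple into spike and bulk parts, and one must track joint fourth moments explicitly. This is precisely how the $(\gamma_4 - 3)\sum w_{1i}^4$ correction enters and what allows us to dispense with the eigenvector or moment-matching assumptions of Jiang and Bai (2019). In the Gaussian case, rotational invariance makes $\bU_1^{\intercal}\bX$ and $\bU_2^{\intercal}\bX$ exactly independent and the argument would simplify substantially.
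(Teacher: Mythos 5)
Your overall scheme is the same as the paper's: truncate the fluctuation analysis to a martingale-difference decomposition over the columns of $\bX$, use leave-one-column-out resolvent identities, verify the conditional Lindeberg condition, and obtain the two pieces of $\tilde\sigma_1^2$ from the quadratic-form variance identity, with the $(\gamma_4-3)\sum_i w_{1i}^4$ term from the diagonal part and the Gaussian part identified through $\underline{m}'(\psi(\alpha))=1/\{\alpha^2\psi'(\alpha)\}$; the cross-statement \eqref{ah72h} is handled the same way in both. Two of your steps, however, are only asserted where real work is required. First, and most seriously, the centering: you argue $E[Q_n]\to\psi(\alpha)/\alpha$, but the theorem normalizes by $\sqrt n$ and centers at $\psi_n(\alpha)/\alpha$, so you must prove the bias estimate $\sqrt n\,(E[Q_n]-\psi_n(\alpha)/\alpha)\to 0$. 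Your remark that the bilinear form "concentrates around its trace value because $\bw_1\perp\bU_2$" does not give this: for non-Gaussian entries $\bw_1^\intercal\bX$ is not independent of $\bX^\intercal\bSigma_{1P}\bX$, and the exact identity $E\,\bw_1^\intercal\bX^0(\bA^0)^{-1}(\bX^0)^\intercal\bw_1=\tfrac1n E\,\mathrm{tr}(\bA^0)^{-1}$ holds only for a Gaussian $\bX^0$. The paper closes this gap by a column-by-column interpolation between $\bX$ and a Gaussian copy to show the expectations differ by $o(n^{-1/2})$, and then invokes the $\sqrt n$-rate result $\sup_z\sqrt n\,|E m_{F}(z)-\underline m_n(z)|\to 0$ of Bai and Silverstein (2004) for the Gaussian model; without some argument of this type the CLT could carry a non-negligible bias, especially since $\psi_n(\alpha)-\psi(\alpha)$ itself need not be $o(n^{-1/2})$.

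Second, your identification of the trace contribution with $\psi^2(\alpha)\underline m'(\psi(\alpha))$ is correct in value but is exactly the hard computation, not a consequence of plugging in a deterministic equivalent: the conditional expectations $E_k\bB_k$ retain the randomness of columns $1,\dots,k$, and the factor $1/\psi'(\alpha)$ emerges only after the decoupling argument with independent copies (the paper follows (4.7)--(4.22) of Bai, Miao and Pan (2007), producing the quantity $d=1-\psi'(\alpha)$ and the integral giving $2/(1-d)$). You should also note that under a bare fourth-moment assumption one needs truncation of the entries at $\eta_n n^{1/4}$ (and a high-probability event controlling invertibility of the leave-one-out resolvents and boundedness of the $\alpha_k$-type quantities) before the moment bounds used in the Lindeberg and negligibility estimates are available; this is routine but cannot be skipped.
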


We are ready to provide the central limit theorem for the sample spiked eigenvalues. We consider the case when the eigenvalues of $\bLambda_S$ are all simple first. 
\begin{Th}\label{Th22} Let $\theta_i = \psi_n(\alpha_i),i=1,\cdots,K$. Suppose that Assumptions 1-5 hold. Then for all $i = 1,2,\cdots, K$,
\begin{equation}\label{21thm}\sqrt{n}\frac{\lambda_i-\theta_i}{\theta_i}\stackrel{D}\rightarrow N ( 0, \sigma_i^2).\end{equation}
 Moreover, for any fixed $1\leq r \leq K$, \begin{equation}\label{22thm}
\Big( \sqrt{n}\frac{\lambda_1-\theta_1}{\theta_1} , \cdots,  \sqrt{n}\frac{\lambda_r-\theta_r}{\theta_r} \Big) \stackrel{D}\rightarrow N ( 0,  \bSigma_{\lambda r}),
\end{equation}
where $ \bSigma_{\lambda r} =(\bSigma_{\lambda r, ij}) $ with \begin{equation*} \bSigma_{\lambda r, ij}=
\left \{
\begin{array}{rrrrl}
 \sigma_i^2, \; i=j \\
\sigma_{ij}, \. i\neq j \\
\end{array}
\right..
\end{equation*}
\end{Th}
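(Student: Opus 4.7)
The starting point is the determinantal identity from the excerpt, namely $\det(\bLambda_S^{-1}-M(\lambda_i))=0$ with the $K\times K$ matrix $M(\lambda):=\lambda^{-1}\bU_1^{\intercal}\bX(\bI-\bX^{\intercal}\bSigma_{1P}\bX/\lambda)^{-1}\bX^{\intercal}\bU_1$; this reduces the $p$-dimensional eigenvalue problem to $K$ equations in random quadratic forms of exactly the type handled by Theorem \ref{Th21}. Because $\bU=(\bU_1,\bU_2)$ is orthogonal, the columns $\bu_k$ of $\bU_1$ are mutually orthogonal unit vectors each orthogonal to the range of $\bU_2$, so the hypotheses of Theorem \ref{Th21} are satisfied by every pair $(\bu_k,\bu_l)$. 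A crucial feature of (\ref{324hi}) is that its deterministic centering $\psi_n(\alpha)/\alpha$ depends only on the scalar $\alpha$ and not on $\bw_1$; applying it with $\alpha=\alpha_i$ and $\bw_1=\bu_k$ therefore gives $M(\theta_i)_{kk}=1/\alpha_i+O_P(n^{-1/2})$ for every $k$ (including $k\neq i$), while (\ref{ah72h}) gives $M(\theta_i)_{kl}=O_P(n^{-1/2})$ for $k\neq l$. Consequently $\bLambda_S^{-1}-M(\theta_i)$ has deterministic diagonal limit $1/\alpha_k-1/\alpha_i$, which vanishes at $k=i$ and is nonzero elsewhere by the simple-spike Assumption 4, plus $O_P(n^{-1/2})$ entrywise fluctuations.

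Expanding $\det(\bLambda_S^{-1}-M(\lambda_i))$ as a sum over permutations, every non-identity permutation contains at least two off-diagonal factors and is $O_P(n^{-1})$, so
\[
\bigl(1/\alpha_i-M(\lambda_i)_{ii}\bigr)\prod_{k\neq i}\bigl(1/\alpha_k-1/\alpha_i\bigr)=O_P(n^{-1}),
\]
whence $M(\lambda_i)_{ii}=1/\alpha_i+O_P(n^{-1})$. Combined with $M(\theta_i)_{ii}-1/\alpha_i=O_P(n^{-1/2})$, a first-order Taylor expansion of $M(\cdot)_{ii}$ around $\theta_i$ yields $\lambda_i-\theta_i=O_P(n^{-1/2})$ and the linearisation
\[
\lambda_i-\theta_i=\frac{1/\alpha_i-M(\theta_i)_{ii}}{M'(\theta_i)_{ii}}+o_P(n^{-1/2}).
\]
Let $m(\lambda)$ denote the deterministic limit of $M(\cdot)_{ii}$; the identity $m(\psi(\alpha))=1/\alpha$ and implicit differentiation give $m'(\psi(\alpha_i))=-1/(\alpha_i^2\psi'(\alpha_i))$. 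Inserting this and the marginal CLT (\ref{324hi}) with $\bw_1=\bu_i$, a short computation reduces the variance of $\sqrt{n}(\lambda_i-\theta_i)/\theta_i$ to $\alpha_i^4(\psi'(\alpha_i))^2\tilde{\sigma}_1^2/\psi^4(\alpha_i)$, which simplifies algebraically to $\sigma_i^2$ and establishes (\ref{21thm}).

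For the joint statement (\ref{22thm}) the same linearisation reduces matters to the joint asymptotic normality of the $r$-tuple $\bigl(\bu_i^{\intercal}\bX(\bI-\bX^{\intercal}\bSigma_{1P}\bX/\psi_n(\alpha_i))^{-1}\bX^{\intercal}\bu_i\bigr)_{i=1}^{r}$. By the Cram\'er--Wold device it suffices to treat an arbitrary linear combination, to which the martingale-CLT machinery underlying Theorem \ref{Th21} applies verbatim. The genuinely new ingredient is the cross-covariance for $i\neq j$: since $\bu_i,\bu_j$ and the columns of $\bU_2$ are mutually orthogonal, the vectors $\bu_i^{\intercal}\bX,\bu_j^{\intercal}\bX,\bU_2^{\intercal}\bX$ are uncorrelated, and in the Gaussian case the conditional-on-$\bU_2^{\intercal}\bX$ covariance of the two quadratic forms vanishes, so the Gaussian trace term drops out; only the fourth-cumulant contribution $(\gamma_4-3)\sum_k u_{ik}^2u_{jk}^2$ survives, and the common Taylor scaling factor $\alpha_i\alpha_j\psi'(\alpha_i)\psi'(\alpha_j)/(\psi(\alpha_i)\psi(\alpha_j))$ reproduces $\sigma_{ij}$. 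The principal obstacle is precisely this joint martingale-CLT extension together with uniform control of the Taylor remainder on the $O_P(n^{-1/2})$ neighbourhood of each $\theta_i$; both rely on resolvent estimates for $(\bI-\bX^{\intercal}\bSigma_{1P}\bX/\lambda)^{-1}$ already present in the proof of Theorem \ref{Th21}.
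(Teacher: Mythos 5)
Your proposal is correct and follows essentially the same route as the paper's own proof: the determinant equation is reduced, via Theorem \ref{Th21} applied entrywise to $\bU_1^{\intercal}\bX(\lambda\bI-\bX^{\intercal}\bSigma_{1P}\bX)^{-1}\bX^{\intercal}\bU_1$ and a Leibniz-type expansion, to the linearisation $\sqrt{n}\,(\lambda_i-\theta_i)/\theta_i=-\sqrt{n}\hat{S}_n/(\theta_i R_i)$ with $R_i\stackrel{i.p.}\rightarrow 1/(\alpha_i^2\psi^{\prime}(\alpha_i))$, and the joint law is obtained by Cram\'er--Wold with the same martingale computation, in which the Gaussian (trace) part of the cross-covariance vanishes by the orthogonality of $\bu_i$, $\bu_j$ and the columns of $\bU_2$, leaving only the $(\gamma_4-3)\sum_k u_{ik}^2u_{jk}^2$ contribution. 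The steps you defer (the a priori consistency $\lambda_i\rightarrow\psi(\alpha_i)$ that starts the bootstrap, and the concentration of the derivative term, proved in the paper by a variance bound plus Gaussian interpolation for $\bu_i^{\intercal}\bX\bA^{-2}\bX^{\intercal}\bu_i$) are exactly those the paper supplies, consistent with your outline.
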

\begin{Rem} Since the convergence rate of $c_n\rightarrow c$ and $ H_n \rightarrow H$ can be arbitrarily slow, $\theta_i = \psi_n(\alpha_i)$ is used in the CLT,  rather than $\psi(\alpha_i)$, which is the almost sure limit of $\lambda_i$.
\end{Rem}
\begin{Rem}
If we only consider the asymptotic distribution for an individual sample spiked eigenvalue,  Assumption 5 is not needed, since $\sqrt{n}(\lambda_i - \theta_i)/\theta_i$ can be normalized by
$ \big[(\gamma_4-3){\alpha_i^2 \{\psi^\prime(\alpha_i)\}^2}\sum_{j=1}^p u_{ij}^4+ 2{\alpha_i^2 \psi^\prime(\alpha_i)}\big]/{\psi^2  (\alpha_i)}$.
\end{Rem}
\begin{Rem}
Compared with earlier asymptotic results on spiked eigenvalues of sample covariance matrices obtained by \cite{bai2008central,bai2012} and \cite{li2019asymptotic}, we do not assume a block diagonal structure on population covariance matrices. Moreover \cite{bai2008central,bai2012} and \cite{jiang2019} did not consider the joint distribution of the different leading sample spiked eigenvalues corresponding to the different population eigenvalues. Instead they considered the joint distribution of the different leading sample spiked eigenvalues corresponding to the same population eigenvalues.
\end{Rem}


 We next consider the case when the multiplicity of the spiked eigenvalues of $\bLambda_S$ are more than one.

\noindent\textbf{Assumption 6.} Suppose that the population covariance matrix $\bSigma$ has  $K$ spiked eigenvalues:
$\alpha_1 > \cdots > \alpha_\mathcal{L}$ with respective multiplicities $m_1, \cdots, m_\mathcal{L}$, laying outside the support of  $H$, and satisfying $\psi^{\prime} (\alpha_k)>0$ for $1 \leq k\leq \mathcal{L}$. Furthermore, we assume that the following limits exist for $i=1, \cdots, \mathcal{L}$:
\begin{eqnarray}\label{cocar}\begin{aligned}&g(r_i, k_1,l_1,k_2,l_2) =\lim_{p\rightarrow \infty}  (\gamma_4-3)\frac{\alpha_i^2 \{\psi^\prime(\alpha_i)\}^2}{\psi^2  (\alpha_i)} \sum_{j=1}^p u_{r_i+k_1,j} u_{r_i+l_1,j} u_{r_i+k_2,j} u_{r_i+l_2,j}\\&\relphantom{EE}+ \frac{\alpha_i^2 \psi^\prime(\alpha_i)}{\psi^2  (\alpha_i)}\{(\bu_{r_i+k_1}^\intercal \bu_{r_i+k_2})(\bu_{r_i+l_1}^\intercal \bu_{r_i+l_2})+(\bu_{r_i+k_1}^\intercal \bu_{r_i+l_2})(\bu_{r_i+k_2}^\intercal \bu_{r_i+l_1})\},\end{aligned}\end{eqnarray}
where $r_i:=\sum_{j=0}^{i-1} m_j$,  $m_0=0$ and $1\leq k_1, l_1, k_2, l_2 \leq m_i$.

\begin{Th}\label{Th23}  Suppose that  Assumptions 1, 2, 3 and 6 hold.
Then
\begin{equation}
\Big(\sqrt{n}\frac{\lambda_{r_i+1}-\theta_i}{\theta_i}, \cdots, \sqrt{n}\frac{\lambda_{r_i+m_i}-\theta_i}{\theta_i} \Big)
\end{equation}
converges weakly to the joint distribution of the eigenvalues of $m_i\times m_i$ Gaussian random matrix $\mathcal{\bG}_i$ with $E\mathcal{\bG}_i=0$ and covariance of $(\mathcal{\bG}_i)_{k_1,l_1}$ and $(\mathcal{\bG}_i)_{k_2,l_2}$  being $g(r_i,k_1,l_1,k_2,l_2)$ defined in \eqref{cocar}.
\end{Th}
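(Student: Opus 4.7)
The plan is to extend the determinantal-equation approach used for Theorem~\ref{Th22} to the case of spikes with arbitrary multiplicities. The sample spikes satisfy
$$\det\{\bLambda_S^{-1} - \bU_1^{\intercal}\bX(\lambda \bI-\bX^{\intercal}\bSigma_{1P}\bX)^{-1}\bX^{\intercal}\bU_1\} = 0,$$
and $\bLambda_S^{-1}$ is block diagonal with $i$-th block $\alpha_i^{-1}\bI_{m_i}$. Since Theorem~\ref{Th22} and the results of Bai--Yao (2012) (whose almost-sure parts remain valid after dropping the block-diagonal structure, as already explained in the paper) imply that the sample eigenvalues $\lambda_{r_i+1},\ldots,\lambda_{r_i+m_i}$ converge a.s.\ to $\psi(\alpha_i)$ and are separated from the other spike clusters, it suffices to study the $m_i\times m_i$ local block
$$\bM_i(\lambda) := \alpha_i^{-1}\bI_{m_i} - \bU_{1,i}^{\intercal}\bX(\lambda \bI-\bX^{\intercal}\bSigma_{1P}\bX)^{-1}\bX^{\intercal}\bU_{1,i},$$
where $\bU_{1,i}$ consists of the columns of $\bU_1$ indexed by $r_i+1,\ldots,r_i+m_i$, and to locate its zeros in a $1/\sqrt{n}$-neighborhood of $\theta_i$.

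Next I would establish a matrix-valued CLT for
$$\sqrt{n}\Big(\bU_{1,i}^{\intercal}\bX(\bI - \psi_n(\alpha_i)^{-1}\bX^{\intercal}\bSigma_{1P}\bX)^{-1}\bX^{\intercal}\bU_{1,i} - (\psi_n(\alpha_i)/\alpha_i)\bI_{m_i}\Big).$$
Each entry is a quadratic form of exactly the type treated in Theorem~\ref{Th21}, so by the Cram\'er--Wold device combined with the truncation, centering and martingale decomposition used there, the $m_i^2$ entries converge jointly to a centered Gaussian matrix $\mathcal{\bG}_i$. The covariance of the $(k_1,l_1)$ and $(k_2,l_2)$ entries is exactly $g(r_i,k_1,l_1,k_2,l_2)$: the first term of \eqref{cocar} captures the fourth-cumulant contribution $(\gamma_4-3)\sum_j u_{r_i+k_1,j}u_{r_i+l_1,j}u_{r_i+k_2,j}u_{r_i+l_2,j}$, and the second term arises from the Gaussian-type Wick pairings of the quadratic forms, which produce the two inner-product cross-terms $(\bu_{r_i+k_1}^{\intercal}\bu_{r_i+k_2})(\bu_{r_i+l_1}^{\intercal}\bu_{r_i+l_2})$ and $(\bu_{r_i+k_1}^{\intercal}\bu_{r_i+l_2})(\bu_{r_i+k_2}^{\intercal}\bu_{r_i+l_1})$.

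Finally I would convert this matrix CLT into the claimed eigenvalue convergence. Setting $\lambda = \theta_i(1 + x/\sqrt{n})$ and Taylor-expanding the resolvent $(\lambda\bI - \bX^{\intercal}\bSigma_{1P}\bX)^{-1}$ in $x/\sqrt{n}$, one obtains
$$\sqrt{n}\,\bM_i\!\left(\theta_i(1+x/\sqrt{n})\right) \;=\; x\,\frac{\psi'(\alpha_i)}{\alpha_i\psi(\alpha_i)}\,\bI_{m_i} \;-\; \widetilde{\mathcal{\bG}}_{i,n} \;+\; o_P(1),$$
where $\widetilde{\mathcal{\bG}}_{i,n}$ is (up to the explicit scalar normalization absorbed into the definition of $\mathcal{\bG}_i$) the matrix whose joint CLT was just established. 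Thus the $m_i$ zeros of $\det \bM_i(\lambda) = 0$ near $\theta_i$ correspond bijectively to the eigenvalues of a symmetric Gaussian matrix with limiting law $\mathcal{\bG}_i$, and the continuous mapping theorem applied to the eigenvalue map on symmetric matrices yields the claimed joint weak convergence.

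The main obstacle is the localization step: one must show that the off-diagonal (cross-spike) blocks of $\bU_1^{\intercal}\bX(\lambda\bI - \bX^{\intercal}\bSigma_{1P}\bX)^{-1}\bX^{\intercal}\bU_1$ are negligible uniformly for $\lambda$ in a shrinking neighborhood of $\theta_i$, so that the $m_i\times m_i$ block $\bM_i$ really does govern the local zeros of the full determinantal equation. This requires anisotropic-style resolvent estimates controlling $\bw_1^{\intercal}\bX(\cdot)^{-1}\bX^{\intercal}\bw_2$ for non-overlapping eigenvector groups (the off-diagonal analogue of \eqref{ah72h}) with quantitative bounds rather than just a single-point CLT, together with a uniform-in-$\lambda$ argument via a stochastic continuity / $\epsilon$-net estimate.
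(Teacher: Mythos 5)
Your outline follows the paper's own strategy almost exactly: the same determinantal equation, the same reduction to the $m_i\times m_i$ block for the $i$-th spike group, and the same matrix CLT for $\sqrt n\,\{\alpha_i^{-1}\bI_{m_i}-\bU_{1i}^\intercal\bX\bB^{-1}(\theta_i)\bX^\intercal\bU_{1i}\}$ obtained entrywise from Theorem \ref{Th21} together with the Cram\'er--Wold device, with covariance \eqref{cocar}. The differences lie in the end-game, and there your proposal is heavier than necessary in one place and lighter than necessary in another. First, the localization you single out as the main obstacle does not require uniform-in-$\lambda$ anisotropic resolvent estimates or an $\epsilon$-net: the paper evaluates the resolvent at the random point $\lambda_j$ and uses the identity \eqref{4bndg}, so each cross-spike entry is $O_p(n^{-1/2})$ (single-point CLT, the cross-term statement \eqref{ah72h}) plus $(\lambda_j-\theta_i)/\theta_i$ times a factor that is bounded because $\lambda_j\to\psi(\alpha_i)$ a.s.\ and, by exact separation as in \cite{bai1998no}, no eigenvalue of $\bX^\intercal\bSigma_{1P}\bX$ approaches $\psi(\alpha_i)$; only pointwise information at $\theta_i$ is needed. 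Second, the step you delegate to the continuous mapping theorem---passing from the block CLT to the roots of the \emph{full} determinantal equation---is exactly where the block--bulk interaction in the determinant must be controlled; the paper does this by multiplying the first $m_i$ rows and columns by $n^{1/4}$ and invoking the Skorokhod strong representation argument of \cite{bai2008central} (pp.\ 464--465), whereas applying continuous mapping to the isolated block alone re-encounters the very localization issue you flag. Finally, a small computational slip: the coefficient of $x$ in your expansion should be the limit of $\theta_iR_1$, namely $\psi(\alpha_i)/\{\alpha_i^2\psi^\prime(\alpha_i)\}$, not $\psi^\prime(\alpha_i)/\{\alpha_i\psi(\alpha_i)\}$; since $\mathcal{\bG}_i$ in \eqref{cocar} carries the fixed normalization $\alpha_i^2\psi^\prime(\alpha_i)/\psi(\alpha_i)$, this constant cannot simply be "absorbed" and must be tracked for the stated covariance to come out correctly.
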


\begin{Rem} This result is similar to those in \cite{bai2008central,bai2012}, Theorem 3.1 and Corollary 3.1 in \cite{jiang2019}. However we neither need a block diagonal population covariance structure as in \cite{bai2008central,bai2012} nor the maximum absolute
value of the eigenvector of the corresponding spikes tending to zero (i.e., $\max\limits_{1\leq i\leq K,1\leq j\leq K}|u_{ij}|\rightarrow 0$) nor requiring the match of the 4th moment with Gaussian distribution (i.e.,$\gamma_4=3$) as in  \cite{jiang2019}. The assumption [D] about the population eigenvectors in \cite{jiang2019} excludes all the diagonal population covariance matrices when $\max\limits_{1\leq i\leq K,1\leq j\leq K}|u_{ij}|\rightarrow 0$. Under their assumption [D] we have 
\begin{equation}
g(r_i, k_1,l_1,k_2,l_2) =
\begin{cases}
  {2\alpha_i^2 \psi^\prime(\alpha_i)}/{\psi^2  (\alpha_i)}  \hspace{0.5cm}  k_1=k_2=l_1=l_2\\
 {\alpha_i^2 \psi^\prime(\alpha_i)}/{\psi^2  (\alpha_i)} \hspace{0.5cm} k_1=k_2 \mbox{ and } l_1=l_2 \mbox{ or } k_1=l_2 \mbox{ and } l_1=k_2\\
0 \,  \hspace{0.5cm} \mbox{otherwise},
\end{cases}
\end{equation}
which is consistent with theirs.  
\end{Rem}

\subsection{Asymptotic joint distribution of sample spiked eigenvalues and linear spectral statistics}

We now turn to the asymptotic joint distribution of sample spiked eigenvalues and linear spectral statistics of sample covariance matrices. To this end, define
\begin{eqnarray}L_p(\varphi) = \sum_{i=1}^p \varphi(\lambda_i)- p\int \varphi(x)dF^{c_n,H_n}(x),
\end{eqnarray}
where $\varphi(x)$ is an analytic function on an open interval containing
$$[\liminf_n \lambda_{ \min}^\bSigma I_{(0,1)}(c)(1-\sqrt{c})^2,\limsup_n \lambda_{ \max}^\bSigma(1+\sqrt{c})^2].$$
Here $F^{c_n,H_n}$ is obtained from $F^{c,H}$ with $c$ and $H$ being replaced by $c_n$ and $H_n$ respectively. 


Let $F^{\bS_n}$ be the empirical spectral distribution (ESD) of the sample covariance matrix $\bS_n$. It is well known that $F^{\bS_n}$ under some mild assumptions converges weakly to a non-random distribution $F^{c,H}$ with probability one, whose Stieltjes transform is the unique solution in $\mathbb{C}^+$ to the equation  \begin{equation}\label{88am9}
m = \int \frac{1}{t(1-c-czm)-z}dH(t),  \mbox{for z} \in \mathbb{C}^+.
\end{equation}
 Also, the ESD of $\underline{\bS}_n = \bX^\intercal \bGamma^\intercal \bGamma \bX$ has an almost sure limit whose Stieltjes transform satisfies \begin{equation}\label{23ato}  z = -\frac{1}{\underline{m}}+c\int \frac{t}{1+t \underline{m}}dH(t).\end{equation}
\noindent\textbf{Assumption 7.} Suppose that \begin{equation}\frac{1}{p}\sum_{i=1}^p \be_i^\intercal \bGamma^\intercal(\underline{m}(z_1)\bGamma\bGamma^\intercal + \bI)^{-1}\bGamma \be_i \be_i^\intercal \bGamma^\intercal(\underline{m}(z_2)\bGamma\bGamma^\intercal + \bI)^{-1}\bGamma \be_i \rightarrow h_1(z_1, z_2)
\end{equation}
and\begin{equation}\frac{1}{p}\sum_{i=1}^p  \be_i^\intercal \bGamma^\intercal(\underline{m}(z)\bGamma\bGamma^\intercal + \bI)^{-2}\bGamma \be_i \be_i^\intercal \bGamma^\intercal(\underline{m}(z)\bGamma\bGamma^\intercal + \bI)^{-1}\bGamma \be_i \rightarrow h_2(z).
\end{equation}




\begin{Th}\label{Th2.4} Suppose that  Assumptions 1-5 and 7 hold, then for $1\leq i \leq K$,  $ \sqrt n(\lambda_i-\theta_i)/{\theta_i}$ and $L_p(\varphi)$ are asymptotically independent.
\end{Th}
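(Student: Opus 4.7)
The plan is to establish joint asymptotic normality of $\sqrt{n}(\lambda_i-\theta_i)/\theta_i$ together with $L_p(\varphi)$ with a block-diagonal limiting covariance, which forces their asymptotic independence in the Gaussian limit. The first step is to replace $\sqrt{n}(\lambda_i-\theta_i)/\theta_i$ by a tractable random quadratic form. From the derivation of Theorem~\ref{Th22}, any sample spike $\lambda_i$ solves
\[
\det\Bigl\{\bLambda_S^{-1}-\bU_1\bX(\lambda_i\bI-\bX^\intercal\bSigma_{1P}\bX)^{-1}\bX^\intercal\bU_1^\intercal\Bigr\}=0,
\]
so a Taylor expansion of the determinant at $\lambda_i=\theta_i$ combined with Theorem~\ref{Th21} gives, up to $o_P(1)$,
\[
\sqrt{n}\,\frac{\lambda_i-\theta_i}{\theta_i}\;=\;-\frac{\alpha_i}{\psi'(\alpha_i)\psi_n(\alpha_i)}\,\sqrt{n}\Bigl(\bu_i^\intercal\bX\bigl(\bI-\psi_n(\alpha_i)^{-1}\bX^\intercal\bSigma_{1P}\bX\bigr)^{-1}\bX^\intercal\bu_i-\tfrac{\psi_n(\alpha_i)}{\alpha_i}\Bigr)+o_P(1).
\]
Call the quadratic form on the right $Q_i$. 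The problem is thereby reduced to showing that $Q_i$ and $L_p(\varphi)$ are asymptotically independent.

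Next, I would represent the LSS via a Cauchy integral, $L_p(\varphi)=-\frac{1}{2\pi i}\oint_{\mathcal{C}} \varphi(z)\,M_n(z)\,dz$, where $M_n(z)=\mathrm{tr}(\bS_n-z\bI)^{-1}-p\int(t-z)^{-1}dF^{c_n,H_n}(t)$ and $\mathcal{C}$ is a rectangular contour enclosing the bulk support of $F^{c,H}$ but \emph{not} the point $\theta_i$ (possible since $\psi'(\alpha_i)>0$ places $\theta_i$ strictly outside the bulk). The CLT for $M_n(\cdot)$ as a process in $z$, with covariance expressible through $h_1,h_2$, is available under Assumptions~1--3 and~7 from the existing Bai--Silverstein / Pan--Zhou framework, and Theorem~\ref{Th21} gives the marginal CLT for $Q_i$. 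Hence the task is the joint CLT plus vanishing of the cross-covariance $\Gamma_n(z):=\mathrm{Cov}(Q_i,M_n(z))$ uniformly on $\mathcal{C}$. Joint tightness and Gaussianity of linear combinations $sQ_i+\int_{\mathcal{C}} t(z)M_n(z)dz$ follow by the same cumulant-truncation / martingale CLT method already used to prove Theorems~\ref{Th21} and the LSS CLT, applied to a single linear functional.

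The core calculation is $\Gamma_n(z)\to 0$ uniformly on $\mathcal{C}$. Writing out the cross-covariance via the cumulant expansion for independent entries of $\bX$ splits $\Gamma_n(z)$ into a Gaussian (second-cumulant) contribution, proportional to sums of the form $\bu_i^\intercal\,\mathcal{R}(z)\,\bSigma_{1P}^{k}\,\bu_i$ with $\mathcal{R}(z)$ a resolvent built from $\bSigma_{1P}$ and the contour parameter, and a fourth-cumulant contribution carrying $(\gamma_4-3)\sum_j u_{ij}^2 [\mathcal{R}(z)]_{jj}[\cdots]_{jj}$. The Gaussian part vanishes by the identity $\bU_1^\intercal\bSigma_{1P}=0$: every such inner product reduces to $\bu_i^\intercal\bu_i$ times a scalar $z$-dependent function that is analytic on and inside $\mathcal{C}$ (its only potential singularity is at the spike location $\theta_i$, which lies outside $\mathcal{C}$), so after integrating $\varphi(z)$ around $\mathcal{C}$ one obtains zero. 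The fourth-cumulant part reduces, after using the deterministic equivalent for $\mathcal{R}(z)$, to a quantity of the form $(\gamma_4-3)\sum_j u_{ij}^2\,\rho(z)\,[\alpha_i^{-1}-\underline{m}(z)\alpha_i+O(n^{-1/2})]^{-1}$, whose only pole inside $\mathcal{C}$ is again at $\theta_i$ (which is outside the contour), so its contour integral against $\varphi$ also vanishes.

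The main obstacle is precisely this last analytic-structure argument: proving that the cross-covariance, viewed as a function of the contour variable $z$, is analytic in a neighborhood of the region enclosed by $\mathcal{C}$ except at the single point $z=\theta_i$, with $\theta_i$ outside $\mathcal{C}$. This requires (i) uniform control of the deterministic equivalent for $(\bI-\psi_n(\alpha_i)^{-1}\bX^\intercal\bSigma_{1P}\bX)^{-1}$ jointly with the Stieltjes transform $\underline{m}(z)$, (ii) a careful tracking of error terms so that the $o_P(1)$ remainders in the expansion of $\lambda_i-\theta_i$ do not create spurious contributions to $\Gamma_n(z)$ that are only pointwise small but not uniformly small on $\mathcal{C}$, and (iii) showing the fourth-cumulant cross terms genuinely collapse to an analytic function with only the spike pole. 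Once these three ingredients are in place, the zero cross-covariance between the Gaussian limits of $Q_i$ and $L_p(\varphi)$ — equivalently, independence — follows immediately.
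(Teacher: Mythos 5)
Your overall architecture --- replace $\sqrt n(\lambda_i-\theta_i)/\theta_i$ by the quadratic form $Q_i$ through the determinant equation, then prove a joint CLT for $Q_i$ and $M_n(z)$ and show the cross-covariance dies --- is the same as the paper's. The genuine gap is in the step you yourself flag as the main obstacle: the mechanism you propose for the vanishing of the cross term is both unestablished and, more importantly, not the mechanism that actually operates. You argue that the second- and fourth-cumulant parts of $\mathrm{Cov}(Q_i,M_n(z))$ organize into $z$-analytic functions whose only singularity sits at $\theta_i$, choose a contour excluding $\theta_i$, and attribute the cancellation to $\bU_1^{\intercal}\bSigma_{1P}=0$. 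In the paper no analyticity, pole-location, or orthogonality-with-$\bSigma_{1P}$ argument is needed: writing both statistics as martingale difference arrays (the quadratic form as $\sqrt n\sum_k a_n E_k\delta_k$ with $\delta_k=\bx_k^{\intercal}\bB_k\bx_k-\tfrac1n\operatorname{tr}\bB_k$, $\bB_k=\bD_k^{-1}\bw_1\bw_1^{\intercal}(\bD_k^{\intercal})^{-1}$, and the LSS as in Bai--Silverstein), the cross bracket per step is bounded via \eqref{543ah} and \eqref{fgn21} by $Cn^{-2}\|E_k\ddot{\bC}_k\|\,|\operatorname{tr}E_k\dot{\bB}_k|=O(n^{-2})$, precisely because $\bB_k$ is rank one with bounded trace (Von Neumann's inequality handles $\operatorname{tr}(\bB_k\bC_k)$ and the diagonal sum alike). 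Summing over $k$ and multiplying by the $\sqrt n$ normalization of the spike statistic gives $O(n^{-1/2})$, uniformly on the contour. So the limiting cross-covariance function is identically zero --- there is no nonzero analytic function whose weighted contour integral needs to vanish --- and the independence is a pure scaling phenomenon. Your plan instead hinges on analytic-structure claims (your items (i)--(iii)) that you do not prove and that describe the wrong object; as written, the central step of your proof is missing.

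Two secondary issues. First, your Cauchy representation of $L_p(\varphi)$ over a contour excluding $\theta_i$ is not exact: $L_p(\varphi)$ as defined includes the spiked sample eigenvalues, and $F^{c_n,H_n}$ carries the corresponding mass, so you must separately argue that these contributions are $o_p(1)$ (true, since $\lambda_j-\theta_j=O_p(n^{-1/2}\theta_j)$, and essentially the remark the paper makes when it drops $\lambda_1$ from the LSS part); you must also reconcile the different truncation levels used for the spike part ($\eta_n n^{1/4}$) and the LSS part ($\eta_n n^{1/2}$), as the paper does. Second, the coefficient in your linearization of $\sqrt n\chi_i$ is off: from \eqref{26hae} and \eqref{393ki} the correct factor in front of $\sqrt n\,(Q_i-\theta_i/\alpha_i)$ is $\alpha_i^2\psi^{\prime}(\alpha_i)/\theta_i^2$, not $-\alpha_i/\{\psi^{\prime}(\alpha_i)\psi_n(\alpha_i)\}$; this does not affect the independence claim, but it signals that the expansion was not carried through.
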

\begin{Rem}
This theorem implies that the joint distribution of  $ \sqrt n(\lambda_i-\theta_i)/{\theta_i}$ and $L_p(\varphi)$ is bivariate normal with asymptotic independent marginal distribution. The result can be generalized to the joint distribution of the multiple spikes and LSS easily with the spiked eigenvalues part and the LSS part still being independent. For the results regarding to  marginal distribution of LSS, one can refer to \cite{bai2004} and \cite{pan2008central}.
\end{Rem}
\begin{Rem} Compared with Theorem 3.1 in  \cite{li2019asymptotic}, we have two advantages. Firstly, we don't need the block diagonal assumption on the population covariance matrices. Secondly, our
LSS is not restricted to the trace of sample covariance matrices.
\end{Rem}
\vspace{0.5cm}

\subsection{Estimating the population eigenvectors associated with the spiked eigenvalues}

This section is to explore the estimation of the population spiked eigenvectors associated with the simple spiked eigenvalues $\alpha_1,  \cdots   \alpha_K$ involved in \eqref{21thm}.
Although many studies of the spiked eigenvectors have been carried out, most of them have not provided consistent estimators for the population eigenvectors in terms of certain norm. 
 For example, \cite{paul2007asymptotics} established the almost sure limit of $\bu_i^{\intercal} \hat{\bu}_i$ and a CLT for $\hat{\bu}_i $ for any $1\leq i \leq K$ under the assumption that $\bX$ is Gaussian and $\bGamma$ is diagonal with the nonspiked covariance being identity. 
 \cite{ding2017} further characterized the limit of $\bu_i^{\intercal} \hat{\bu}_i$ for a general spiked model. However these results are not helpful for estimating the population eigenvectors in terms of certain norm. Our following theorem provides a consistent estimator of $\sum_{k=1}^p u_{ik}^4$ inspired by the results in \cite{mestre2008}, which
 considered an estimator of $\bs^\intercal \bu_i$ where $\bs$ is any fixed vector with a bounded norm in $R^p$ when the underlying random variables are continuous  with finite eighth order moments. 


\begin{Th}\label{Th2.5} Suppose that the assumptions of Theorem \ref{Th22} hold and $\bGamma$ is symmetric, i.e. the left orthogonal matrix $\bV$ in \eqref{32nan} equals $\bU$. Let $\hat{\bu}_i$ be eigenvectors of  $\bS_n$ associated with eigenvalue $\lambda_i$ and $\hat{u}_{ik}$ be the k-th coordinate of $\hat{\bu}_i$. For $1 \leq i \leq K$, $\sum_{k=1}^p u_{ik}^4$ is consistently estimated by $\sum_{j=1}^p
\{\sum_{k=1}^p \theta_i(k)\hat{u}_{kj}^2\}^2$,
where \begin{equation}\label{ceeg1}\begin{aligned}
&\theta_i(k) =
\left \{
\begin{array}{rrrrl}
-\phi_i(k), \hspace{1cm} k \neq i \\
1+ \varrho_i(k) , \hspace{1cm}  k=i \\
\end{array}
\right.,\\
&\phi_i(k) =\frac{\lambda_i}{\lambda_k-\lambda_i} - \frac{\nu_i}{\lambda_k-\nu_i},\\
&\varrho_i(k)= \sum_{j\neq i}^p \big(\frac{\lambda_j}{\lambda_k-\lambda_j} - \frac{\nu_j}{\lambda_k-\nu_j}\big),
\end{aligned}\end{equation}
and where $\nu_1 \geq \nu_2 \geq \cdots \geq \nu_p$ are the real valued solutions to the equation in x:
\begin{equation}\label{an9k0}\frac{1}{p}\sum_{i=1}^p \frac{\lambda_i}{\lambda_i-x} = \frac{1}{c}.\end{equation}
 When $c > 1$, take $\nu_n = \cdots =\nu_p =0$. In the expressions of $\phi_i(k)$ and $\varrho_i(k)$, we use the convention that any term of form $\frac{0}{0}$ is 0.
\end{Th}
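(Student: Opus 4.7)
The plan is to follow the contour-integral strategy of \cite{mestre2008}, first producing a coordinate-wise consistent estimator of $u_{ij}^2$ and then aggregating to estimate $\sum_j u_{ij}^4$. Because $\bGamma$ is symmetric, $\bSigma = \bU\bLambda\bU^\intercal$, so $\bu_i$ is a genuine eigenvector of $\bSigma$ corresponding to $\alpha_i$. The spectral theorem provides the contour representation
\begin{equation*}
\bu_i\bu_i^\intercal = -\frac{1}{2\pi\mathrm{i}}\oint_{\mathcal{C}_i}(\bSigma - z\bI)^{-1}\,dz, \qquad u_{ij}^2 = -\frac{1}{2\pi\mathrm{i}}\oint_{\mathcal{C}_i}\be_j^\intercal(\bSigma - z\bI)^{-1}\be_j\,dz,
\end{equation*}
where $\mathcal{C}_i$ is a small positively-oriented contour in $\mathbb{C}$ enclosing only $\alpha_i$.

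To pass to sample quantities, I would invoke the Marchenko--Pastur correspondence (\ref{23ato}) together with its deterministic equivalent for bilinear forms of the sample resolvent. Changing variable $z \leftrightarrow \omega$ along the correspondence (which maps $\alpha_i$ to $\theta_i = \psi(\alpha_i)$) transforms $\mathcal{C}_i$ into a contour in the $\omega$-plane enclosing only $\lambda_i$, and the Jacobian introduces additional poles precisely at the zeros $\{\nu_k\}$ of the empirical companion Stieltjes transform $\underline{m}_n$, that is, the roots of equation (\ref{an9k0}). Applying the residue theorem to the transformed integrand then yields
\begin{equation*}
u_{ij}^2 \;\approx\; \hat{a}_j, \qquad \hat{a}_j := \sum_{k=1}^p \theta_i(k)\hat{u}_{kj}^2,
\end{equation*}
where the paired residues at $\lambda_i$ and $\nu_i$ combine into $1 + \varrho_i(i)$, while those at $\lambda_k$ and $\nu_k$ for $k\neq i$ combine into $-\phi_i(k)$, matching exactly the coefficients in (\ref{ceeg1}).

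To derive a consistent estimator of $\sum_j u_{ij}^4$, I would square the coordinate-wise estimator and sum, then use
\begin{equation*}
\Bigl|\sum_j\hat{a}_j^2 - \sum_j u_{ij}^4\Bigr| \;\leq\; \Bigl(\sum_j(\hat{a}_j - u_{ij}^2)^2\Bigr)^{1/2}\Bigl(\sum_j(\hat{a}_j + u_{ij}^2)^2\Bigr)^{1/2}.
\end{equation*}
The second factor is $O_P(1)$ (using $\sum_j u_{ij}^4 \leq 1$ and controlling $\sum_j \hat{a}_j^2$ via sample-eigenvector orthonormality), so consistency reduces to showing $\sum_j(\hat{a}_j - u_{ij}^2)^2 = o_P(1)$.

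This aggregated bound is where I expect the main obstacle to lie. A naive Frobenius estimate $\|\hat{\bP}_i - \bu_i\bu_i^\intercal\|_F \to 0$ with $\hat{\bP}_i := \sum_k\theta_i(k)\hat{\bu}_k\hat{\bu}_k^\intercal$ is too strong, since $\|\hat{\bP}_i\|_F^2 = \sum_k\theta_i(k)^2$ may grow with $p$ because $\theta_i(k) = O(1)$ persists across a bulk-sized set of indices. Instead, I would expand the error in the population eigenbasis,
\begin{equation*}
\hat{a}_j - u_{ij}^2 = \sum_{l,l'}\Bigl[\sum_k\theta_i(k)(\hat{\bu}_k^\intercal\bu_l)(\hat{\bu}_k^\intercal\bu_{l'}) - \delta_{li}\delta_{l'i}\Bigr]\,u_{lj}u_{l'j},
\end{equation*}
and apply deterministic-equivalent / local-law estimates for the bilinear forms $(\hat{\bu}_k^\intercal\bu_l)(\hat{\bu}_k^\intercal\bu_{l'})$ to control the inner bracket. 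Aggregating over $j$ then produces fourth-moment tensors of the population eigenvectors which must be carefully tracked; the delicate step is handling contributions from sample indices $k$ close to the spike index $i$, where $\phi_i(k)$ can be large, and here the interlacing $\lambda_{k+1}\leq\nu_k\leq\lambda_k$ together with the spike separation guaranteed by $\psi'(\alpha_i)>0$ in Assumption 4 must be used to force the paired residues to combine into a controllable error.
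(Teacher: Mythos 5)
Your set-up (Mestre's contour representation, the change of variables introducing the roots $\nu_k$ of \eqref{an9k0}, and the residue bookkeeping yielding the weights $\theta_i(k)$ in \eqref{ceeg1}) matches the paper, but the argument has a genuine gap exactly where you yourself locate "the main obstacle": the aggregated bound $\sum_{j}(\hat a_j-u_{ij}^2)^2=o_P(1)$ is never established. Mestre's Proposition 4 gives only coordinatewise almost sure convergence with no rate, and your plan to obtain the $\ell^2$ control via deterministic-equivalent/local-law estimates for the overlaps $(\hat{\bu}_k^\intercal\bu_l)(\hat{\bu}_k^\intercal\bu_{l'})$, summed over bulk indices $k$ and aggregated over $j$, is only a sketch of a rather heavy program that you do not carry out; as written the proof does not close. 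A secondary imprecision: your claim that the second Cauchy--Schwarz factor is $O_P(1)$ "via sample-eigenvector orthonormality" is not enough, since one also needs summability of the weights, i.e.\ $\sum_k|\theta_i(k)|=O_p(1)$, which requires the sign structure coming from the interlacing $\lambda_1>\nu_1>\lambda_2>\nu_2>\cdots$ (so that $\theta_i(i)>0$, $\theta_i(k)<0$ for $k\neq i$, $\sum_k\theta_i(k)=1$, and $\theta_i(i)$ bounded with high probability), not orthonormality alone.

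The paper avoids the $\ell^2$ problem altogether by a different aggregation. It strengthens Mestre's consistency to a quantitative moment bound: on the high-probability event $\Xi$ that the contour $\Upsilon$ separates $\lambda_1$ from the rest of the spectrum, it proves $E|(\hat\eta_1-\eta_1)I(\Xi)|^4\leq Cn^{-2}$ with $C$ independent of the (bounded) vectors $\bs_1,\bs_2$, by bounding the fourth moments of $\hat s_n-s_n$, $\hat m_n-m_n$ and $\hat m_n'-m_n'$ uniformly on the contour. Taking $\bs_1=\bs_2=\be_j$ and applying Markov plus a union bound over the $p\asymp n$ coordinates (which is precisely why the $n^{-2}$ rate is needed) yields $\max_j|\hat v_{1j}-u_{1j}^2|=o_p(1)$ in $\ell^\infty$. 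Combined with the $\ell^1$ bound $\sum_j|\hat v_{1j}|\leq 2\theta_1(1)-1=O_p(1)$ from the interlacing/sign argument, the elementary inequality $\sum_j|\hat v_{1j}^2-u_{1j}^4|\leq \max_j|\hat v_{1j}-u_{1j}^2|\,\sum_j(|\hat v_{1j}|+u_{1j}^2)$ finishes the proof. So the missing ingredient in your proposal is a uniform (in the test vectors) rate for the coordinatewise estimator strong enough to survive a union bound, together with the $\ell^1$ weight control; no eigenvector local law is needed.
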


\begin{Rem}
Table 5 below shows that such an estimator of $\sum_{k=1}^p u_{ik}^4$ is quite accurate.
\end{Rem}

\subsection{Testing  the equality of two spiked covariance matrices}

\indent This subsection is to explore an application of our results.  Consider the problem of testing the equality of  two spiked covariance matrices $\bSigma_1$ and $\bSigma_2$.
 Let  $\{\by_{1i}=\bSigma_1^{1/2}{\bq_{1i}}, i = 1,\cdots, n_1\}$ be  i.i.d $p$ variate random samples from the population $F_1$ with mean zero and covariance matrix $\bSigma_1$, and
  $\{\by_{2i}=\bSigma_2^{1/2}{\bq_{2i}}, i = 1,\cdots, n_2\}$ be  i.i.d  $p$ variate random samples from the population $F_2$ with mean zero and covariance matrix $\bSigma_2$. Suppose $F_1$ and $F_2$ are independent. Several tests on the hypothesis:
\begin{equation}\label{06251}
H_0 : \bSigma_1 = \bSigma_2  \quad \text{versus} \quad  H_1:\bSigma_1\neq \bSigma_2
\end{equation}
have been proposed under high-dimensional settings. To name a few,  \cite{li2012two} suggested a test based on an unbiased estimator for ${tr}[(\bSigma_1-\bSigma_2)^2]$. The test in \cite{cai2013} is motivated by studying the maximum of standardized differences between entries of two sample covariance matrices to deal with sparse alternatives. \cite{yang2017weighted} proposed a weighted statistic that is powerful for dense or sparse alternatives.


 Let $\bY_1=(\by_{11},\cdots, \by_{1n_1})$ and $\bY_2=(\by_{21},\cdots, \by_{2n_2})$. Denote $\bx_{1i}=n_1^{-1/2}\by_{1i}, i=1, \cdots, n_1$ and $\bx_{2i}=n_2^{-1/2}\by_{2i}, i=1,\cdots, n_2$. Let $\bX_1=(\bx_{11},\cdots, \bx_{1n_1})$ and $\bX_2=(\bx_{21},\cdots, \bx_{2n_2})$. Denote two sample covariance matrices by
 $$\bS_1=\frac{1}{n_1}\bY_1\bY_1^\intercal=\bSigma_1^{\frac{1}{2}} \bX_1\bX_1^\intercal \bSigma_1^{\frac{1}{2}}\quad\mbox{and}\quad
\bS_2=\frac{1}{n_2}\bY_2\bY_2^\intercal =\bSigma_2^{\frac{1}{2}} \bX_2\bX_2^\intercal \bSigma_2^{\frac{1}{2}}.$$ 
We also assume that the respective largest spike eigenvalues of $\bSigma_1$ and $\bSigma_2$ are simple for simplicity.
 Denote the largest eigenvalues of $\bS_1$ and $\bS_2$ as $\lambda_1(\bS_1)$ and $\lambda_1(\bS_2)$ respectively.  Denote the largest spiked eigenvalues of $\bSigma_k$ by $\alpha_{k1}, k=1,2$,  and the corresponding eigenvector by $\bu_{1,k}=(u_{11,k},\cdots,u_{1p,k})^\intercal, k=1,2$.  Let $\gamma_{4k}, k=1,2$ be the fourth moment of
 $\{q_{1ij},j=1,\cdots,p, i=1, \cdots, n_1\}$ and $\{q_{2ij},j=1,\cdots,p, i=1,\cdots, n_2\}$ respectively.
 A natural test statistic for (\ref{06251}) by using the largest eigenvalues and the linear spectral statistics is
 \begin{equation}\label{tes11}
\Big\{\sqrt{n}\frac{\lambda_1(\bS_1)-\lambda_1(\bS_2)}{\sigma_{spi}}\Big\}^2+\Big\{\frac{ tr(\bS_1)+ tr(\bS_1^2)- tr(\bS_2) - tr(\bS_2^2)}{\sigma_{lin}}\Big\}^2
\end{equation}
where
$$\sigma_{spi}^2 = \sigma_{spi1}^2+\sigma_{spi2}^2,  \quad \sigma_{lin}^2=\sigma_{lin1}^2+\sigma_{lin2}^2,
$$
$$\sigma_{spik}^2 = (\gamma_{4k}-3)\alpha_{k1}^2 (\psi^\prime(\alpha_{k1}))^2 \sum_{j=1}^p u_{1j,k}^4+ 2 \alpha_{k1}^2 \psi^\prime(\alpha_{k1}) ,\quad k=1,2$$ and
\begin{equation}\label{9anmj} \begin{aligned}&\sigma_{link}^2 = 8c_k r_{k4}+16c_k^2 r_{k3} r_{k1} + 8c_k r_{k3} +8c_k^3 r_{k2}(r_{k1})^2+8c_k^2 r_{k2} r_{k1}+4c_k^2(r_{k2})^2+2c_k r_{k2}\\&+
(\gamma_{4k}-3)[4c_k r_{k4}+8c_k^2 r_{k3} r_{k1}+4c_k r_{k3}+4c_k^3 r_{k2} (r_{k1})^2+4c_k^2 r_{k2} r_{k1}+c_k r_{k2}],\quad k=1,2 \end{aligned}\end{equation}
with $r_{km}= tr(\bSigma_k^m)/{p}$ and $c_k={p}/{n_k}$.  The expression \eqref{9anmj} is obtained by calculating the contour integral in (1.20) in \cite{pan2008central}. This statistic is modified further below.


 The statistic in (\ref{tes11}) is asymptotic $\chi_2^2$ under the null hypothesis by Theorem \ref{Th2.4}. We next develop the estimators of unknown parameters
 $\alpha_{1k}, \psi^\prime(\alpha_{1k}), \sum_{j=1}^p u_{1j,k}^4$, $\gamma_{4k}$ and $r_{km}$ for practical implementation. For notational simplicity, the population index $k$ is omitted and we aim to find estimators of
 $\alpha_{1}, \psi^\prime(\alpha_{1}), \sum_{j=1}^p u_{1j}^4, r_m =tr \bSigma_1^m/{p}$ and $\gamma_{4}$ associated with the population $F_1$.
 The similar estimators are applicable to $F_2$ as well.  In the following, we use $n$ to denote the sample size and recall $c_n=p/n.$

 The estimator of $\sum_{j=1}^p u_{1j}^4$ is given in Theorem \ref{Th2.5}.   For the estimation of $\alpha_1$, we use the result in \cite{bai2012estimation}
. Note that  $$\underline{m}_n^*(z) : = -\frac{1-c_n}{z} + \frac{1}{n}\sum_{j\geq 2} \frac{1}{\lambda_j-z}\stackrel{a.s.}\rightarrow \underline{m}(z), $$ and  $$-\frac{1}{\underline{m}_n^*(\lambda_1)}\stackrel{a.s.}\rightarrow \alpha_1.$$ Therefore, as proposed by \cite{bai2012estimation}, $\alpha_1$ is estimated by
\begin{equation}\label{a7g0g}
\left(\frac{1-c_n}{\lambda_1} + \frac{1}{n}\sum_{j\geq 2} \frac{1}{\lambda_1-\lambda_j}\right)^{-1}.
\end{equation}
 Consider an estimator of $\psi^\prime(\alpha_1)$ now. Since $\psi(\cdot)$ is the inverse of the function $\alpha: x \mapsto -1/\underline{m}(x)$, we obtain
 \begin{equation}\label{qow23}
 \psi^\prime(\alpha_1) = \frac{1}{\alpha_1^2 \underline{m}^\prime\{\psi(\alpha_1)\}}.
 \end{equation}
  Thus we can estimate  $\underline{m}^\prime\{\psi(\alpha_1)\}$ by taking $z = \lambda_1$ in the expression of $ {d\underline{m}_n^*(z)}/{dz}$, which is
  \begin{equation}
   \label{pawh4}\frac{1-c_n}{\lambda_1^2} + \frac{1}{n}\sum_{j\geq 2} \frac{1}{(\lambda_j-\lambda_1)^2}.
  \end{equation}
  An estimator of $\psi^\prime(\alpha_1)$ follows by replacing $\alpha_1$  with (\ref{a7g0g}) and $ \underline{m}^\prime\{\psi(\alpha)\}$ with \eqref{pawh4} in (\ref{qow23}).

Let  $s_m =  tr (\bS_1^m)/p$.  According to Lemma 2.16 in \cite{yao2015sample} and Theorem 1.4 in \cite{pan2008central}, we have the following consistent estimators $A_m$ for $r_m, m=1,2,3,4,$
\begin{equation}\label{22joq}
\begin{aligned}
&A_1 = s_1,\quad
A_2 = s_2 - c_n (A_1)^2,\quad
A_3= s_3-3c_n A_1 A_2-c_n^2 (A_1)^3,\\
&A_4=s_4-2c_n(A_2)^2-4c_n A_1 A_3-6c_n^2(A_1)^2 A_2-c_n^3(A_1)^4.
\end{aligned}
\end{equation}

To estimate $\gamma_4$, notice that
\begin{equation}\label{326hn} \mathfrak{M}:=\frac{1}{p}{ E(\by_{11}^\intercal \by_{11}- tr\bSigma_1)^2}=
\frac{\gamma_4-3}{p} \sum_{i=1}^p (\bSigma_{1ii})^2+ 2 r_2,
\end{equation}
where $\bSigma_{1ii}$ and $\bS_{1ii}$  are, respectively, the $i$-th diagonal entry of $\bSigma_{1}$ and $\bS_1$.
Since $r_2$ can be estimated by $A_2$ above, we just need to find estimators of $\mathfrak{M}$ and $\sum_{i=1}^p (\bSigma_{1ii})^2/p$.
The following Lemma specifies their consistent estimators.

\begin{Lemma} \label{lem31} {\color{blue}Under Assumptions 1 and 2, and assume that $\bSigma_1$ has bounded spectral norm,}
we have \begin{equation} \label{aah92}\frac{1}{p}\sum_{i=1}^p \bS_{1ii}^2 -\frac{1}{p}\sum_{i=1}^p (\bSigma_{1ii})^2 \stackrel{i.p.} \rightarrow 0,
\end{equation}
and \begin{equation}\label{aah93}
\frac{1}{pn}\sum_{i=1}^n ( \by_{1i}^\intercal \by_{1i} -  tr \bS_1)^2 - \mathfrak{M} \stackrel{i.p.} \rightarrow 0,
\end{equation}
\noindent where  $\by_{1i}$ denotes the $i$-th observation from the first population.
\end{Lemma}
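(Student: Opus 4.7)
The plan is to prove both convergences by direct second-moment bounds (Chebyshev's inequality), exploiting that each target quantity is a sum of nearly independent contributions whose mean already equals the claimed limit.

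For \eqref{aah92}, write $\bS_{1ii}=n^{-1}\sum_{k=1}^n y_{ik}^2$ with $y_{ik}=\ba_i^\intercal \bq_{1k}$, where $\ba_i$ denotes the $i$-th row of $\bSigma_1^{1/2}$, so that $E\bS_{1ii}=\bSigma_{1ii}$. Decomposing
\begin{equation*}
\frac{1}{p}\sum_{i=1}^p \bS_{1ii}^2-\frac{1}{p}\sum_{i=1}^p \bSigma_{1ii}^2=\frac{1}{p}\sum_{i=1}^p(\bS_{1ii}-\bSigma_{1ii})^2+\frac{2}{p}\sum_{i=1}^p\bSigma_{1ii}(\bS_{1ii}-\bSigma_{1ii}),
\end{equation*}
the standard quadratic-form variance formula applied to $y_{ik}^2=\bq_{1k}^\intercal(\ba_i\ba_i^\intercal)\bq_{1k}$ yields $\mathrm{Var}(\bS_{1ii})=n^{-1}\{(\gamma_4-3)\sum_j(\ba_i)_j^4+2\bSigma_{1ii}^2\}=O(\|\bSigma_1\|^2/n)$ uniformly in $i$. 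Hence the first sum has expectation $O(1/n)$ and is $o_p(1)$, while the cross term is $o_p(1)$ by Cauchy-Schwarz together with $p^{-1}\sum_i\bSigma_{1ii}^2\le\|\bSigma_1\|^2$.

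For \eqref{aah93}, set $T_i:=\by_{1i}^\intercal\by_{1i}=\bq_{1i}^\intercal\bSigma_1\bq_{1i}$ and $\bar T:=n^{-1}\sum_i T_i=\mathrm{tr}\,\bS_1$. The centering identity $\sum_i(T_i-\bar T)^2=\sum_i(T_i-\mathrm{tr}\bSigma_1)^2-n(\bar T-\mathrm{tr}\bSigma_1)^2$ gives
\begin{equation*}
\frac{1}{pn}\sum_{i=1}^n(T_i-\bar T)^2=\frac{1}{pn}\sum_{i=1}^n W_i^2-\frac{1}{p}(\bar T-\mathrm{tr}\bSigma_1)^2,
\end{equation*}
with $W_i:=T_i-\mathrm{tr}\bSigma_1$ i.i.d.\ and $E[W_i^2]=(\gamma_4-3)\sum_i\bSigma_{1ii}^2+2\,\mathrm{tr}\bSigma_1^2=p\mathfrak{M}$ by the same variance identity. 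Therefore $E[(pn)^{-1}\sum_i W_i^2]=\mathfrak{M}$ exactly, and the residual term has expectation $\mathrm{Var}(\bar T)/p=\mathfrak{M}/n\to 0$. The task reduces to showing $\mathrm{Var}(W_1^2)/(p^2 n)\to 0$, for which the bound $E[W_1^4]=O(p^2)$ is more than enough.

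The main obstacle is the bound $E[W_1^4]=O(p^2)$: Assumption 1 only supplies a finite fourth moment for $q_{ij}$, so direct eighth-moment based estimates on the quadratic form are unavailable. The standard remedy is to truncate $q_{ij}\mapsto\tilde q_{ij}:=q_{ij}\mathbf{1}(|q_{ij}|\le\eta_n\sqrt n)$ for some $\eta_n\downarrow 0$ sufficiently slowly (with the usual recentering and rescaling); this preserves the mean and variance up to negligible error while making all higher moments of the truncated variables finite and of controlled order. A Rosenthal-type bound for the resulting centered quadratic form then delivers $E[\tilde W_1^4]\le Cp^2$, and the contribution from the untruncated part is absorbed as in the truncation step of \cite{bai2004}. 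Chebyshev's inequality then converts the variance estimates above into $o_p(1)$, completing the proof of both \eqref{aah92} and \eqref{aah93}.
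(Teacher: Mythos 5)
Your argument is correct, but it takes a genuinely different route from the paper's. For \eqref{aah92} the paper proves the stronger statement $\max_{1\leq i\leq p}|\bS_{1ii}-\bSigma_{1ii}|\stackrel{i.p.}\rightarrow 0$, which forces a truncation of the entries at $\eta_n n^{1/2}$ and a fourth-moment Burkholder bound for each quadratic form $\bs^\intercal(\bS_1-\bSigma_1)\bs$ (an eighth-moment computation on the raw entries, hence the truncation), and then combines this with $p^{-1}\sum_i|\bS_{1ii}+\bSigma_{1ii}|=O_p(1)$; your decomposition into $p^{-1}\sum_i(\bS_{1ii}-\bSigma_{1ii})^2$ plus a cross term, handled by the exact variance formula $\mathrm{Var}(\bS_{1ii})=O(n^{-1})$ and Cauchy--Schwarz, needs only the fourth moment of $q_{11}$ and no truncation at all, so it is more elementary, at the price of not giving the uniform control over $i$ that the paper's intermediate step provides. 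For \eqref{aah93} the paper first invokes the law of large numbers for $\frac{1}{pn}\sum_i(\by_{1i}^\intercal\by_{1i}-tr\,\bSigma_1)^2$ and then bounds the effect of replacing $tr\,\bSigma_1$ by $tr\,\bS_1$ through a triangle-inequality estimate that uses the asymptotic normality of $tr\,\bS_1-tr\,\bSigma_1$; your exact centering identity $\sum_i(T_i-\bar T)^2=\sum_i(T_i-tr\,\bSigma_1)^2-n(\bar T-tr\,\bSigma_1)^2$ (valid because $\bar T=tr\,\bS_1$) makes the cross terms vanish identically and reduces everything to a Chebyshev bound, with truncation confined to the single place where moments beyond the fourth are genuinely needed, namely $E W_1^4$; this also makes explicit a uniform-integrability issue that the paper's appeal to the law of large numbers leaves implicit. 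One small inaccuracy: with truncation at $\eta_n\sqrt n$ the standard quadratic-form bound gives $E[\tilde W_1^4]=O\big(p^2+\eta_n^4 n^2\,tr\,\bSigma_1^4\big)=O\big(p^2(1+\eta_n^4 n)\big)$, which need not be $O(p^2)$ as you state; but since all you require is $E[\tilde W_1^4]=o(p^2 n)$, the bound you actually need does hold and the conclusion is unaffected.
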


We assume that $ \sum_{i=1}^p (\bSigma_{1ii})^2/p$ does not  converge to 0, which is a  mild assumption for a population covariance matrix (otherwise the variances of the majority of the underlying random variables tend to zero).
From (\ref{326hn}) and Lemma \ref{lem31}, we propose a consistent estimator for $\gamma_4$ as follows
\begin{eqnarray}
\hat{\gamma}_4=\frac{{n}^{-1}\sum_{i=1}^n  (\by_{1i}^\intercal \by_{1i})^2-(1- {2}/{n})({ tr \bS_1})^2-2  tr\bS_1^2}{\sum_{i=1}^p \bS_{1ii}^2} + 3.
\end{eqnarray}

\indent Through our simulations, we find that the largest blue sample spiked eigenvalue and the full linear spectral statistics have large correlations although they are asymptotic uncorrelated in theory. 
This is due to the fact that $cov (\sqrt{n}\lambda_1/\psi(\alpha_1), \lambda_1) = O\{\psi(\alpha)/\sqrt{n}\}$ by Theorem \ref{Th22}, which is theoretically negligible. However, in practice, it may happen that $\psi(\alpha_1)$ is comparable to $\sqrt{n}$ (for example $\psi(8)=8+7c/8$ for model 1 in the simulation part) which results in significant covariance. Therefore, we correct the statistic in (\ref{tes11}) by removing the largest sample eigenvalue from the linear spectral statistics part. Actually, by using Slutsky's theorem and the fact that the single sample  spiked eigenvalues converge to a constant, our proof of Theorem \ref{Th2.4} also applies to the case when linear spectral statistics do not include the sample  spiked eigenvalues.

\indent It then suffices to recalculate the variance of LSS part without the largest sample spiked eigenvalue and estimate it. By taking contour $z$ enclosing all the sample eigenvalues except the largest spiked one,
and after analyzing the contour integral, we find that the more accurate variance is just to replace $r_m= {tr\bSigma^m}/{p}$ with $r_m- {\alpha_1^m}/{p}$, $ m = 1,2,3,4$ in (\ref{9anmj}), and
denote it by $(\sigma_{lin}^{(1)})^2$.  The corresponding estimator for $r_m-{\alpha_1^m}/{p}$ is obtained by replacing $s_m =   {tr\bS_1^m}/{n}$ with
$s_m- {\lambda_1^m}/{n}$ in (\ref{22joq}). Thus we find an estimator for $(\sigma_{lin}^{(1)})^2$, and denote it by $(\widehat{\sigma_{lin}^{(1)}})^2$. \\

Let\begin{equation}\begin{aligned}&M_n = \sqrt{n}\frac{\lambda_1(\bS_1)-\lambda_1(\bS_2)}{\widehat{\sigma_{spi}}};\\
&L_n=\frac{\sum_{i=1}^p  f\{\lambda_i(\bS_1)\}-f\{\lambda_i(\bS_2)\}}{\widehat{\sigma_{lin}}}, \text{where} \;  f(x) = x+x^2;\\
 &L_n^{(1)}=\frac{\sum_{i=2}^p  f\{\lambda_i(\bS_1)\}-f\{\lambda_i(\bS_2)\}}{\widehat{\sigma_{lin}^{(1)}}}, \text{where}\; f(x) = x+x^2;\\
&T_n = M_n^2 + (L_n^{(1)})^2.\end{aligned}\end{equation}
We then propose the above statistic $T_n$ for (\ref{06251}). As discussed before, under $H_0$, $T_n$ is asymptotically $\chi_2^2$, and $M_n,L_n, L_n^{(1)}$ are all asymptotically $N(0,1)$ as well.

\section{Simulations}
This section is to conduct the simulations to verify the performance of the earlier proposed statistics and the accuracy of the estimator of the population eigenvectors corresponding to the spikes.

\indent We introduce five covariance models to be used in simulations.
\begin{itemize}
\item Model 1: $\bSigma^{(1)}= \diag(8,1,\cdots,1)_{p\times p} $.
\item Model 2: $\bSigma^{(2)}= \diag(6,2,\cdots,2, 1, \cdots,1)_{p\times p} $ where the number of 2 is 10.
\item Model 3:  $\bSigma^{(3)}=\bO_p  \diag(12,d_2,\cdots,d_p)_{p\times p} \bO_p^\intercal $ where $d_i=3-1.5(i-1)/p$, and
$$\bO_p = \begin{bmatrix}
\bO_1 & 0 \\
0 & \bI_{p-3}
\end{bmatrix}$$ where $\bO_1$ is a  $3\times3$ orthogonal matrix.
\item Model 4: $\bSigma^{(4)}= \bO_p \diag(15,d_2,\cdots,d_p)_{p\times p}\bO_p^\intercal  $ where $d_i=3-2(i-1)/p$ and $\bO_p$ is the same as Model 3.
\item Model 5: $\bSigma^{(5)}= \diag(12,2,\cdots,2, 1, \cdots,1)_{p\times p} $ where the number of 2 is 10.
\end{itemize}
We consider two types of distribution for entries of $\bX_1$ and $\bX_2$: standard normal distribution, and $t_{10}/\sqrt{(5/4)}$.
We investigate the performance of $T_n$, and compare it with the tests in \cite{li2012two} and \cite{cai2013}, respectively, denoted as Chen's test and Cai's test.
The performance of $M_n$ and $L_n$ are also reported.

\subsection{Approximation accuracy}
 In Tables 1 and 2, we report the empirical sizes of testing $H_0:\bSigma_1=\bSigma_2=\bSigma^{(i)}$ for $\bSigma^{(i)}$ given by the above Model 1-5. The results listed in Table 1 are for standard normal distributed entries while Table 2 is for normalized $t_{10}$ distributed entries. We run 500 simulation replications for each test of population covariance matrices. The nominal test size is 0.05.  From the tables, we can see that the empirical sizes are around 0.05, which indicates that the $\chi_2^2$ approximation is accurate. We would like to point out that although $\lambda_1 \stackrel{a.s.}\rightarrow \psi(\alpha_1)$ as $n$ goes to infinity, the approximation is not accurate enough when $n =100$.  The estimating errors in (\ref{pawh4}) and (\ref{22joq}) are slightly amplified if $\lambda_1^2$ is involved. This accounts for the slightly smaller size for statistics $T_n, M_n$ and $L_n$ in Tables 1 and 2.\\

In Table \ref{estvec}, we record the performance of our estimator of $\sum_{i=1}^p u_{1i}^4$ for Model 4. The sample size is fixed to be 100, and for each dimension case, we run 500 replications and list the mean and variance. It can be seen that the estimator performs well.
\begin{table}[htbp!]
\caption{ Empirical sizes for testing $H_0:\bSigma_1=\bSigma_2=\bSigma^{(i)}$ for data generated from Model $i$ ($i=1,2,3,4, 5$) with N(0,1) entries. The sample size is 100 for both samples.}
\centering
\footnotesize
\begin{tabular}{lllllllllll}
\hline
Model               & $p$    & 40 & 60 & 80 & 100 & 120 & 150 & 240 & 300 &  \\ \hline
 		        &$T_n$    &0.042 &0.048  &0.038   &0.050  &0.042   &0.042   &0.052  &0.038  &  \\
	                &$M_n$   &0.044 & 0.044 &0.054   &0.042  &0.042   &0.042   &0.042  &0.038  &  \\
$\bSigma^{(1)}$	&$L_n$    &0.026 &0.030  &0.046  &0.042  & 0.038  &0.030   &0.066  &0.044  & \\
			& Cai &0.036 &0.048  &0.030   &0.058  &0.044   &0.040   &0.044  &0.046  & \\
 			&Chen  &0.084 &0.078  &0.072   &0.068  &0.070   &0.060   &0.048  &0.032  &\\  \\

			&$T_n$    &0.036 &0.030  &0.040   &0.052  &0.026   &0.063   &0.042  &0.048  &  \\
			&$M_n$   &0.028 &0.032  &0.052   &0.042  &0.040   &0.042   &0.044  &0.046  &  \\
$\bSigma^{(2)}$ 	&$L_n$    &0.030 &0.030  &0.042   &0.042  &0.036   &0.054   &0.044  &0.044  & \\
			& Cai &0.044 & 0.038 &0.048   &0.058  &0.044   &0.048   &0.044  &0.038  & \\
			&Chen  &0.056 &0.050  & 0.066  &0.056  &0.046   &0.053   &0.054  &0.056  &\\  \\

			&$T_n$    &0.024 &0.030  &0.046  &0.034  &0.040   &0.044   &0.042  &0.068  &  \\
			&$M_n$   &0.038&0.036 &0.044  &0.072   &0.034  &0.034   &0.038    &0.066  &  \\
$\bSigma^{(3)}$	&$L_n$    &0.040 &0.032  &0.036   &0.046  &0.050   &0.038   &0.050  &0.070  & \\
			& Cai &0.040 &0.046  &0.048   &0.036  &0.032   &0.054   &0.048  &0.028  & \\
			&Chen  &0.062 &0.060  &0.050   &0.056  &0.058   &0.050   &0.044  &0.038  &\\ \\

			&$T_n$    &0.056 &0.044  &0.048  &0.034  &0.048   &0.044   &0.030  &0.030  &  \\
			&$M_n$   &0.046 &0.040 &0.043  &0.040   &0.044  &0.044   &0.032    &0.030  &  \\
$\bSigma^{(4)}$&$L_n$    &0.034 &0.040  &0.038   &0.032  &0.038   &0.052   &0.048  &0.036  & \\
			& Cai &0.046 &0.056  &0.050   &0.040  &0.054   &0.054   &0.026  &0.040  & \\
			&Chen  &0.072 &0.064  &0.086   &0.066  &0.058   &0.058   &0.030  &0.042  &\\ \\
			
			&$T_n$    &0.028 &0.052  &0.040  &0.046  &0.060   &0.056   &0.046  &0.058  &  \\
			&$M_n$   &0.042 &0.028 &0.046  &0.036   &0.038  &0.042   &0.030    &0.052  &  \\
$\bSigma^{(5)}$&$L_n$    &0.026 &0.024  &0.042   &0.032  &0.042   &0.034   &0.038  &0.040  & \\
			& Cai &0.046 &0.054  &0.054   &0.044 &0.046   &0.038   &0.028  &0.042  & \\
			&Chen  &0.074 &0.062  &0.090   &0.054  &0.064   &0.052   &0.078  &0.056  &\\

\hline
\end{tabular}
\end{table}

\begin{table}[]
\caption{ Empirical sizes for testing $H_0:\bSigma_1=\bSigma_2=\bSigma^{(i)}$ for data generated from Model $i$ ($i=1,2,3,4,5$) with $t_{10}/\sqrt{5/4}$ entries. The sample size is 100 for both samples.}
\centering
\footnotesize
\begin{tabular}{lllllllllll}
\hline
Model               &p    & 40 & 60 & 80 & 100 & 120 & 150 & 240 & 300 &  \\ \hline
 		        &$T_n$    &0.052 &0.034  &0.054  &0.038  &0.034   &0.042   &0.030  &0.040  &  \\
	                &$M_n$   &0.048 & 0.028 &0.042   &0.036  &0.042   &0.038   &0.036  &0.040  &  \\
$\bSigma^{(1)}$	&$L_n$    &0.034 &0.026  &0.042  &0.036  & 0.042  &0.046   &0.06  &0.056  & \\
			& Cai &0.036 &0.040  &0.040   &0.022  &0.022   &0.032   &0.034  &0.020  & \\
 			&Chen  &0.098 &0.102  &0.112   &0.094  &0.092   &0.054   &0.048  &0.050  &\\  \\

			&$T_n$     &0.050 &0.026  &0.020   &0.042  &0.058   &0.046  &0.048  &0.038  &  \\
			&$M_n$   &0.048 &0.028  &0.026   &0.034  &0.042   &0.030   &0.034 &0.038  &  \\
$\bSigma^{(2)}$ 	&$L_n$    &0.054 &0.026  &0.042   &0.048  &0.046   &0.052   &0.052  &0.052  & \\
			& Cai &0.036 & 0.036 &0.034   &0.032  &0.046   &0.020   &0.034  &0.026  & \\
			&Chen  &0.086 &0.058  & 0.082  &0.080  &0.072   &0.052   &0.056  &0.050  &\\  \\

			&$T_n$    &0.038 &0.030  &0.034  &0.046  &0.042   &0.030   &0.038  &0.048  &  \\
			&$M_n$  &0.044&0.036 &0.044  &0.042   &0.034  &0.026   &0.040    &0.018  &  \\
$\bSigma^{(3)}$	&$L_n$    &0.028 &0.038  &0.036   &0.040  &0.050   &0.042   &0.044  &0.046  & \\
			& Cai &0.028 &0.032  &0.032   &0.020  &0.030   &0.032   &0.038  &0.028  & \\
			&Chen  &0.084 &0.066  &0.072   &0.054  &0.066   &0.050   &0.038  &0.040  &\\ \\

			&$T_n$   &0.040 &0.038  &0.046  &0.034  &0.046   &0.046   &0.038  &0.040  &  \\
			&$M_n$   &0.042 &0.034 &0.034  &0.054   &0.030  &0.036  &0.038    &0.040  &  \\
$\bSigma^{(4)}$	&$L_n$    &0.018 &0.018  &0.032   &0.026  &0.032   &0.020   &0.046  &0.044  & \\
			& Cai &0.038 &0.042  &0.040   &0.028  &0.022   &0.024   &0.032  &0.018  & \\
			&Chen  &0.108 &0.078  &0.066   &0.090  &0.056   &0.094   &0.058  &0.064  &\\ \\
			
						&$T_n$    &0.044 &0.042  &0.052  &0.060  &0.030   &0.060   &0.034  &0.042  &  \\
			&$M_n$   &0.022 &0.042 &0.056  &0.032 &0.024  &0.038   &0.036    &0.026  &  \\
$\bSigma^{(5)}$&$L_n$    &0.022 &0.034  &0.028   &0.030  &0.018   &0.038   &0.034  &0.038  & \\
			& Cai &0.026 &0.038  &0.030   &0.038 &0.034   &0.046   &0.028  &0.032  & \\
			&Chen  &0.100 &0.118  &0.104   &0.086  &0.084   &0.104  &0.070  &0.048  &\\

\hline
\end{tabular}
\end{table}

\subsection{Power discussion}
We consider the power of tests for comparing three pairs of covariances $\bSigma^{(1)}$ vs $\bSigma^{(2)}$, $\bSigma^{(3)}$ vs $\bSigma^{(4)}$ and  $\bSigma^{(2)}$ vs $\bSigma^{(5)}$.
 The empirical powers of the above three comparisons are, respectively, summarized in Tables \ref{Comp12}, \ref{Comp34} and \ref{Comp25}.
In Tables \ref{Comp12}-\ref{Comp25}, we find that $T_n$ always outperforms Cai's test and Chen's test. In Table \ref{Comp12}, all the three tests i.e., $T_n$, Chen and Cai's tests have competitive powers.
However, in Table \ref{Comp34}, both Cai and Chen's tests lose powers while $T_n$ has significant better powers and the powers increase as $p$ increases.
For the test comparing $\bSigma^{(2)}$ and $\bSigma^{(5)}$ in Table \ref{Comp25}, Cai's test loses powers, while $T_n$ and Chen's tests have satisfactory powers.
The performance of $T_n$ is more stable than Chen's test as $p$ increases, and $T_n$ outperforms Chen's test for large enough $p$.
In fact, we can infer from \eqref{43afb} that the limit of difference of two sample spiked eigenvalues increases as $p$ increases, so it is understandable that $M_n$ has good powers for large $p$ cases.

We observe $T_n$ has good powers whenever the differences between the two covariances are introduced by either
the non-spike eigenvalues (Table \ref{Comp12} and \ref{Comp34}) or the spike eigenvalues (Table \ref{Comp25}).  
Specifically, for the tests comparing $\bSigma^{(1)}$ and $\bSigma^{(2)}$ (Table \ref{Comp12}), the main differences between $\bSigma^{(1)}$ and $\bSigma^{(2)}$ are from non-spike eigenvalues.
Thus, $M_n$ has relatively low powers in this scenario, and the powers of $T_n$ in Table \ref{Comp12} are inherited from the difference of LSS excluding the largest population eigenvalue, i.e. the statistic $L_n^{(1)}$. This
phenomenon can be also seen by comparing the powers of $L_n$ and $L_n^{(1)}$.
Note that $L_n$ does not have good powers because the largest eigenvalue in $\bSigma^{(2)}$ is smaller than that of $\bSigma^{(1)}$, which offsets the effect of $L_n^{(1)}$.
In Table \ref{Comp34}, the powers of $T_n$ are also mainly contributed by $L_n^{(1)}$, but different from the results in Table \ref{Comp12}, $L_n$ and $L_n^{(1)}$ both have powers close to 1 for large $p$.
However, in Table \ref{Comp25}, $M_n$ has significant powers but $L_n^{(1)}$ loses power because $\bSigma^{(2)}$ and $\bSigma^{(5)}$ shares the same eigenvalues except the large difference between their largest spiked population eigenvalues. In this scenario, the powers of $T_n$ are mainly due to the contribution of $M_n$.


\begin{table}[]
\caption{Empirical powers for testing $H_0:\bSigma_1=\bSigma_2$ where $\bSigma_1=\bSigma^{(1)}$ and $\bSigma_2=\bSigma^{(2)}$ with two types of data entries:  $N(0,1)$ and  $t_{10}/\sqrt{5/4}$. The sample size is 100 for both samples.}
\centering
\footnotesize
\begin{tabular}{lllllllllll}
\hline
{Data Entries}  &$p$    & 40 & 60 & 80 & 100 & 120 & 150 & 240 & 300 &  \\ \hline
 		        &$T_n$    &1.000 &1.000  &1.000  &0.994  &0.992   &0.978   &0.894  &0.822  &  \\
	                &$M_n$   &0.248 & 0.266 &0.224   &0.218  &0.218   &0.246   &0.264  &0.220  &  \\
$N(0,1)$	       &$L_n$    &0.194 &0.188  &0.232  &0.256  & 0.286  &0.230   &0.274  &0.270  & \\
 		        &$L_n^{(1)}$    &1.000 &1.000  &1.000 &1.000  & 0.996  &0.978   &0.884  &0.844  & \\
			& Cai &0.796 &0.668  &0.584   &0.530  &0.434   &0.372   &0.260  &0.194  & \\
 			&Chen  &0.852 &0.786  &0.722   &0.606  &0.548   &0.448   &0.332  &0.240  &\\

\\

			&$T_n$    &1.000 &0.998  &0.996   &0.980  &0.942   &0.898   &0.728  &0.600  &  \\
			&$M_n$   &0.148 &0.164 &0.150   &0.160  &0.198   &0.136   &0.194 &0.136  &  \\
$t_{10}/\sqrt{5/4}$&$L_n$  &0.142 &0.176  &0.182   &0.204  &0.200   &0.204   &0.200  &0.216  & \\
			&$L_n^{(1)}$   &1.000 &0.996  &0.994 &0.984  & 0.966  &0.924   &0.748  &0.632  & \\
			& Cai  &0.448 & 0.326 &0.236   &0.190  &0.162   &0.132   &0.066  &0.062  & \\
			&Chen  &0.814 &0.774  & 0.710  &0.586  &0.522  &0.458   &0.346  &0.258  &\\

\hline
\end{tabular}\label{Comp12}
\end{table}

\begin{table}[]
\caption{Empirical powers for testing $H_0:\bSigma_1=\bSigma_2$ where $\bSigma_1=\bSigma^{(3)}$ and $\bSigma_2=\bSigma^{(4)}$ with two types of data entries:  $N(0,1)$ and  $t_{10}/\sqrt{5/4}$. The sample size is 100 for both samples.}
\centering
\footnotesize
\begin{tabular}{lllllllllll}
\hline
Data Entries  &$p$    & 40 & 60 & 80 & 100 & 120 & 150 & 240 & 300 &  \\ \hline
 		        &$T_n$    &0.792 &0.938  &0.976  &0.994  &1.000   &1.000   &1.000  &1.000  &  \\
	                &$M_n$   &0.176 & 0.176 &0.156   &0.150  &0.142   &0.132   &0.098  &0.084  &  \\
$N(0,1)$	       &$L_n$    &0.040 &0.062  &0.178 &0.318  & 0.506  &0.762  &0.994  &1.000  & \\
			&$L_n^{(1)}$   &0.818 &0.964  &0.986 &0.998  & 1.000  &1.000   &1.000  &1.000  & \\
			&Cai &0.062 &0.050  &0.092   &0.074  &0.068   &0.076   &0.070  &0.052  & \\
 			&Chen  &0.250 &0.236  &0.206   &0.172  &0.174   &0.150   &0.130  &0.160  &\\

\\

			&$T_n$    &0.636 &0.818  &0.930   &0.954  &0.982   &0.998   &1.000  &1.000  &  \\
			&$M_n$   &0.158 &0.146 &0.104   &0.114  &0.136   &0.116   &0.072 &0.074  &  \\
$t_{10}/\sqrt{5/4}$&$L_n$  &0.024 &0.054  &0.144   &0.266  &0.374   &0.632   &0.986  &0.996  & \\
			&$L_n^{(1)}$   &0.686 &0.862  &0.952 &0.960  & 0.992  &1.000   &1.000  &1.000  & \\
			& Cai &0.032 & 0.058 &0.026   &0.052  &0.040   &0.030   &0.032  &0.036  & \\
			&Chen  &0.248 &0.236  & 0.212  &0.178  &0.216   &0.184   &0.168  &0.122  &\\
\hline
\end{tabular}\label{Comp34}
\end{table}

\begin{table}[]
\caption{Empirical powers for testing $H_0:\bSigma_1=\bSigma_2$ where $\bSigma_1=\bSigma^{(2)}$ and $\bSigma_2=\bSigma^{(5)}$ with two types of data entries:  $N(0,1)$ and  $t_{10}/\sqrt{5/4}$. The sample size is 100 for both samples.}
\centering
\footnotesize
\begin{tabular}{lllllllllll}
\hline
Data Entries  &$p$    & 40 & 60 & 80 & 100 & 120 & 150 & 240 & 300 &  \\ \hline
 		        &$T_n$    &0.870 &0.858  &0.864  &0.842  &0.842   &0.820   &0.830  &0.844  &  \\
	                &$M_n$   &0.910 & 0.940 &0.946 &0.926  &0.908   &0.888   &0.900  &0.906  &  \\
$N(0,1)$	       &$L_n$    &0.882 &0.892  &0.878 &0.856  & 0.818  &0.772   &0.722  &0.704  & \\
			&$L_n^{(1)}$   &0.046 &0.048  &0.046 &0.046  & 0.058  &0.056   &0.036  &0.062  & \\
			&Cai &0.236 &0.140  &0.130   &0.110  &0.098   &0.096   &0.076  &0.058  & \\
 			&Chen  &0.908 &0.872  &0.878   &0.854  &0.778   &0.730   &0.630  &0.596  &\\

\\

			&$T_n$    &0.706 &0.644  &0.670   &0.702  &0.694   &0.672   &0.670  &0.634  &  \\
			&$M_n$   &0.822 &0.748 &0.790   &0.786  &0.804   &0.786   &0.782 &0.770  &  \\
$t_{10}/\sqrt{5/4}$&$L_n$  &0.756 &0.696  &0.688   &0.682  &0.668   &0.600   &0.546  &0.490  & \\
			&$L_n^{(1)}$   &0.054 &0.046  &0.050 &0.040  & 0.050  &0.054   &0.044  &0.042  & \\
			& Cai &0.096 & 0.062 &0.058   &0.058  &0.060   &0.050   &0.034  &0.034  & \\
			&Chen  &0.838 &0.784  & 0.790  &0.768  &0.778   &0.704   &0.634  &0.588  &\\
\hline
\end{tabular}\label{Comp25}
\end{table}

\begin{table}
\caption{Empirical mean and variance of the proposed estimators for $\sum_{i=1}^p u_{1i}^4$, with true value $0.5317$ in Model 4. The sample size is 100 and
the simulation replication is 500.}
\centering
\footnotesize
\begin{tabular}{lllllllllll}
\hline
 { Data Entries}  & $p$  &40 &60  &80   &100  &120   &150 &240 &300 &\\ \hline
$N(0,1)$	  & mean &0.5387 &0.5417 &0.5390 &0.5404 &0.5443 &0.5430 &0.5532 &0.5489&\\
                         &var &0.0029 &0.0035 &0.0044 &0.0041 &0.0044 &0.0057 &0.0073 &0.0092 \\

$t_{10}/\sqrt{5/4}$	  & mean &0.5378 &0.5414 &0.5394 &0.5368 &0.5425 &0.5442 &0.5441 &0.5567&\\
                    		&var &0.0039 &0.0042 &0.0044 &0.0048 &0.0053 &0.0055 &0.0074 &0.0110 \\

\hline
\end{tabular}\label{estvec}
\end{table}

\section{Proof of Theorem \ref{Th21}}

This section is to give the proof of Theorem \ref{Th21}. We begin with a list of results.

1. Two matrix formulas: \begin{eqnarray}\label{maeq1}
\bA^{-1}-\bB^{-1}=\bB^{-1}(\bB-\bA)\bA^{-1}
\end{eqnarray}
\begin{eqnarray}\label{maeq2}
\bA(\bI+\bB\bA)^{-1}=(\bI+\bA \bB)^{-1}\bA
\end{eqnarray}

2. Let $X = (X_1,\cdots,X_n)$, where $X_i$'s are i.i.d real random variables with mean zero and variance one. Let $\bA=(a_{ij})_{n\times n}$ and $\bB =(b _{ij})_{n\times n}$ be two real or complex matrices. Then we have an identity
\begin{equation}\begin{aligned}\label{543ah} E(X^\intercal\bA X- tr \bA)(X^\intercal\bB X-tr\bB) = (E|X_1|^4-3)\sum_{i=1}^n a_{ii}b_{ii} + tr\bA \bB^\intercal + tr\bA \bB.\end{aligned}\end{equation}

\begin{Lemma}\label{Lem4.2} (Theorem 35.12 of \cite{billingsley1995}) Suppose that for each $n$, $Y_{n1},Y_{n2},\cdots, Y_{nr_n}$ is a real martingale difference sequence with respect to an increasing $\sigma$-field $\{\mathcal{F}_{nj}\}$ having second moments. If as $n\rightarrow \infty$,\\
$$ (i) \hspace{1cm}\sum_{j=1}^{r_n}E(Y_{nj}^2|\mathcal{F}_{n,j-1})\stackrel{i.p.}\rightarrow\sigma^2,$$where $\sigma^2$ is a positive constant, and for each $\epsilon >0$,
$$ (ii) \hspace{1cm}\sum_{j=1}^{r_n}E\{Y_{nj}^2 { I({|Y_{nj}|>\epsilon)}\}}\rightarrow 0,$$
then $$ \sum_{j=1}^{r_n} Y_{nj} \stackrel{D}\rightarrow N(0,\sigma^2).$$
\end{Lemma}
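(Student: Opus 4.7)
The plan is to prove convergence of characteristic functions: letting $S_n = \sum_{j=1}^{r_n} Y_{nj}$, I would show that $\phi_n(t) := E[e^{itS_n}] \to e^{-\sigma^2 t^2/2}$ for each fixed $t \in \mathbb{R}$, which yields the weak convergence by L\'evy's continuity theorem. Because the raw martingale differences are only controlled through the Lindeberg-type hypothesis (ii), the first step is a truncation. For fixed $\epsilon > 0$, set $Y'_{nj} := Y_{nj}I(|Y_{nj}|\le \epsilon)$ and $\tilde Y_{nj} := Y'_{nj} - E[Y'_{nj} \mid \mathcal{F}_{n,j-1}]$; then $\{\tilde Y_{nj}\}$ is again a martingale difference array, $|\tilde Y_{nj}|\le 2\epsilon$, and a direct computation using (ii) together with orthogonality shows $\tilde S_n := \sum_j \tilde Y_{nj}$ satisfies $S_n - \tilde S_n \to 0$ in probability. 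The same truncation argument shows that the predictable quadratic variation $V_n := \sum_{j=1}^{r_n} E[\tilde Y_{nj}^2 \mid \mathcal{F}_{n,j-1}]$ differs from $\sum_j E[Y_{nj}^2 \mid \mathcal{F}_{n,j-1}]$ by a term vanishing in probability, so by (i) we have $V_n \stackrel{i.p.}\rightarrow \sigma^2$. It therefore suffices to prove $E[e^{it\tilde S_n}] \to e^{-\sigma^2 t^2/2}$.

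The heart of the argument is a telescoping identity along the filtration. Writing $\tilde S_{n,j} = \sum_{k\le j}\tilde Y_{nk}$ and $\sigma_{nj}^2 = E[\tilde Y_{nj}^2 \mid \mathcal{F}_{n,j-1}]$, one expands the difference $e^{it\tilde S_n} - e^{-t^2 V_n/2}$ as a sum over $j$ of terms of the form $e^{it\tilde S_{n,j-1}}e^{-t^2(V_n - V_{nj})/2}\bigl(e^{it\tilde Y_{nj}} - e^{-t^2\sigma_{nj}^2/2}\bigr)$. Taking expectations and using the conditional Taylor expansion $E[e^{it\tilde Y_{nj}} \mid \mathcal{F}_{n,j-1}] = 1 - \tfrac{t^2}{2}\sigma_{nj}^2 + R_{nj}$ with $|R_{nj}| \le C|t|^3 \epsilon\, \sigma_{nj}^2$ (thanks to $|\tilde Y_{nj}|\le 2\epsilon$ and $E[\tilde Y_{nj}\mid \mathcal{F}_{n,j-1}]=0$), together with the deterministic bound $|e^{-t^2\sigma_{nj}^2/2} - (1 - \tfrac{t^2}{2}\sigma_{nj}^2)| \le C t^4\sigma_{nj}^4 \le C t^4 \epsilon^2 \sigma_{nj}^2$, each summand contributes an error bounded by $C(t)\epsilon\, \sigma_{nj}^2$. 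Summing over $j$ gives a total error bounded by $C(t)\epsilon\, E[V_n]$, which is uniformly bounded since $V_n \stackrel{i.p.}\rightarrow \sigma^2$ and $V_n \le \sum_j E[Y_{nj}^2 \mid \mathcal{F}_{n,j-1}]$ can be controlled on a high-probability event.

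Combining the two sides, $\limsup_n |E[e^{it\tilde S_n}] - E[e^{-t^2 V_n/2}]| \le C(t)\epsilon$, and since $V_n \stackrel{i.p.}\rightarrow \sigma^2$ with $0 \le e^{-t^2 V_n/2} \le 1$, dominated convergence yields $E[e^{-t^2 V_n/2}]\to e^{-t^2\sigma^2/2}$; letting $\epsilon \downarrow 0$ concludes $\phi_n(t) \to e^{-t^2\sigma^2/2}$. The main obstacle, and the only genuinely delicate point, is the non-adaptedness of the factor $e^{-t^2(V_n - V_{nj})/2}$ inside the telescoping sum: since $V_n - V_{nj}$ is not $\mathcal{F}_{nj}$-measurable, one cannot condition directly. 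I would handle this by first replacing $V_n$ in the exponent by its stopped/truncated version $V_n \wedge (\sigma^2 + 1)$, so that the replacement factor is bounded and its effect is $o(1)$ by condition (i), and then using a stopping-time argument (stop as soon as the partial conditional variance exceeds $\sigma^2 + 1$) to make each telescoping step genuinely conditional on $\mathcal{F}_{n,j-1}$. This reduction is standard but is the one place where the martingale structure, as opposed to independence, must be used carefully.
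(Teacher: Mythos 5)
The paper does not prove this lemma at all: it is quoted verbatim as Theorem 35.12 of Billingsley (1995) and used as a black box, so there is no in-paper proof to compare your attempt against. Your sketch is essentially the classical proof from that reference --- truncation at level $\epsilon$ with recentering to obtain a bounded martingale difference array, control of the truncation error and of the conditional variance via hypothesis (ii), and convergence of characteristic functions via a telescoping expansion combined with a third-order conditional Taylor estimate --- and the outline is sound. The one step that does not work as literally written is your telescoping: the factor $e^{-t^2(V_n-V_{nj})/2}$ involves the future conditional variances $\sigma_{nk}^2$ for $k>j$, which are $\mathcal{F}_{n,k-1}$- but not $\mathcal{F}_{nj}$-measurable, and neither capping $V_n$ at $\sigma^2+1$ nor stopping once the partial conditional variance exceeds $\sigma^2+1$ removes that measurability obstruction (stopping only buys boundedness). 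The standard repair --- the one Billingsley actually uses --- is to telescope the product $e^{itS_{nj}+t^2V_{nj}/2}$ instead, so that the $j$-th increment is $e^{itS_{n,j-1}+t^2V_{nj}/2}\bigl(e^{itY_{nj}}-e^{-t^2\sigma_{nj}^2/2}\bigr)$ with a prefactor that is $\mathcal{F}_{n,j-1}$-measurable (and bounded after the stopping-time reduction); one then conditions on $\mathcal{F}_{n,j-1}$ and applies exactly the Taylor bounds you state to show $E[e^{itS_n+t^2V_n/2}]\to 1$, which together with $V_n\stackrel{i.p.}\rightarrow\sigma^2$ yields the conclusion. With that rearrangement your argument is the textbook proof; as it stands, the telescoping step has a genuine gap.
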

4. Suppose that entries of $\bx$ is truncated at $\eta_n n^{1/4}$ and centralized, i.e. $x_{ij} = \frac{1}{\sqrt{n}}q_{ij}$, where $q_{ij}$ satisfying Assumption 1, are truncated at $\eta_n n^{1/4}$ and centralized. $\bM, \bM_1$ and $\bM_2$ are $p\times p$ non-random matrices (or independent of $\bx$). $\bw$ is a $p\times 1$ non-random vector with a bounded spectral norm.  We conclude the following simple results from Lemma 2.2 in \cite{bai2004}:
\begin{equation}\label{lar11}E|\bx^\intercal \bM \bx- \frac{1}{n}tr \bM|^d  \leq C ||\bM||^d n^{- d/2},\end{equation}
\begin{equation} \label{lar22} E|\bx^\intercal \bM_1 \bw\bw^\intercal \bM_2\bx- \frac{1}{n}\bw^\intercal \bM_2\bM_1\bw |^d \leq C||\bM_1||^d ||\bM_2||^d \eta_n^{2d-4}n^{-d/2-1},\end{equation}
\begin{equation} \label{lar33} E|\bx^\intercal \bM_1 \bw\bw^\intercal \bM_2\bx|^d \leq C||\bM_1||^d ||\bM_2||^d \eta_n^{2d-4}n^{-d/2-1}.
\end{equation}

\noindent \textbf{Proof of Theorem \ref{Th21}}. We below only prove (\ref{324hi}) and the proof of (\ref{ah72h}) is similar. The overall strategy of the proof is to decompose
$\bw_1^{\intercal}\bX(\bI-\bX^{\intercal}\frac{\bSigma_{1P}}{\psi_n(\alpha)}\bX)^{-1}\bX^{\intercal}\bw_1$ into summation of martingale differences and then apply Lemma \ref{Lem4.2}.
We assume that $X$ has already been truncated at $\eta_n n^{1/4}$ and centralized according to the argument in the Appendix.\\

\noindent \textbf{CLT of the random part}.
Throughout the rest of the paper, let $\bx_k$ be the $k$-th  ($k=1,\cdots, n$) column of $\bX$, and $\be_k = (0,\cdots, 0,1,0,\cdots,0)$ be an $n$-dimensional vector with  the $k$-th element being 1.
We use $C$ to denote constants which may change from line to line. 
Introduce notations
\begin{eqnarray}\label{noset}\begin{aligned}&\theta =\lim \psi_n(\alpha) =\psi(\alpha), \quad \bX_k = \bX- \bx_k \be_k^{\intercal}, \quad \bX_{jk} = \bX- \bx_k \be_k^{\intercal}-\bx_j \be_j^{\intercal}, \quad  \btSigma_1=\frac{\bSigma_{1P}}{\psi_n(\alpha)},\\ &\bA= \bI_n -\bX^\intercal \tilde{\bSigma}_1\bX,\quad \bA_k= \bI_n -\bX_k^\intercal \tilde{\bSigma}_1\bX_k, \ \bD_k =\bI_p-\btSigma_1\bX_k \bX_k^\intercal,\ \bD = \bI_p-\tilde{\bSigma}_1 \bX\bX^{\intercal},
\\ & \bD_{jk}=\bI_p-\tilde{\bSigma}_1\bX_{jk}\bX_{jk}^\intercal, \quad\bB_k = \bD_k^{-1}\bw_1\bw_1^{\intercal}(\bD_k^\intercal)^{-1},\quad \delta_k = \bx_k^{\intercal}\bB_k\bx_k - \frac{1}{n}tr\bB_k \\ &  \alpha_k = \frac{1}{1-\bx_k^\intercal \tilde{\bSigma}_1(\bD_k^\intercal)^{-1}\bx_k},\quad \alpha_{jk}= \frac{1}{1-\bx_k^\intercal \tilde{\bSigma}_1 (\bD_{jk}^\intercal)^{-1}\bx_k}\quad \bar{\alpha}_k=\frac{1}{1-\frac{1}{n}tr\tilde{\bSigma}_1(\bD_k^\intercal)^{-1}},\\&  \bar{\alpha}_{jk} =  \frac{1}{1-\frac{1}{n}tr \tilde{\bSigma}_1 (\bD_{jk}^\intercal)^{-1}},  \quad a_n = \frac{1}{1-\frac{1}{n}E tr\tilde{\bSigma}_1(\bD_1^\intercal)^{-1}},\quad a_{1n} = \frac{1}{1-E\frac{1}{n}tr \tilde{\bSigma}_1 (\bD_{12}^\intercal)^{-1}}\\& \gamma_k = \bx_k^{\intercal}\tilde{\bSigma}_1(\bD_k^\intercal)^{-1}\bx_k - \frac{1}{n}tr\tilde{\bSigma}_1(\bD_k^\intercal)^{-1},\quad \gamma_{1k} = \bx_k^{\intercal}\tilde{\bSigma}_1(\bD_{1k}^\intercal)^{-1}\bx_k - \frac{1}{n}tr\tilde{\bSigma}_1(\bD_{1k}^\intercal)^{-1}.
\end{aligned}\end{eqnarray}
With the help of \eqref{maeq2}, it is not difficult to conclude the following facts:\begin{eqnarray}\label{s1jag}
\be_k^\intercal \bX_k^\intercal = \be_k^\intercal \bA_k^{-1}\bX_k^\intercal =0,
\end{eqnarray}and
\begin{eqnarray}\label{s2jag}
\be_k^\intercal \bA^{-1} \bX^\intercal = \be_k^\intercal \bX^\intercal \bD^{-1}=\bx_k^\intercal \bD_k^{-1}\alpha_k.
\end{eqnarray}
Note that the quantities defined in \eqref{noset} such as $\alpha_k$, $\bar{\alpha}_k$ and $a_n$ are not always bounded and the matrices such as $\bA$, $\bD_{k} $ are not always invertible.  So we introduce events

 \begin{equation}\label{8ewo3}\begin{aligned}
&\mathcal{B}_{1}=\{|| \tilde{\bSigma}_1\bX\bX^\intercal|| \leq 1-\epsilon\}, \;    \mathcal{B}_{1k}=\{||  \tilde{\bSigma}_1\bX_k \bX_k^\intercal|| \leq 1-\epsilon\} , \quad \mathcal{B}_{1jk}=\{  \tilde{\bSigma}_1\bX_{jk}\bX_{jk}^\intercal|| \leq 1-\epsilon\}, \\
&\mathcal{B}_{2k}= \{ |\bx_k^\intercal \tilde{\bSigma}_1 ({\bD_k^\intercal})^{-1}\bx_k - (1 +\frac{1}{\theta \underline{m}(\theta)})| < \epsilon\},  \;   \mathcal{B}_{2jk}= \{ |\bx_k^\intercal \tilde{\bSigma}_1 ({\bD_{jk}^\intercal})^{-1}\bx_k - (1 +\frac{1}{\theta \underline{m}(\theta)})| <\epsilon\}, \\& \mathcal{B}_{3k}=\{|\frac{1}{n}tr\tilde{\bSigma}_1 ({\bD_k^\intercal})^{-1}-(1 +\frac{1}{\theta \underline{m}(\theta)})| < \epsilon\} , \; \mathcal{B}_{3jk}=\{|\frac{1}{n}tr\tilde{\bSigma}_1({\bD_{jk}^\intercal})^{-1}-(1 +\frac{1}{\theta \underline{m}(\theta)})| < \epsilon\},\end{aligned}\end{equation}where $\epsilon$ is a small positive constant. Note that $\mathcal{B}_1\subseteq \mathcal{B}_{1k} \subseteq \mathcal{B}_{1jk}$.
Denote \begin{equation}\label{defevnt43}
 \mathcal{B}= \mathcal{B}_1\bigcap (\bigcap_{i=2,3}\bigcap_{k=1}^n \mathcal{B}_{ik})\bigcap (\bigcap_{i=2,3}\bigcap_{1\leq j\neq k\leq n} \mathcal{B}_{ijk}).
  \end{equation}
Then we have following lemma and the proof is postponed to the Appendix.
\begin{Lemma}\label{Evbhighprob}
The event $\mathcal{B}$ holds with high probability (i.e.$P(\mathcal{B})=1-n^{-l}$ for any large constant $l$).
\end{Lemma}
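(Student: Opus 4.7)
The plan is to apply a union bound over the $O(n^2)$ events constituting $\mathcal{B}$ in \eqref{defevnt43}; it then suffices to bound each by $O(n^{-l-2})$ for any fixed $l$. Since $\mathcal{B}_1\subseteq\mathcal{B}_{1k}\subseteq\mathcal{B}_{1jk}$, and since the $\mathcal{B}_{2k}/\mathcal{B}_{2jk}$ and $\mathcal{B}_{3k}/\mathcal{B}_{3jk}$ pairs differ only by one or two rank-one column removals from $\bX$, it is enough to treat the three representative events $\mathcal{B}_1$, $\mathcal{B}_{3k}$ and $\mathcal{B}_{2k}$. Throughout I work on the truncated model $|q_{ij}|\leq\eta_n n^{1/4}$ that is already in force.

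For $\mathcal{B}_1$, decompose $\bX\bX^\intercal=\bU_1\bU_1^\intercal\bX\bX^\intercal+\bU_2\bU_2^\intercal\bX\bX^\intercal$ and use that $\tilde{\bSigma}_1$ is supported on the column space of $\bU_2$ to write $\|\tilde{\bSigma}_1\bX\bX^\intercal\|=\|(\bLambda_P/\psi_n(\alpha))\bU_2^\intercal\bX\bX^\intercal\|$. Up to lower-order corrections this is controlled by $\lambda_{\max}(\bSigma_{1P}^{1/2}\bX\bX^\intercal\bSigma_{1P}^{1/2})/\psi_n(\alpha)$, the largest eigenvalue of the non-spiked sample covariance normalized by $\psi_n(\alpha)$. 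Its limiting value is $\lambda^*/\psi(\alpha)<1$, where $\lambda^*$ is the right edge of $F^{c,H}$ and the strict inequality follows from Assumption 4 ($\psi'(\alpha)>0$), which places $\alpha$ in the distant-spike regime characterized in \cite{bai2012}. The required $n^{-l}$ polynomial decay is then provided by the no-eigenvalue-outside-the-support bounds of \cite{bai1998no,bai1999exact}, which hold without any block-diagonal structure and apply after the truncation.

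For $\mathcal{B}_{3k}$, the first task is to identify the deterministic limit as $1+1/(\theta\underline{m}(\theta))$. Writing $\tilde{\bSigma}_1(\bD_k^\intercal)^{-1}=\bD_k^{-1}\tilde{\bSigma}_1$ via \eqref{maeq2}, expanding $\bD_k^{-1}$ through the Silverstein equation \eqref{23ato} evaluated at $z=\theta$, and using the identity $-1/\underline{m}(\theta)=\alpha$ (a consequence of $\theta=\psi_n(\alpha)$ and $\psi$ being the functional inverse of $x\mapsto -1/\underline{m}(x)$, noting that $\bSigma$ and $\bSigma_{1P}$ share the same $\underline{m}$ since they differ only in $K$ eigenvalues) yields the asserted limit after a short algebraic reduction. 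Polynomial concentration of the trace around this limit follows from a Burkholder-type martingale moment inequality applied column-by-column on $\bX_k$, where each martingale difference contributes $O(n^{-1})$ thanks to the Sherman--Morrison update formula and the boundedness of $\|\bD_k^{-1}\|$ on $\mathcal{B}_{1k}$.

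For $\mathcal{B}_{2k}$, since $\bx_k$ is independent of $\bD_k$ and $\|\tilde{\bSigma}_1(\bD_k^\intercal)^{-1}\|=O(1)$ on $\mathcal{B}_{1k}$, the moment inequality \eqref{lar11} applied with a sufficiently large exponent $d$ gives $E[|\bx_k^\intercal\tilde{\bSigma}_1(\bD_k^\intercal)^{-1}\bx_k-\frac{1}{n}\mathrm{tr}\,\tilde{\bSigma}_1(\bD_k^\intercal)^{-1}|^d\cdot I(\mathcal{B}_{1k})]=O(n^{-d/2})$, so that Markov's inequality combined with $\mathcal{B}_{3k}$ yields $\mathcal{B}_{2k}$ with probability at least $1-O(n^{-l})$. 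The main technical obstacle is the polynomial-rate bound for $\mathcal{B}_1$: the almost-sure edge convergence is classical, but upgrading it to an $n^{-l}$ large-deviation estimate under only the fourth-moment hypothesis of Assumption 1 is delicate and is precisely why the preceding truncation step is carried out before the main arguments begin.
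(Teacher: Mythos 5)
Your proposal is correct and follows essentially the same route as the paper's proof: the norm events $\mathcal{B}_1,\mathcal{B}_{1k},\mathcal{B}_{1jk}$ are handled by locating $\psi_n(\alpha)$ outside the support and appealing to the no-eigenvalue-outside-the-support estimates under the $\eta_n n^{1/4}$ truncation, the trace events by a Burkholder martingale bound together with identification of the limit $1+1/(\theta\underline{m}(\theta))$, the quadratic-form events by \eqref{lar11} and Markov's inequality, and finally a union bound over the $O(n^2)$ events. The only cosmetic difference is that you identify the limiting trace directly from the Silverstein equation \eqref{23ato}, whereas the paper obtains \eqref{rg42m} by combining Lemma 3.3 of \cite{bai1999exact} with the dominated convergence theorem.
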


This lemma ensures that it suffices to establish CLT of $\bw_1^{\intercal}\bX\bA^{-1}\bX^{\intercal}\bw_1 I(\mathcal{B})$. 
When the event $\mathcal{B}$ holds the terms $\alpha_k$, $\alpha_{jk}$, $\bar{\alpha}_k$ and $\bar{\alpha}_{jk}$ defined in (\ref{noset}) are bounded.  
We remark here that the more accurate definition of $a_n$ should be $$a_n = \frac{1}{1-\frac{1}{n}E tr\tilde{\bSigma}_1(\bD_1^\intercal)^{-1}I(\mathcal{B}_1)},$$ which is bounded for sufficient large $n$, see \eqref{rg42m} in the proof of Lemma \ref{Evbhighprob}.  The definition of $a_n$ in \eqref{noset} is just for notational simplicity. Another important fact of $a_n$  is  \begin{eqnarray}\label{anlim} \lim_{n\rightarrow \infty} a_n\rightarrow \frac{\psi(\alpha)}{\alpha}.
\end{eqnarray}
This is because we have $a_n \rightarrow -\theta \underline{m}(\theta)$ as $n\rightarrow \infty$, see \eqref{rg42m}. Recall that $\theta=\psi(\alpha)$. By the fact that $\psi$ is the inverse function of $\alpha: x \mapsto -1/\underline{m}(x)$, we have $\underline{m}(\theta)= -{1}/{\alpha}$. The above comment for $a_n$ also applies to $a_{1n}$, and the limit of $a_{1n}$ is also $ \theta \underline{m}(\theta) $. 

Let $E_0(\cdot)$ denote expectation, and $E_k (\cdot)$ denote the conditional expectation with respect to $\sigma$-field generated by $\bx_1,\cdots,\bx_k$.
We have
\begin{equation}\label{86jon}\begin{aligned} &\sqrt{n}\{\bw_1^{\intercal}\bX \bA^{-1}\bX^{\intercal}\bw_1 I(\mathcal{B})- E \bw_1^{\intercal}\bX \bA^{-1}\bX^{\intercal}\bw_1I(\mathcal{B}) \} \\& =
\sqrt{n}\sum_{k=1}^{n}(E_k-E_{k-1}) \{\bw_1^{\intercal}\bX \bA^{-1}\bX^{\intercal}\bw_1I(\mathcal{B})\}
\\&=\sqrt{n}\sum_{k=1}^{n} (E_k-E_{k-1}) \{\bw_1^{\intercal}\bX \bA^{-1}\bX^{\intercal}\bw_1I(\mathcal{B})  -\bw_1^{\intercal}\bX_k \bA_k^{-1}\bX_k^{\intercal}\bw_1I(\mathcal{B}_{1k}) \}
\\ &=\sqrt{n}\sum_{k=1}^{n} (E_k-E_{k-1}) \{\bw_1^{\intercal}\bX \bA^{-1}\bX^{\intercal}\bw_1I(\mathcal{B})  -\bw_1^{\intercal}\bX_k \bA_k^{-1}\bX_k^{\intercal}\bw_1I(\mathcal{B})\}+o_p(1)
 \\&=\sqrt{n}\sum_{k=1}^{n} (E_k-E_{k-1})\{\alpha_k \bx_k^{\intercal}B_k \bx_k I(\mathcal{B})\}+o_p(1),
\end{aligned}\end{equation}
where the last step uses the fact that by (\ref{s1jag}) and (\ref{s2jag})
\begin{eqnarray}\label{gn83g}
\begin{aligned}
&\bw_1^{\intercal}\bX \bA^{-1}\bX^{\intercal}\bw_1 -  \bw_1^{\intercal}\bX_k \bA_k^{-1}\bX_k^{\intercal}\bw_1\\
&=\bw_1^\intercal(\bX-\bX_k)\bA^{-1}\bX^\intercal \bw_1 + \bw_1^\intercal \bX_k(\bA^{-1}-\bA_k^{-1})\bX^\intercal \bw_1+ \bw_1^\intercal \bX_k \bA_k^{-1}(\bX^\intercal-\bX_k^\intercal) \bw_1\\
&=\bw_1^\intercal \bx_k \be_k^\intercal \bA^{-1} \bX^\intercal \bw_1+\bw_1^\intercal \bX_k \bA_k^{-1}(\bX^\intercal \btSigma_1 \bX- \bX_k^\intercal \btSigma_1 \bX_k)\bA^{-1}\bX^\intercal \bw_1
\\&=\bw_1^\intercal \bx_k\bx_k^\intercal \bD_k^{-1} \bw_1\alpha_k +\bw_1^\intercal \bX_k\bA_k^{-1}(\be_k \bx_k^\intercal\btSigma_1 \bX+\bX_k^\intercal \btSigma_1 \bx_k\be_k^\intercal)\bA^{-1}\bX^\intercal \bw_1
\\&=\bw_1^\intercal \bx_k\bx_k^\intercal \bD_k^{-1} \bw_1\alpha_k +\bw_1^\intercal \bX_k\bX_k^\intercal \btSigma_1  ({\bD_k^\intercal})^{-1}\bx_k\be_k^\intercal \bA^{-1}\bX^\intercal \bw_1
\\&=\bw_1^\intercal  ({\bD_k^\intercal})^{-1} \bx_k\bx_k^\intercal \bD_k^{-1} \bw_1\alpha_k=\alpha_k \bx_k^{\intercal}\bB_k \bx_k .
\end{aligned}
\end{eqnarray}

Note that $\alpha_k =\bar{\alpha}_k  + \bar{\alpha}_k^2 \gamma_k+\bar{\alpha}_k^2\gamma_k^2\alpha_k $.
It follows that
\begin{eqnarray}\label{jjn73}\begin{aligned}
\eqref{86jon}&=\sqrt{n}\sum_{k=1}^{n} (E_k-E_{k-1}) \big{[} (\bar{\alpha}_k  + \bar{\alpha}_k^2 \gamma_k+\bar{\alpha}_k^2 \gamma_k^2\alpha_k)(\delta_k+\frac{1}{n}trB_k)I(\mathcal{B})\big{]}\\&= \sqrt{n}\sum_{k=1}^{n}E_k [(\bar{\alpha}_k\delta_k + \frac{1}{n}\bar{\alpha}_k^2\gamma_k trB_k) I(\mathcal{B}_{1k}\mathcal{B}_{3k})] \\&\relphantom{EEEEEE}
+ \sqrt{n}\sum_{k=1}^{n} (E_k-E_{k-1})[( \bar{\alpha}_k^2 \gamma_k\delta_k +  \bar{\alpha}_k^2 \gamma_k^2\alpha_kx_k^{\intercal}B_k \bx_k)I(\mathcal{B})]+o_p(1),
 \end{aligned}\end{eqnarray}
 where in the second equality, we use $E_{k-1}\{\bar{\alpha}_k\delta_k  I(\mathcal{B})\} = E_{k-1}\{\bar{\alpha}_k\delta_k  I(\mathcal{B}_{1k}\mathcal{B}_{3k})\}+o_p(n^{-2})=o_p(n^{-2})$, and similarly, $E_{k-1}  \frac{1}{n}\bar{\alpha}_k^2\gamma_k trB_k  I(\mathcal{B})=E_{k-1}  \frac{1}{n}\bar{\alpha}_k^2\gamma_k trB_k  I(\mathcal{B}_{1k}\mathcal{B}_{3k})+o_p(n^{-2}) = o_p(n^{-2}) $.
 We below omit the indicator functions such as $I(\mathcal{B}), I(\mathcal{B}_{1k})$ for simplicity, but one should bear in mind that a suitable indicate function of events is needed whenever handling the inverses of random matrices.

 Using the Burkholder inequality, (\ref{lar11}) and (\ref{lar22}),  we have \begin{equation}\label{234yh} \begin{aligned} E|\sqrt{n}\sum_{k=1}^{n} (E_k-E_{k-1}) \gamma_k\delta_k|^{2}  \leq Cn^{2}(E|\gamma_k|^4)^{\frac{1}{2}}(E|\delta_k|^4)^{\frac{1}{2}} = o(1).  \end{aligned}\end{equation}
By similar arguments, together with the fact that $\bar{\alpha}_k$ and $\alpha_k$ are bounded, we have \begin{equation}\begin{aligned}\sqrt{n}\sum_{k=1}^{n} (E_k-E_{k-1})(\bar{\alpha}_k^2 \gamma_k^2\alpha_k\bx_k^{\intercal}\bB_k \bx_k) =o_p(1),\\
\sqrt{n}\sum_{k=1}^{n} E_k (\frac{1}{n}\bar{\alpha}_k^2\gamma_k tr\bB_k)=o_p(1).\end{aligned}\end{equation}
Therefore we only need to consider
$\sqrt{n}\sum_{k=1}^{n}E_k \bar{\alpha}_k\delta_k = \sqrt{n}\sum_{k=1}^{n}(E_k-E_{k-1}) (\bar{\alpha}_k \delta_k) = \sqrt{n} \sum_{k=1}^{n} (E_k-E_{k-1}) \big{[} (\bar{\alpha}_k-a_n)\delta_k \big{]}+\sqrt{n}a_n \sum_{k=1}^{n}E_k \delta_k$. Similarly to (\ref{234yh}), it is easy to get
\begin{equation*}E|\sqrt{n}\sum_{k=1}^{n} (E_k-E_{k-1}) [(\bar{\alpha}_k-a_n)\delta_k]|^{2 } = o(1).
\end{equation*}
Summarizing the above we conclude that
\begin{equation}\label{gnvb9}\sqrt{n}(\bw_1^{\intercal}\bX\bA^{-1}\bX^{\intercal}\bw_1- E \bw_1^{\intercal}\bX \bA^{-1}\bX^{\intercal}\bw_1) = \sqrt{n}\sum_{k=1}^{n}a_nE_k (\delta_k)+o_p(1).
\end{equation}

 Let $Y_{k}=E_k\delta_k = (E_k-E_{k-1})\delta_k$. By the fact that $a_n$ is bounded we obtain
 \begin{equation}\label{e7gab}
 \sum_{k=1}^n  a_n^2 E \left(nY_k^2 I{(|\sqrt{n}Y_k| \geq \epsilon)}\right) \leq  \frac{C}{\epsilon^2} \sum_{k=1}^n E|\sqrt{n}Y_k|^4  \leq  \frac{Cn^2}{\epsilon^2} \sum_{k=1}^n E|\delta_k|^4 =o(1),
  \end{equation}
 where in the last step we use \eqref{lar22}  and $\eta_n\to 0$.
 By Lemma \ref{Lem4.2} it suffices to verify
\begin{equation} \label{020720}\sum_{k=1}^{n}a_n^2E_{k-1}(nY_k^2)\stackrel{i.p.}\longrightarrow \sigma^2.\end{equation}
It follows from \eqref{543ah} that\begin{equation}\label{347bn}\begin{aligned}&n\sum_{k=1}^{n}E_{k-1}(Y_k^2) = \frac{1}{n}(\gamma_4-3)\sum_{k=1}^n\sum_{i=1}^{p}(E_k(\bB_k)_{ii})^2+\frac{2}{n}\sum_{k=1}^ntr(E_k\bB_k)^2.\end{aligned}\end{equation}
It suffices to find the limits of
\begin{equation}\label{aaa11}
\frac{1}{n}\sum_{k=1}^n\sum_{i=1}^{p}\Big(\be_i^\intercal E_k \bD_k^{-1}\bw_1\bw_1^{\intercal} ({\bD_k^\intercal})^{-1}\be_i \Big)^2,
\end{equation}
and\begin{equation}\label{bbbbb}\frac{2}{n}\sum_{k=1}^ntr\Big(E_k\bD_k^{-1}\bw_1\bw_1^{\intercal} ({\bD_k^\intercal})^{-1}\Big)^2.\end{equation}

First, we deal with  (\ref{aaa11}).  Write \begin{equation}\label{aaa33}\begin{split}&\be_i^\intercal(\bD_k^{-1}-E\bD_k^{-1})\bw_1\bw_1^\intercal (\bD_k^{\intercal})^{-1}\be_i\\&=\be_i^\intercal(\bD_k^{-1}-E\bD_k^{-1})\bw_1\bw_1^\intercal \big((\bD_k^{\intercal})^{-1}-E(\bD_k^{\intercal})^{-1}\big)\be_i +\be_i^\intercal(\bD_k^{-1}-E\bD_k^{-1})\bw_1\bw_1^\intercal E(\bD_k^{\intercal})^{-1}\be_i  \\&= \sum_{j_1,j_2 \neq k}\be_i^\intercal(E_{j_1}\bD_k^{-1}-E_{j_1-1}\bD_k^{-1})\bw_1 \bw_1^\intercal (E_{j_2}\bD_k^{-1}-E_{j_2-1}\bD_k^{-1})\be_i \\&\relphantom{=}+\sum_{j \neq k}\be_i^\intercal(E_{j}\bD_k^{-1}-E_{j-1}\bD_k^{-1})\bw_1 \bw_1^\intercal E(\bD_k^{\intercal})^{-1}\be_i .\end{split}\end{equation}
By the Burkholder inequality, \eqref{maeq1}, \eqref{lar33} and \eqref{aaa33}, we have
\begin{align}\label{lph0y}\begin{split}& \left[E \left| \sum_{i=1}^p E_k \be_i^\intercal(\bD_k^{-1}-E\bD_k^{-1})\bw_1\bw_1^\intercal \bD_k^{\intercal -1}\be_i\times \be_i^\intercal E_k\bD_k^{-1}\bw_1\bw_1^\intercal \bD_k^{\intercal -1} \be_i\right|\right]^2\\
&\leq \sum_{i=1}^{p}E |\be_i^\intercal(\bD_k^{-1}-E\bD_k^{-1})\bw_1\bw_1^\intercal \bD_k^{\intercal -1}\be_i|^2 \sum_{i=1}^{p}E|\be_i^\intercal \bD_k^{-1}\bw_1\bw_1^\intercal \bD_k^{\intercal -1}\be_i|^2 \\
& \leq C\sum_{i=1}^p \left[ E \left| \sum_{j_1\neq k}\alpha_{kj_1}\be_i^\intercal (E_{j_1}-E_{j_1-1})\bD_{kj_1}^{-1}\tilde\bSigma_1\bx_{j_1}\bx_{j_1}^\intercal \bD_{kj_1}^{-1}\bw_1 \right|^4\right]^{1/2}\\& \relphantom{\leq}\times\left[ E \left| \sum_{j_2\neq k}\alpha_{kj_2}\be_i^\intercal (E_{j_2}-E_{j_2-1})\bD_{kj_2}^{-1}\tilde\bSigma_1\bx_{j_2}\bx_{j_2}^\intercal \bD_{kj_2}^{-1}\bw_1 \right|^4\right]^{1/2} \\&\relphantom{\leq} + C\sum_{i=1}^p |\bw_1^\intercal E\bD_k^{\intercal -1}\be_i|^2 E \left| \sum_{j\neq k}\alpha_{kj}\be_i^\intercal (E_{j}-E_{j-1})\bD_{jk}^{-1}\tilde\bSigma_1\bx_{j}\bx_{j}^\intercal \bD_{jk}^{-1}\bw_1 \right|^2 \\& = O(n^{-1}),\end{split}\end{align}
where the second inequality uses \begin{equation}\begin{aligned}
&\sum_{i=1}^{n}E|\be_i^\intercal \bD_k^{-1}\bw_1\bw_1^\intercal \bD_k^{\intercal -1}\be_i|^2\leq E| \sum_{i=1}^{n} \bw_1^\intercal \bD_k^{\intercal -1}\be_i\be_i^\intercal \bD_k^{-1}\bw_1|^2 = E|\bw_1^\intercal \bD_k^{\intercal-1}\bD_k \bw_1|^2 = O(1).
\end{aligned}\end{equation}
\eqref{lph0y} implies \begin{equation}\begin{aligned}\frac{1}{n}\sum_{k=1}^n\sum_{i=1}^{p}(E_k\be_i^\intercal ( \bD_k^{-1}-E\bD_k^{-1})\bw_1\bw_1^{\intercal}\bD_k^{\intercal -1}\be_i)\times \be_i^\intercal E_k\bD_k^{-1}\bw_1\bw_1^\intercal \bD_k^{\intercal -1} \be_i\stackrel{i.p.}\rightarrow 0.\end{aligned}\end{equation}
Similarly, we have$$\frac{1}{n}\sum_{k=1}^n\sum_{i=1}^{p}\be_i^\intercal E\bD_k^{-1} \bw_1\bw_1^{\intercal}E_k(\bD_k^{\intercal -1}-E\bD_k^{\intercal -1})\be_i\times \be_i^\intercal E_k\bD_k^{-1}\bw_1\bw_1^\intercal \bD_k^{\intercal -1} \be_i\stackrel{i.p.}\rightarrow 0.$$
Therefore$$\frac{1}{n}\sum_{k=1}^n\sum_{i=1}^{p}\big (\be_i^\intercal E_k \bD_k^{-1}\bw_1\bw_1^\intercal \bD_k^{\intercal -1}\be_i -\be_i^\intercal E \bD_k^{-1}\bw_1\bw_1^\intercal E \bD_k^{\intercal -1}\be_i \big)\times \be_i^\intercal E_k\bD_k^{-1}\bw_1\bw_1^\intercal \bD_k^{\intercal -1} \be_i  \stackrel{i.p.}\rightarrow 0.$$

By similar arguments, we have$$\frac{1}{n}\sum_{k=1}^n\sum_{i=1}^{p}\be_i^\intercal E \bD_k^{-1}\bw_1\bw_1^\intercal E \bD_k^{\intercal -1}\be_i \times \be_i^\intercal (E_k\bD_k^{-1}\bw_1\bw_1^\intercal \bD_k^{\intercal -1}- E \bD_k^{-1}\bw_1\bw_1^\intercal E \bD_k^{\intercal -1})\be_i \stackrel{i.p.} \rightarrow 0.$$
Thus to find the limit of (\ref{aaa11}), it is equivalent to considering the limit of \begin{equation}\label{47sog}\frac{1}{n}\sum_{k=1}^n\sum_{i=1}^{p}(\be_i^\intercal E \bD_k^{-1}\bw_1\bw_1^\intercal E \bD_k^{\intercal -1}\be_i )^2.\end{equation}
Let \begin{eqnarray}\label{defT1}\bT= \bI - E\frac{1}{n}\sum_{k=2}^n\alpha_{1k}\tilde\bSigma_1. \end{eqnarray}
Using \eqref{32ndr} and the dominated convergence theorem,  it is easy to verify that
$$\lim_{n\rightarrow \infty} E\frac{1}{n}\sum_{k=2}^n\alpha_{1k}\rightarrow -\theta \underline{m}(\theta) =\frac{\psi(\alpha)}{\alpha}.$$ Since $\alpha$ has a positive distance to the support of $\bSigma_{1P}$, $\bT$ is invertible for large $n$.\\
Write
\begin{equation}\begin{aligned}\label{176af}
&E(\be_i^\intercal \bD_1^{-1}\bw_1)-\be_i^\intercal \bT^{-1}\bw_1\\
&=E\big[\be_i^\intercal \bT^{-1}(\sum_{j\geq2}\tilde\bSigma_1\bx_j\bx_j^\intercal-E\frac{1}{n}\sum_{j\geq2}\alpha_{1j}\tilde\bSigma_1)\bD_1^{-1}\bw_1\big]\\
&=\sum_{j\geq2}E\big[\alpha_{1j}\bx_j^\intercal \bD_{1j}^{-1}\bw_1\be_i^\intercal \bT^{-1}\tilde\bSigma_1\bx_j-\frac{E\alpha_{1j}}{n}\be_i^\intercal \bT^{-1}\tilde\bSigma_1\bD_1^{-1}\bw_1\big]\\
&=A_1+A_2+A_3,
\end{aligned}\end{equation}
where\begin{equation}\label{26ago}\begin{aligned}
&A_1= \sum_{j\geq2}E\big((\alpha_{1j}-\bar{\alpha}_{1j}) (\bx_j^\intercal \bD_{1j}^{-1}\bw_1e_i^\intercal \bT^{-1}\tilde\bSigma_1\bx_j-\frac{1}{n}\be_i^\intercal \bT^{-1}\tilde\bSigma_1\bD_{1j}^{-1}\bw_1)\big),\\
&A_2=\sum_{j\geq2}E \big(\frac{1}{n}\alpha_{1j}\be_i^\intercal \bT^{-1}\tilde\bSigma_1(\bD_{1j}^{-1}-\bD_1^{-1})\bw_1\big),\\
&A_3=\sum_{j\geq2}E \big(\frac{1}{n}\alpha_{1j}\be_i^\intercal \bT^{-1}\tilde\bSigma_1(\bD_{1}^{-1}-E\bD_1^{-1})\bw_1\big).
\end{aligned}\end{equation}

We prove $A_1 =O(n^{-1})$ first. Using  $ \alpha_{1j} -\bar{\alpha}_{1j}  = \bar{\alpha}_{1j}^2 \gamma_{1j}+\bar{\alpha}_{1j}^2\gamma_{1j}^2\alpha_{1j}$, we can write $A_1=A_{11}+A_{12}$, where
\begin{equation}\begin{aligned}
&A_{11}= \sum_{j\geq2}E\bar{\alpha}_{1j}^2\gamma_{1j}(\bx_j^\intercal \bD_{1j}^{-1}\bw_1e_i^\intercal \bT^{-1}\tilde\bSigma_1x_j-\frac{1}{n}\be_i^\intercal \bT^{-1}\tilde\bSigma_1\bD_{1j}^{-1}\bw_1),\\
&A_{12}= \sum_{j\geq2}E\alpha_{1j}\bar{\alpha}_{1j}^2\gamma_{1j}^2(\bx_j^\intercal \bD_{1j}^{-1}\bw_1e_i^\intercal \bT^{-1}\tilde\Sigma_1x_j-\frac{1}{n}\be_i^\intercal \bT^{-1}\tilde\Sigma_1\bD_{1j}^{-1}\bw_1).
\end{aligned}\end{equation}
\indent Using (\ref{543ah}), we obtain\begin{equation}\begin{aligned}\label{458ag}
&E[\bar{\alpha}_{1j}^2\gamma_{1j}(\bx_j^\intercal \bD_{1j}^{-1}\bw_1\be_i^\intercal \bT^{-1}\tilde\bSigma_1\bx_j-\frac{1}{n}\be_i^\intercal \bT^{-1}\tilde\bSigma_1\bD_{1j}^{-1}\bw_1)|\bX_j]\\
&\leq\frac{C}{n^2}\sum_{k=1}^{p}(\tilde\bSigma_1 \bD_{1j}^{-1})_{kk}( \bD_{1j}^{-1}\bw_1\be_i^\intercal \bT^{-1}\tilde\Sigma_1)_{kk}+C\frac{tr(\tilde\bSigma_1 \bD_{1j}^{-2}\bw_1e_i^\intercal \bT^{-1}\tilde\bSigma_1)}{n^2}\\
&+\frac{tr(\tilde\bSigma_1 \bD_{1j}^{-1}\tilde\bSigma_1^\intercal \bT^{-1}\be_i\bw_1^\intercal \bD_{1j}^{\intercal-1})}{n^2}.
\end{aligned}\end{equation}
The first summation is bounded by \begin{equation}\begin{aligned}
&(\sum_{k=1}^p|\be_k^\intercal \bD_{1j}^{-1}\bw_1|^2)^{1/2}(\sum_{k=1}^p|\be_i^\intercal \bT^{-1}\tilde\bSigma_1\be_k\be_k^\intercal\tilde\bSigma_1 \bD_{1j}^{-1}\be_k|^2)^{1/2}\\
&\leq ||\bD_{1j}^{-1}||^2 ||\tilde\bSigma_1||^2||\bT^{-1}|| <C.
\end{aligned}\end{equation}\\
Thus the first term in (\ref{458ag}) is $O(n^{-2})$. By similar but easier arguments, the second and third term also have bounds of the same order, so that we can conclude that\begin{equation} |A_{11}|=O(n^{-1}).\end{equation}
\indent For $A_{12}$, using \eqref{lar11} and \eqref{lar22}, we have
\begin{equation}\begin{aligned}|A_{12}|&\leq \sum_{j\geq2}(E|\alpha_{1j}\bar{\alpha}_{1j}^2\gamma_{1j}^2|^2)^{1/2} (E|\bx_j^\intercal \bD_{1j}^{-1}\bw_1\be_i^\intercal \bT^{-1}\tilde\bSigma_1\bx_j-\frac{1}{n}\be_i^\intercal \bT^{-1}\tilde\bSigma_1\bD_{1j}^{-1}\bw_1|^2)^{1/2}\\&= O(n^{-1}).
\end{aligned}\end{equation}
Thus \begin{equation}\label{A1qwe}|A_1| = O(n^{-1}). \end{equation}

Consider the terms $A_2$ and $A_3$ now in (\ref{26ago}). It follows from \eqref{maeq1}, \eqref{lar22}, \eqref{lar33} and the Burkholder inequality that
\begin{equation}\label{A2qwe}\begin{aligned}
|A_2|&= |\sum_{j\geq2}E (\frac{1}{n}\alpha_{1j}^2 \be_i^\intercal \bT^{-1}\tilde\bSigma_1 \bD_{1j}^{-1}\tilde\bSigma_1 \bx_j\bx_j^\intercal \bD_{1j}^{-1} \bw_1)|=O(n^{-1})\\
|A_3|&=|\sum_{j\geq2}E [\frac{1}{n}(\alpha_{1j}-a_n)\be_i^\intercal \bT^{-1}\tilde\bSigma_1(\bD_{1}^{-1}-E\bD_1^{-1})\bw_1]|\\
&=|\sum_{j\geq2}E [\frac{1}{n}(\alpha_{1j}-a_n)\be_i^\intercal \bT^{-1}\tilde\bSigma_1\big{(}\sum_{k=2}^n (E_k -E_{k-1})(\bD_{1}^{-1}-\bD_{1k}^{-1})\big{)}\bw_1]|\\
&\leq \frac{1}{n}\sum_{j\geq2}(E|\alpha_{1j}-a_n|^2)^{1/2}(E|\sum_{k=1}^n (E_k -E_{k-1})\be_i^\intercal \bT^{-1}\tilde\bSigma_1 \bD_{1k}^{-1}\tilde\bSigma_1 \bx_k\bx_k^\intercal \bD_{1k}^{-1}\bw_1|^2)^{1/2}\\
&=O(n^{-1}).
\end{aligned}\end{equation}
 From (\ref{176af}),(\ref{A1qwe}),(\ref{A2qwe}) and $\bw_1^\intercal \bU_2=0$ we have \begin{equation}\label{348cn}\begin{aligned} E(\be_i^\intercal \bD_1^{-1}\bw_1) &= \be_i^\intercal \bT^{-1}\bw_1 + O(n^{-1}) \\&= \be_i^\intercal \big(\bI +\bT^{-1}E\frac{1}{n}\sum_{k=2}^n\alpha_{1k}\tilde\Sigma_1\big) \bw_1+ O(n^{-1})  \\ &=w_{1i}+O(n^{-1}).\end{aligned}\end{equation}
Substituting these back into (\ref{aaa11}) we conclude that (\ref{47sog}) asymptotically equals $$(1+o(1))\sum_{i=1}^p w_{1i}^4.$$

For (\ref{bbbbb}),  it has the similar form to equation (4.7) in \cite{bai2007asymptotics}.
 Hence rewrite \eqref{bbbbb} as
\begin{equation}\label{236ho} \begin{aligned}
&\frac{2}{n}\sum_{k=1}^n E_{k-1}tr E_k (\bD_k^{-1}\bw_1\bw_1^\intercal \bD_k^{\intercal-1}) E_k (\bD_k^{-1}\bw_1\bw_1^\intercal \bD_k^{\intercal-1})\\
&=\frac{2}{n}\sum_{k=1}^n E_{k-1}(\bw_1^\intercal \bD_k^{\intercal-1} \breve{\bD}_k^{-1}\bw_1\bw_1^\intercal \breve {\bD}_k^{\intercal-1} \bD_k^{-1}\bw_1),
\end{aligned}\end{equation}
where $\breve{\bD}_k^{-1}$ is defined similarly as $\bD_k^{-1}$ by $(\bx_1,\cdots, \bx_{k-1},\breve{\bx}_{k+1},\cdots,\breve{\bx}_{n})$ and where $\breve{\bx}_{k+1}, \cdots, \breve{\bx}_{n}$ are i.i.d copies of $\bx_{k+1},\cdots,\bx_n$.
Following the argument similar to (4.7)-(4.22) of their work, we can obtain\begin{equation}\label{47agn} \begin{split}
E_{k-1}&(\bw_1^\intercal \bD_k^{\intercal-1} \breve{\bD}_k^{-1}\bw_1\bw_1^\intercal \breve {\bD}_k^{\intercal-1} \bD_k^{-1}\bw_1)\times [1 - \frac{k-1}{n} a_{1n}^2 \frac{1}{n}tr \tilde{\bSigma}_1 \bT^{-1} \tilde{\bSigma}_1 \bT^{-1}]\\
&= (\bw_1^\intercal \bT^{-2} \bw_1)^2[1+\frac{k-1}{n} a_{1n}^2 \frac{1}{n} E_{k-1} tr \bD_k^{-1}\tilde{\bSigma}_1 \breve{\bD}_k^{-1} \tilde{\bSigma}_1] +o_p(1).
\end{split}\end{equation}
Note that $\bw_1^\intercal \bT^{-2} \bw_1=1$ as in (\ref{348cn}).
Combining \eqref{maeq2} with (2.18) of \cite{bai2004} we have \begin{eqnarray}\label{m40b0}
E_{k-1}tr \bD_k^{-1}\tilde{\bSigma}_1 \breve{\bD}_k^{-1} \tilde{\bSigma}_1=\frac{tr\tilde{\bSigma}_1 \bT^{-1} \tilde{\bSigma}_1 \bT^{-1}+o_p(1)}{1-\frac{k-1}{n^2} \theta^2 (\underline{m}(\theta))^2 tr\tilde{\bSigma}_1 \bT^{-1} \tilde{\bSigma}_1 \bT^{-1}}
\end{eqnarray}
Recall that  $a_{1n} \rightarrow -\theta\underline{m}(\theta)$,  $\underline{m}(\theta) \rightarrow \underline{m}(\psi(\alpha)) = -\alpha^{-1}$, and $F^{\bSigma_{1P}}\rightarrow H$. Hence
\begin{equation}\label{43ahn}\begin{aligned}d:&= \lim \frac{a_{1n}^2}{n} tr   \tilde{\bSigma}_1 \bT^{-1} \tilde{\bSigma}_1 \bT^{-1}\\&=  \underline{ m}^2 (\theta) \int \frac{ct^2}{(1+t\underline{m}(\theta))^2} dH(t)\\&= \int \frac{ct^2}{(\alpha-t)^2}dH(t)\\&=1-\psi^{\prime}(\alpha).
\end{aligned}\end{equation}
By (\ref{47agn}) and (\ref{43ahn}), and similar argument to \eqref{348cn}, we obtain
\begin{equation}\label{378qw}\begin{aligned}
(\ref{236ho})&\rightarrow 2 (\bw_1^\intercal \bT^{-2} \bw_1)^2 (\int_0^1 \frac{1}{1-td}dt +\int_0^1 \frac{td}{(1-td)^2}dt )\\&=\frac{2}{1-d} = \frac{2}{\psi^{\prime}(\alpha)}.
\end{aligned}\end{equation}
Consequently, from (\ref{347bn})-(\ref{378qw}), and $a_n  \rightarrow \psi(\alpha)/\alpha$, we conclude that
 \begin{equation}\sum_{k=1}^{n}a_n^2E_{k-1}(nY_k^2)\rightarrow \frac{\psi^2(\alpha)}{\alpha^2} [(\gamma_4-3) \sum_{i=1}^p w_{1i}^4 +\frac{2}{\psi^{\prime}(\alpha)}] .\end{equation}

\textbf{Calculation of the mean.}
We next show that \begin{equation}\label{3o2c6} \sqrt{n} \bigg{(}Ew_1^{\intercal}\bX \bA^{-1}\bX^{\intercal}\bw_1-\frac{\psi_n(\alpha)}{\alpha}\bigg) \rightarrow 0\end{equation}
Let $\bX^0 = (\bx_1^0, \cdots, \bx_n^0)$  be a $p\times n$ matrix with entries consisting of i.i.d. Gaussian variables with mean 0 and variance $1/n$. Denote $\bA^{0} = \bI-(\bX^0)^\intercal\tilde{\bSigma}_1\bX^0.$ We finish (\ref{3o2c6}) by verifying that
\begin{equation}\label{36ahp}\begin{aligned}
\sqrt{n}E \Big(\bw_1^{\intercal}\bX\bA^{-1}\bX^{\intercal}\bw_1 -\bw_1^{\intercal}\bX^0 (\bA^0)^{-1}(\bX^0)^{\intercal}\bw_1\Big) \rightarrow 0,
\end{aligned}\end{equation}and
\begin{equation}\label{37heh} \sqrt{n}E\Big(\bw_1^{\intercal}\bX^0(\bA^0)^{-1}(\bX^0)^{\intercal}\bw_1-\frac{\psi_n(\alpha)}{\alpha}\Big)\rightarrow 0.
\end{equation}

Let \begin{eqnarray} \begin{aligned}
&\bZ_{k}^{1}=\sum_{i=1}^k \bx_i \be_i^\intercal+\sum_{i=k+1}^n \bx_i^0 \be_i^{\intercal},\quad  \bZ_{k}^{0}=\sum_{i=1}^{k-1} \bx_i \be_i^\intercal+\sum_{i=k}^n \bx_i^0 \be_i^{\intercal}, \\&\bZ_{k}=\sum_{i=1}^{k-1} \bx_i \be_i^\intercal+\sum_{i=k+1}^n \bx_i^0 \be_i^{\intercal}, \quad \hat{\bA}_k^1=\bI-\frac{1}{n}(\bZ_k^1)^{\intercal}\tilde{\bSigma}_1 \bZ_k^1,\\
& \hat{\bA}_k^0=\bI-\frac{1}{n}(\bZ_k^0)^{\intercal}\tilde{\bSigma}_1 \bZ_k^0,\quad  \hat{\bA}_k=\bI-\frac{1}{n}(\bZ_k)^{\intercal}\tilde{\bSigma}_1 \bZ_k.\quad
\hat{\bD}_k^1 = \bI-\tilde{\bSigma}_1\bZ_k^1(\bZ_k^{1})^\intercal, \\
& \quad  \hat{\bD}_k^0 = \bI-\tilde{\bSigma}_1\bZ_k^0(\bZ_k^0)^\intercal, \quad \hat{\bD}_k = \bI-\tilde{\bSigma}_1\bZ_k\bZ_k^\intercal, \quad  \alpha_k^{z}=\frac{1}{1-\bx_k^\intercal\tilde{\bSigma}_1 (\hat{\bD}_k^{\intercal})^{-1}\bx_k},\\&\alpha_k^{0z}=\frac{1}{1-\bx_k^{0\intercal}\tilde{\bSigma}_1(\hat{\bD}_k^{\intercal})^{-1}\bx_k^0},\quad
\bar{\alpha}_k^{z}=\frac{1}{1-\frac{1}{n}tr\tilde{\bSigma}_1(\hat{\bD}_k^{\intercal})^{-1}}.
\end{aligned} \end{eqnarray}

We prove \eqref{36ahp} first. To this end introduce the following notation
\begin{eqnarray}\label{270ag}\begin{aligned}
&\sqrt{n}E \Big[\bw_1^{\intercal}\bX\bA^{-1}\bX^{\intercal}\bw_1 -\bw_1^{\intercal}\bX^0 (\bA^0)^{-1}(\bX^0)^{\intercal}\bw_1\Big] \\
&=\sqrt{n}\sum_{k=1}^n E\Big[\bw_1^{\intercal}\bZ_k^1 (\hat{\bA}_k^1)^{-1}(\bZ_k^1)^{\intercal}\bw_1-\bw_1^{\intercal}\bZ_k^0 (\hat{\bA}_k^0)^{-1}(\bZ_k^0)^{\intercal}\bw_1\Big]\\
&=\sqrt{n}\sum_{k=1}^n E\Big[\bw_1^{\intercal}\bZ_k^1 (\hat{\bA}_k^1)^{-1}(\bZ_k^1)^{\intercal}\bw_1-\bw_1^{\intercal}\bZ_k \hat{\bA}_k^{-1}(\bZ_k)^{\intercal}\bw_1\Big] \\
&-\sqrt{n}\sum_{k=1}^n E\Big[\bw_1^{\intercal}\bZ_k^1 (\hat{\bA}_k^0)^{-1}(\bZ_k^1)^{\intercal}\bw_1-\bw_1^{\intercal}\bZ_k \hat{\bA}_k^{-1}(\bZ_k)^{\intercal}\bw_1\Big] \\
&=\sqrt{n} \sum_{k=1}^n E \Big[\alpha_k^z \bx_k^\intercal (\hat{\bD}_k)^{-1}\bw_1\bw_1^\intercal (\hat{\bD}_k^{\intercal})^{-1}\bx_k\Big]-\sqrt{n} \sum_{k=1}^n E \Big[\alpha_k^{0z} (\bx_k^{0})^\intercal (\hat{\bD}_k)^{-1}\bw_1\bw_1^\intercal (\hat{\bD}_k^{\intercal})^{-1}\bx_k^0\Big].
\end{aligned}\end{eqnarray}
For the first summation above, we write \begin{equation}\begin{split}
\sqrt{n} \sum_{k=1}^n E\Big[\alpha_k^z \bx_k^\intercal (\hat{D}_k)^{-1}\bw_1\bw_1^\intercal (\hat{\bD}_k)^{\intercal-1}\bx_k\Big]&=\sqrt{n} \sum_{k=1}^n E\Big[(\alpha_k^z-\bar{\alpha}_k^{z}) \bx_k^\intercal (\hat{\bD}_k)^{-1}\bw_1\bw_1^\intercal (\hat{\bD}_k)^{\intercal-1}\bx_k\Big]  \\&+\sqrt{n} \sum_{k=1}^n E\Big[\bar{\alpha}_k^{z}\bx_k^\intercal (\hat{\bD}_k)^{-1}\bw_1\bw_1^\intercal (\hat{\bD}_k^{\intercal})^{-1}\bx_k\Big].
\end{split}\end{equation}
By the same strategy used in estimation of $A_1$ in (\ref{26ago}), one can get $$\sqrt{n} \sum_{k=1}^n E \Big[(\alpha_k^z-\bar{\alpha}_k^{z}) \bx_k^\intercal (\hat{\bD}_k)^{-1}\bw_1\bw_1^\intercal (\hat{\bD}_k^{\intercal})^{-1}\bx_k \Big]= O(n^{-1/2}).$$
Thus, we have \begin{equation}\label{347hr}\sqrt{n} \sum_{k=1}^n E\Big[\alpha_k^z \bx_k^\intercal (\hat{\bD}_k)^{-1}\bw_1\bw_1^\intercal (\hat{\bD}_k^{\intercal})^{-1}\bx_k\Big] = \frac{1}{\sqrt{n}} \sum_{k=1}^n E\Big[\bar{\alpha}_k^{z}tr\big((\hat{\bD}_k)^{-1}\bw_1\bw_1^\intercal (\hat{\bD}_k^{\intercal})^{-1}\big)\Big] + O(n^{-1/2}).\end{equation}
Similarly, one can get \begin{equation}\label{484ph}\sqrt{n} \sum_{k=1}^n E\Big[\alpha_k^{0z} (\bx_k^{0})^\intercal (\hat{\bD}_k)^{-1}\bw_1\bw_1^\intercal (\hat{\bD}_k^{\intercal})^{-1}\bx_k^0 \Big] =\frac{1}{\sqrt{n}} \sum_{k=1}^n E\Big[\bar{\alpha}_k^{z}tr\big((\hat{\bD}_k)^{-1}\bw_1\bw_1^\intercal (\hat{\bD}_k^{\intercal})^{-1}\big)\Big]+  O(n^{-1/2}).\end{equation}
 (\ref{36ahp}) follows from (\ref{270ag}), (\ref{347hr}) and (\ref{484ph}).\\

Next we consider (\ref{37heh}).  By $\bw_1^\intercal \bU_2 = 0$, we know that \begin{equation}\label{43baq} E \bw_1^{\intercal}\bX^0 (\bA^0)^{-1}(\bX^0)^{\intercal}\bw_1=\frac{1}{n}E tr(\bA^0)^{-1}.   \end{equation}
Denote by $B_\epsilon(\theta)$ the ball with center $\theta$ and radius  $\epsilon$  in the complex plane.  By the strategy used in section 4 of \cite{bai2004}, one can prove that  $$\sup_{z\in B_\epsilon(\theta)}\sqrt{n} |E m_{F^{({\bX^{0})^\intercal} \bSigma_{1P}\bX^0}} (z)-\underline{m}_n(z)| \rightarrow 0.$$
Since $\psi_n(\alpha) \rightarrow \theta$ we have $$\sqrt{n}\Big(E m_{F^{({\bX^{0})^\intercal} \bSigma_{1P}\bX^0}} \{\psi_n(\alpha)\}-\underline{m}_n\{\psi_n(\alpha)\}\Big) = o(1).$$
This implies \begin{eqnarray}\label{8b906}\sqrt{n}\Big({\frac{1}{n}E tr(\bA^0)^{-1}-\frac{\psi_n(\alpha)}{\alpha}}\Big)\rightarrow 0.\end{eqnarray}
We conclude \eqref{37heh} from \eqref{8b906} and \eqref{43baq}.
\qed

\section{Proof of Theorems \ref{Th22} and \ref{Th23}}
\textbf{Proof of Theorem \ref{Th22}}.
First we give an outline of the proof, which is similar to the proof of Theorem 2.2 in \cite{cai2019}. As $n \rightarrow \infty$,  $\lambda_k\stackrel{a.s.} \rightarrow \psi(\alpha_k), 1 \leq k \leq K$,  which lie outside the support of $F^{c_n,H_n}$ by Theorem 4.1 in \cite{bai2012}. 
Moreover, no eigenvalue of  $\bX^\intercal \bSigma_{1P} \bX$ appears in a small interval that lies outside the support of $F^{c_n,H_n}$ and that contains $\psi(\alpha_k)$ almost surely by \cite{bai1998no}.
Therefore, $(\lambda_k \bI-\bX^\intercal \bSigma_{1P}  \bX)$ is invertible with probability 1. Conditional on this event, $\lambda_k$ solves the determinant equation
\begin{equation}
\label{kmv46} det (\bLambda_S^{-1}-\bU_1\bX(\lambda_k \bI-\bX^\intercal \bSigma_{1P} \bX)^{-1}\bX^\intercal \bU_1^\intercal) = 0.
\end{equation}
To prove (\ref{21thm}) we rewrite the normalized largest spike in terms of random quadratic forms (see (\ref{26hae}) below) from the determinant equation.  Then its limiting distribution is governed by the fluctuation of the random quadratic forms. The idea of proving (\ref{22thm}) is similar. 

We prove (\ref{21thm}) first. Define $\bB(x) = xI - \bX^{\intercal}\bSigma_{1P}\bX$ and  $\chi_i =  {(\lambda_i-\theta_i)}/{\theta_i}$. We only prove the central limit theorem for $\chi_1$, and the others can be similarly proved.

Using (\ref{maeq1}) we write \begin{eqnarray}\label{4bndg}\bU_1^{\intercal}\bX\bB^{-1}(\lambda_1)\bX^{\intercal}\bU_1 =\bU_1^{\intercal}\bX\bB^{-1}(\theta_1)\bX^{\intercal}\bU_1-\chi_1\theta_1\bU_1^{\intercal}\bX\bB^{-1}(\lambda_1)\bB^{-1}(\theta_1)\bX^{\intercal}\bU_1.\end{eqnarray}
Denote by $\delta_{ij}$ the Kronecker delta function; i.e. $\delta_{ii}=1$ and $\delta_{ij}=0$ if $i\neq j$.  By Theorem \ref{Th21} we have
\begin{eqnarray}\label{436hr}
&P(\max_{1\leq i,j\leq K}|\frac{\delta_{ij}}{\alpha_1}-\bu_i^\intercal \bX\bB^{-1}(\theta_1)\bX^\intercal \bu_j )|\geq \frac{\epsilon}{\sqrt{n}})\\
&\leq K^2P\Big(\sqrt{n}|\frac{\delta_{ij}}{\alpha_1}-\bu_i^\intercal \bX\bB^{-1}(\theta_1)\bX^\intercal \bu_j )|\geq \epsilon\Big)=O(1).
\end{eqnarray}

It follows that
\begin{equation}\label{46bdf} \begin{aligned} & \bLambda_S^{-1}- \bU_1^{\intercal}\bX\bB^{-1}(\theta_1)\bX^{\intercal}\bU_1
 \\ &={\left[\begin{array}{cccc}{\hat{S}_{n}} & {O_{p}\left(\frac{1}{\sqrt{n}}\right)} & {\cdots} & {O_{p}\left(\frac{1}{\sqrt{n}}\right)} \\ {O_{p}\left(\frac{1}{\sqrt{n}}\right)} & {O_{p}(1)} & {\cdots} & {O_{p}\left(\frac{1}{\sqrt{n}}\right)} \\ {\cdots} & {\cdots} & {O_{p}(1)} & {O_{p}\left(\frac{1}{\sqrt{n}}\right)} \\ {O_{p}\left(\frac{1}{\sqrt{n}}\right)} & {\cdots} & {O_{p}\left(\frac{1}{\sqrt{n}}\right)} & {O_{p}(1)}\end{array}\right]}
\end{aligned} \end{equation}
where  $\hat{S}_n  =\frac{1}{\alpha_1}-\bu_1^{\intercal}\bX\bB^{-1}(\theta_1)\bX^{\intercal}\bu_1$.
Moreover, we have \begin{equation}\label{shh23}\begin{aligned}
&\chi_1\theta_1\bU_1^{\intercal}\bX\bB^{-1}(\lambda_1)\bB^{-1}(\theta_1)\bX^{\intercal}\bU_1\\
&= \left[\begin{array}{cccc} {\chi_1\theta_1 R_1} &  {O_p(\chi_1)} & {\cdots} & {O_p(\chi_1)}  \\  {O_p(\chi_1)} & {O_p(\chi_1)} & {\cdots}  &  {O_p(\chi_1)} \\  {\cdots}&{ \cdots} &{O_p(\chi_1)} & {O_p(\chi_1)}  \\  {O_p(\chi_1)} & {\cdots}& {O_p(\chi_1)}& {O_p(\chi_1)}  \end{array}\right] ,
\end{aligned}\end{equation}
where $R_1 = \bu_1^{\intercal}\bX\bB^{-1}(\lambda_1)\bB^{-1}(\theta_1)\bX^{\intercal}\bu_1.$  By   \eqref{kmv46}, \eqref{4bndg}, \eqref{46bdf}, \eqref{shh23}, the fact that $\chi_1 \stackrel{a.s.} \rightarrow 0,$ and the Leibniz formula for a determinant, one can show that  $$\hat{S}_n +\chi_1\theta_1 R_1 = o_p(\frac{1}{\sqrt{n}}),$$ which implies \begin{equation}\label{26hae} \sqrt{n}\chi_1 = \frac{-\sqrt{n}\hat{S}_n}{\theta_1 R_1}.
 \end{equation}


\indent By Theorem \ref{Th21}, $ -\sqrt{n}\hat{S}_n /\theta_1 $ converges in distribution to a normal distribution with mean $0$ and variance $\{\alpha_1^2 \psi^2(\alpha_1)\}^{-1}[(\gamma_4-3) \sum_{i=1}^p u_{1i}^4 + \frac{2}{\psi^\prime(\alpha)}]$.
To handle $R_1$ in (\ref{26hae}) by (\ref{maeq1}) we further expand it as
  \begin{equation}\label{624dg}R_1= \bu_1^{\intercal}\bX\bB^{-2}(\theta_1)\bX^{\intercal}\bu_1+ (\theta_1-\lambda_1)\bu_1^{\intercal}\bX\bB^{-1}(\lambda_1)\bB^{-2}(\theta_1)\bX^{\intercal}\bu_1.
\end{equation}
It is easy to obtain
$$(\theta_1-\lambda_1)\bu_1^{\intercal}\bX\bB^{-1}(\lambda_1)\bB^{-2}(\theta_1)\bX^{\intercal}\bu_1 \stackrel{i.p.} \rightarrow 0,$$
Next, we handle the first term of $R_1$, that equals $\theta_1^{-2}\bu_1^\intercal \bX \bA^{-2}(\theta_1) \bX^\intercal \bu_1$. Here $\bA(\theta_1)$ is obtained from $\bA$ in (\ref{noset}) with $\psi_n(\alpha)$ replaced by $\theta_1=\psi_n(\alpha_1)$.
Write \begin{equation}\begin{aligned}
&\sqrt{n} (\bu_1^\intercal \bX \bA^{-2}(\theta_1)  \bX^\intercal \bu_1 - E \bu_1^\intercal \bX \bA^{-2}(\theta_1)  \bX^\intercal \bu_1 ) \\
&=\sqrt{n}\sum_{k=1}^n(E_k-E_{k-1}) (I_1 +I_2 +I_3 + I_4)
\end{aligned}\end{equation}
where \begin{equation}\begin{aligned}
&I_1 = -\alpha_k \bx_k^\intercal \bD_k^{-1}\bu_1\bu_1^\intercal  ({\bD_k^\intercal})^{-1}\bx_k,\quad & I_2=\alpha_k \bx_k^\intercal \bD_k^{-1}\bu_1\bu_1^\intercal  ({\bD_k^\intercal})^{-2}\bx_k,\\
&I_3=\alpha_k \bx_k^\intercal \bD_k^{-2}\bu_1\bu_1^\intercal  ({\bD_k^\intercal})^{-1}\bx_k, \quad  &I_4=-\alpha_k \bx_k^\intercal \bD_k^{-2}\tilde{\bSigma}_1 \bx_k\times I_1.
\end{aligned}\end{equation}
Here we should have put an indicator of the event as in the proof of Theorem \ref{Th21} to ensure the existence of the expectation involving the inverse of the matrices of interest but we ignore it for simplicity.
By the above decomposition as in the proof of Theorem \ref{Th21} one can verify that
\begin{equation}\label{82hgh}E|\sqrt{n} (\bu_1^\intercal \bX \bA^{-2}(\theta_1)  \bX^\intercal \bu_1 - E \bu_1^\intercal \bX \bA^{-2}(\theta_1) \bX^\intercal \bu_1 )|^2= O(1).\end{equation}
Moreover by the interpolation method using in the calculating the asymptotic mean in the proof of Theorem \ref{Th21} we conclude that  \begin{eqnarray}\label{4w3km}
E\bu_1^\intercal \bX \bA^{-2}(\theta_1)  \bX^\intercal \bu_1 - E \bu_1^\intercal \bX^0 \{\bA^0(\theta_1)\}^{-2} \bX^{0\intercal} \bu_1= o(1).
\end{eqnarray}
Note that \begin{eqnarray}\label{556ja}
\begin{aligned}E \bu_1^\intercal \bX^0 \{\bA^0(\theta_1)\}^{-2} \bX^{0\intercal} \bu_1&=\frac{1}{n}Etr \{\bA^0(\theta_1)\}^{-2}= E\int \frac{\theta_1^2}{(t-\theta_1)^2}dF^{\bX^{0\intercal} \bSigma_{1P} \bX^0}(t)\\&
\rightarrow \int \frac{\psi(\alpha_1)^2}{\{t-\psi(\alpha_1)\}^2}dF^{c,H}(t)= \psi(\alpha_1)^2\underline{m}^\prime\{\psi(\alpha_1)\}.
\end{aligned}
\end{eqnarray}
By the fact that $\psi(\alpha)$ is the the inverse function of $\alpha: x \mapsto -1/\underline{m}(x)$ we have
\begin{eqnarray}\label{gnm67}
\underline{m}^\prime\{\psi(\alpha_1)\}= \frac{1}{\alpha_1^2\psi^\prime(\alpha_1)}.
\end{eqnarray}
It follows from (\ref{624dg}),(\ref{82hgh}),(\ref{4w3km}) (\ref{556ja}) and (\ref{gnm67}) that
\begin{equation}\label{393ki} R_1 \stackrel{i.p.} \rightarrow \frac{1}{\alpha_1^2\psi^\prime(\alpha_1)}.
\end{equation}
One application of Slutsky's theorem implies that $\sqrt{n}\chi_1$ converges in distribution to a normal distribution with mean 0 and variance $\sigma_1^2$.\\

\indent To finish the proof of Theorem \ref{Th22}, we prove \eqref{22thm} next. Without loss of generality,  we just deal with the joint distribution of $\chi_1$ and $\chi_2$.\\

For $i=1,2$, let \begin{equation}\begin{aligned}
&\tilde{\bSigma}_1(\theta_i) = \frac{\bSigma_{1P}}{\theta_i},\quad  \bD_k^{(i)}=\bI-\tilde{\bSigma}_1(\theta_i)\bX_k\bX_k^\intercal, \quad \bB_k^{(i)}= ( \bD_k^{(i)})^{-1}\bu_i\bu_i^{\intercal}(\bD_k^{(i)\intercal})^{-1},\\
&a_n(\theta_i)=\frac{1}{1-\frac{1}{n}Etr\tilde{\bSigma}_1(\theta_i)(\bD_k^{(i)\intercal})^{-1}} ,\quad \delta_k (\theta_i) =\bx_k^{\intercal}\bB_k^{(i)} \bx_k - \frac{1}{n}tr\bB_k^{(i)}.
\end{aligned}\end{equation}
For any constants $c_1$ and $c_2$, as in (\ref{gnvb9}), write
 \begin{equation}\begin{split}
&c_1\sqrt{n}\Big\{\bu_1^{\intercal}\bX\bB^{-1}(\theta_1)\bX^{\intercal}\bu_1-E\bu_1^{\intercal}\bX\bB^{-1}(\theta_1)\bX^{\intercal}\bu_1\Big\}+\\&\relphantom{EEEEEE} c_2\sqrt{n}\Big\{\bu_2^{\intercal}\bX\bB^{-1}(\theta_2)\bX^{\intercal}\bu_2-E\bu_2^{\intercal}\bX\bB^{-1}(\theta_2)\bX^{\intercal}\bu_2\Big\}\\
&= \sqrt{n}\sum_{k=1}^n E_k\big\{c_1 \theta_1^{-1}a_n(\theta_1)\delta_k(\theta_1) + c_2\theta_2^{-1} a_n(\theta_2)\delta_k(\theta_2)\big\}+o_p(1).
\end{split}\end{equation}

Condition (ii) of Lemma \ref{Lem4.2} can be verified as in (\ref{e7gab}). For condition (i), we write
\begin{equation}\label{m9xc7}\begin{aligned}
&n\sum_{k=1}^n E_{k-1}[E_k\big\{c_1 \theta_1^{-1}a_n(\theta_1)\delta_k(\theta_1) + c_2 \theta_2^{-1}a_n(\theta_2)\delta_k(\theta_2)\big\}]^2\\
&=n\sum_{i=1}^2 c_i^2 \theta_i^{-2}a_n^2(\theta_i) \sum_{k=1}^n E_{k-1}[E_k\{\delta_k(\theta_i)\}]^2\\
&+n \frac{c_1c_2 a_n(\theta_1)a_n(\theta_2)}{\theta_1\theta_2}\sum_{k=1}^n E_{k-1}[E_k\{\delta_k(\theta_1)\}E_k\{\delta_k(\theta_2)\}].
\end{aligned}\end{equation}
The first term corresponds to marginal variances which have been derived before.  The second term above representing the covariance of $\lambda_1$ and $\lambda_2$ is slightly different from (\ref{347bn}).  By (\ref{543ah}) we need to find the limit of
\begin{equation}\label{81ago}\begin{aligned}
\frac{1}{n}(\gamma_4-3)\sum_{k=1}^n\sum_{i=1}^{p}E_k(\bB_k^{(1)})_{ii} E_k(\bB_k^{(2)})_{ii}+\frac{2}{n}\sum_{k=1}^ntr( E_k\bB_k^{(1)}E_k\bB_k^{(2)}).
\end{aligned}\end{equation}
Following the arguments similar to those from (\ref{347bn})-(\ref{348cn}), we conclude that the first term of (\ref{81ago}) equals $(\gamma_4-3)\sum_{k=1}^p u_{1k}^2u_{2k}^2+o_p(1)$.
We claim that the second term of (\ref{81ago}) is negligible. Indeed the second term can be rewritten as
 \begin{equation} \label{040720} \begin{aligned}
&\frac{2}{n}\sum_{k=1}^n E_{k-1}tr E_k ((\bD_k^{(1)})^{-1}\bu_1\bu_1^\intercal(\bD_k^{(1)\intercal})^{-1} ) E_k ((\bD_k^{(2)})^{-1}\bu_2\bu_2^\intercal(\bD_k^{(2)\intercal})^{-1} )\\
&=\frac{2}{n}\sum_{k=1}^n E_{k-1}\Big(\bu_1^\intercal (\bD_k^{(1)\intercal})^{-1}(\breve{\bD}_k^{(2)})^{-1} \bu_2\bu_2^\intercal (\breve{\bD}_k^{(2)\intercal})^{-1} (\bD_k^{(1)})^{-1}\bu_1\Big),
\end{aligned}\end{equation}
where  $(\breve{\bD}_k^{(i)})^{-1}$ is defined similarly as $(\bD_k^{(i)})^{-1}$ by $(\bx_1,\cdots, \bx_{k-1},\breve{\bx}_{k+1},\cdots,\breve{\bx}_{n})$ and $\breve{\bx}_{k+1}, \cdots, \breve{\bx}_{n}$ are i.i.d copies of $\bx_{k+1},\cdots,\bx_n$.
 Define $\bT(\theta_i), i=1,2$ to be analogues of $\bT$ in \eqref{defT1} with $\psi_n$ replaced by $\theta_i, i=1,2$.  The argument for (\ref{040720}) is parallel to \eqref{47agn}-\eqref{378qw}, so we do not provide all details and list only the differences below. The term  $\bu_1^\intercal \bT(\theta_1)^{-1}\bT(\theta_2)^{-1}\bu_2$ replaces $\bw_1^\intercal \bT^{-2} \bw_1$ in  \eqref{378qw}.
Since $\bu_1^\intercal \bu_2 = 0$, and $\bu_1^\intercal \bU_2=\bu_2^\intercal \bU_2=0$, we can get $\bu_1^\intercal \bT(\theta_1)^{-1}\bT(\theta_2)^{-1}\bu_2 =0$ by using \eqref{maeq1}.  The term replacing $n^{-1}a_{1n}^2 tr\tilde{\bSigma}_1 \bT^{-1} \tilde{\bSigma}_1 \bT^{-1}$ in \eqref{47agn} and \eqref{m40b0} is $n^{-1}a_{1n}(\theta_1)a_{1n}(\theta_2)tr\tilde{\bSigma}_1(\theta_1) (\bT(\theta_1))^{-1} \tilde{\bSigma}_1(\theta_2) (\bT(\theta_2))^{-1}$, that equals $1-(\psi(\alpha_2)-\psi(\alpha_1))/(\alpha_2-\alpha_1)$ asymptotically.  These imply that the second term is $o_p(1)$ via \eqref{47agn}.
Together with the fact that for $i = 1$ or $2$,  $a_n(\theta_i)\rightarrow \theta_i/ \alpha_i$, the limit of the second term of (\ref{m9xc7}) in probability is \begin{equation}c_1c_2\frac{\gamma_4-3}{\alpha_1\alpha_2}\sum_{k=1}^p u_{1k}^2 u_{2k}^2. \end{equation}
Therefore, the asymptotic covariance of $\sqrt{n}\chi_1$ and $\sqrt{n}\chi_2$ is given by $\sigma_{12}$ defined in assumption 5.  \\

\noindent \textbf{Proof of Theorem \ref{Th23}}.  We use the first $m_1$ spiked  eigenvalues to illustrate the idea. Denote  $(\lambda_{j}-\theta_1)/{\theta_1}$ by $\chi_{1j}$ for $j=1,\cdots, m_1$.
Partition the matrix $\bU_1 = (\bU_{11},\cdots, \bU_{1\mathcal{L}})$. Similar to \eqref{46bdf} and \eqref{shh23}, we write

\begin{equation}\label{72hoa} \begin{aligned} & \bLambda_S^{-1}- \bU_1^{\intercal}\bX\bB^{-1}(\lambda_j)\bX^{\intercal}\bU_1   \\ &= \left[\begin{array}{cccc}  {\tilde{\bS}_n}  &   {O_p(\frac{1}{\sqrt{n}})+O_p(\chi_{1j})}  &  \cdots &  {O_p(\frac{1}{\sqrt{n}})+O_p(\chi_{1j})} \\  {O_p(\frac{1}{\sqrt{n}})+O_p(\chi_{1j})} & {O_p(1)}  &  \cdots &  {\cdots}   \\    { \cdots} & {\cdots} &{O_p(1)}&  {O_p(\frac{1}{\sqrt{n}})+O_p(\chi_{1j})}\\ {O_p(\frac{1}{\sqrt{n}})+O_p(\chi_{1j})} & {\cdots} &   {O_p(\frac{1}{\sqrt{n}})+O_p(\chi_{1j})} &  {O_p(\frac{1}{\sqrt{n}})+O_p(\chi_{1j})} \end{array}\right]
\end{aligned}\end{equation}
where \begin{equation}\begin{aligned}
\tilde{S}_n&=\frac{\bI}{\alpha_1}-\bU_{11}^\intercal \bX \bB^{-1}(\lambda_j)\bX^\intercal \bU_{11}\\
&=\frac{\bI}{\alpha_1}-\bU_{11}^\intercal \bX \bB^{-1}(\theta_1)\bX^\intercal \bU_{11}+\chi_{1j}\theta_1 \bU_{11}^\intercal \bX \bB^{-2}(\theta_1)\bX^\intercal \bU_{11}+\\&
\relphantom{EEEEEE}
\chi_{1j}(\theta_1-\lambda_j)\bU_{11}^\intercal \bX\bB^{-1}(\lambda_1) \bB^{-1}(\theta_1)\bX^\intercal \bU_{11}\\
&=\frac{\bI}{\alpha_1}-\bU_{11}^\intercal \bX \bB^{-1}(\theta_1)\bX^\intercal \bU_{11}+ \chi_{1j}\big(\frac{\psi(\alpha_1)}{\alpha_1^2\psi^\prime(\alpha_1)}\bI + o_p(1)\big).
\end{aligned}\end{equation}
 Using arguments similar to the proof of Theorem \ref{Th22} we have \begin{equation}\sqrt{n}\frac{\alpha_1^2\psi^\prime(\alpha_1)}{\psi(\alpha_1)}\Big(\frac{\bI}{\alpha_1}-\bU_{11}^\intercal \bX \bB^{-1}(\theta_1)\bX^\intercal \bU_{11}\Big) \stackrel{D}\rightarrow \bG_1,\end{equation} where $\bG_1$ is a Gaussian random matrix with mean 0 and covariance structure characterized by \eqref{cocar}.
Therefore, if we multiply $n^{1/4}$ on the first $m_1$ rows and multiply $n^{1/4}$ on the first $m_1$ columns of \eqref{72hoa}, by using the Skorokhod strong representation similar to arguments in pages 464-465 of \cite{bai2008central}  we see that $\sqrt{n}\chi_{11},\cdots,\sqrt{n}\chi_{1m_1}$ converge weakly to the joint distribution of eigenvalues of $\bG_1$. Thus we conclude Theorem \ref{Th23}.

\section{Proof of Theorem \ref{Th2.4}}

 Set $\theta_n=\psi_n(\alpha)$, where $\alpha$ is one of $\alpha_1,\cdots,\alpha_K$. Let $\varphi_n = \sqrt{n}(\bw_1^\intercal \bX\bA^{-1}(\theta_n) \bX^\intercal \bw_1 - \theta_n/\alpha)$, and $M_n(z) = p(m_{F^{\bS_n}}(z)-m_{n}(z))$ where $m_{n}(z)$ is the Stieltjes transformation of $F_{c_n,H_n}(x)$. Define the contour $\mathcal{C}_n$  as in page 561 of \cite{bai2004}. We assume that the $M_n(z)$ has been truncated on the contour as in \cite{bai2004}. 
 To prove Theorem \ref{Th2.4} it suffices to establish the following lemma.
\begin{Lemma}  Under the assumptions of Theorem \ref{Th2.4}, 
we have for $z \in \mathcal{C}_n$,  $$ (\varphi_n, M_n(z)) \rightarrow (\varphi, M(z)),$$ where $\varphi$ is normal random variable with mean $0$ and variance $(\gamma_4-3) \sum_{i=1}^p  w_{1i}^4 + 2/\psi^\prime(\alpha) $,   independent of Gaussian process $M(z)$ with mean
\begin{equation}\begin{aligned}
EM(z) = &\frac{c\int \underline{m}^3(z)t^2dH(t)/(1+t\underline{m}(z))^3}{1-c\int \underline{m}^2(z)t^2dH(t)/(1+t\underline{m}(z))^2}dz \\ & + (\gamma_4 - 3)\int \frac{c\underline{m}^3(z)h_2(z)}{1-c\int \underline{m}^2(z)t^2dH(t)/(1+t\underline{m}(z))^2}dz,
\end{aligned}\end{equation}
 and covariance\begin{equation}\begin{aligned} Cov(M(z_1),M(z_2)) =& \frac{2\underline{m}^\prime(z_1)\underline{m}^\prime(z_2)}{(\underline{m}(z_1)-\underline{m}(z_2)^2}-\frac{1}{(z_1-z_2)^2} \\
 &+ c(\gamma_4-3)\frac{d^2}{dz_1 dz_2}[\underline{m}(z_1)\underline{m}(z_2)h_1(z_1,z_2)].
 \end{aligned}\end{equation}
\end{Lemma}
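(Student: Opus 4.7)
The plan is to represent both $\varphi_n$ and $M_n(z)$ as martingale-difference sums with respect to the common filtration $\mathcal{F}_k=\sigma(\bx_1,\dots,\bx_k)$, to apply a joint martingale CLT via Cram\'er-Wold, and then to show that the cross conditional covariance of the two martingale differences vanishes in probability. For $\varphi_n$, the proof of Theorem~\ref{Th21} (with $\bw_1=\bu_i$) already yields, after the centering \eqref{3o2c6},
\[
\varphi_n=\sqrt{n}\,a_n\sum_{k=1}^{n}(E_k-E_{k-1})\delta_k+o_p(1),
\]
in the notation of \eqref{noset}. For $M_n(z)$ I would use the rank-one martingale decomposition from \cite{bai2004} and \cite{pan2008central},
\[
M_n(z)-EM_n(z)=\sum_{k=1}^{n}(E_k-E_{k-1})Y_k(z)+o_p(1),
\]
where $Y_k(z)$ is a quadratic form in $\bx_k$ built from the leave-one-out resolvent of $\underline{\bS}_n$; the mean $EM_n(z)$ converges to $EM(z)$ under Assumption~7, the $h_2$ term entering as the $(\gamma_4-3)$ correction identified in \cite{pan2008central}.

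Next, by the Cram\'er-Wold device, joint convergence reduces to a one-dimensional CLT for
\[
c_0(\varphi_n-E\varphi_n)+\sum_{j=1}^{m}\bigl\{c_j\,\Re M_n(z_j)+c_j'\,\Im M_n(z_j)\bigr\}
\]
for arbitrary $c_0,c_j,c_j'\in\mathbb{R}$ and any finite $\{z_j\}\subset\mathcal{C}_n$. This is again a martingale-difference sum, so Lemma~\ref{Lem4.2} applies: the Lindeberg condition is controlled by the fourth-moment bounds \eqref{lar11}-\eqref{lar33} as in \eqref{e7gab}, and the conditional variance is evaluated through identity \eqref{543ah}. Its diagonal blocks reproduce the marginal variances of $\varphi$ and of $M(z_j)$ already established in Theorem~\ref{Th21} and in \cite{pan2008central}, so the only genuinely new computation is that of the cross blocks.

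The main obstacle will be to prove, for each fixed $z\in\mathcal{C}_n$,
\[
n\,a_n\sum_{k=1}^{n}E_{k-1}\bigl[(E_k-E_{k-1})\delta_k\cdot(E_k-E_{k-1})Y_k(z)\bigr]\stackrel{i.p.}{\longrightarrow}0.
\]
Writing $Y_k(z)=\bx_k^{\intercal}\bM_k(z)\bx_k$ modulo a deterministic shift and applying \eqref{543ah}, each summand becomes
\[
(\gamma_4-3)\sum_{i=1}^{p}(E_k\bB_k)_{ii}(E_k\bM_k(z))_{ii}+2\,\operatorname{tr}\bigl(E_k\bB_k\cdot E_k\bM_k(z)\bigr).
\]
Since $\bw_1=\bu_i$ satisfies $\bw_1^{\intercal}\bU_2=0$, the estimate \eqref{348cn} gives $E_k\bD_k^{-1}\bw_1=\bw_1+O(n^{-1})$, so the trace piece collapses to $\bw_1^{\intercal}(E_k\bM_k(z))\bw_1$; by the spectral decomposition of $\bGamma^{\intercal}\bGamma$ together with the spike equation $\underline{m}(\psi(\alpha_i))=-\alpha_i^{-1}$, this expression has a removable apparent pole at $z=\psi(\alpha_i)$ and extends analytically inside $\mathcal{C}_n$. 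The $(\gamma_4-3)$ piece reduces via Assumption~7 to a bounded analytic function of $z$ driven by $h_2$. Both limiting cross-covariance kernels in $z$ are thus analytic on and inside $\mathcal{C}_n$, so the contour integration that produces $L_p(\varphi)$ turns them into zero. This step is the most delicate, as it interlaces the leave-one-out analysis for the spike direction $\bw_1$ from Theorem~\ref{Th21} with the full-resolvent analysis for the LSS from \cite{bai2004} inside a single martingale array.

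Finally, tightness of $\{M_n(\cdot)\}$ on $\mathcal{C}_n$ from \cite{bai2004}, together with the finite-dimensional joint convergence and the vanishing cross-covariance, upgrades the conclusion to convergence in distribution of $(\varphi_n,M_n(\cdot))$ to $(\varphi,M(\cdot))$ with independent marginals, which proves the lemma.
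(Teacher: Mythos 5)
Your overall scaffolding (martingale representations of $\varphi_n$ and $M_n(z)$ with respect to the common filtration, Cram\'er--Wold, Lindeberg via \eqref{lar11}--\eqref{lar33}, tightness from \cite{bai2004}) matches the paper, but the crucial step --- showing the cross conditional covariance vanishes --- is handled by a mechanism that does not work and does not prove the lemma as stated. The lemma asserts that for each fixed $z\in\mathcal{C}_n$ the pair $(\varphi_n,M_n(z))$ converges jointly with \emph{independent} limits, i.e.\ the asymptotic covariance of $\varphi$ and $M(z)$ is zero pointwise in $z$. Your argument instead concedes a possibly nonzero limiting cross kernel and tries to kill it only after the contour integration producing $L_p(\varphi)$, claiming the kernel is ``analytic on and inside $\mathcal{C}_n$.'' Even if that were true it would only give independence of $\varphi_n$ from $L_p(\varphi)$, not the statement of the lemma; and the analyticity claim itself is unsubstantiated: limiting kernels of this type are functions of $\underline{m}(z)$ and $h_1,h_2$, and $\underline{m}$ is not analytic across the support of $F^{c,H}$, which lies \emph{inside} $\mathcal{C}_n$ (the only singularity you address, a removable one at $\psi(\alpha_i)$, is not the relevant obstruction). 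There is also a conflation of $E_k\bD_k^{-1}\bw_1$ with the full expectation treated in \eqref{348cn}.

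The point you miss is that no limit computation is needed: the cross term is $o_p(1)$ for purely size reasons, because $\bB_k=\bD_k^{-1}\bw_1\bw_1^{\intercal}(\bD_k^{\intercal})^{-1}$ is rank one. Applying \eqref{543ah} to $E_{k-1}[(E_k\delta_k)(E_k\epsilon_k(z))]$ and using $(\bB_k)_{ii}\ge 0$, $|(E_k\bC_k)_{ii}|\le\|E_k\bC_k\|$ together with Von Neumann's trace inequality, each summand is bounded by $Cn^{-2}\|E_k\bC_k\|\,|\operatorname{tr}E_k\bB_k|$ with $\operatorname{tr}E_k\bB_k=O(1)$ (rather than $O(n)$, as it would be for a full resolvent), so after the factors $\sqrt{n}\,a_n b_n(z)$ the whole cross sum has second moment $O(n^{-1})$; this is exactly the paper's route and it yields zero asymptotic cross covariance for every $z$ on the contour, with Lemma 2.3 of \cite{bai2004} transferring the conclusion to the $z$-derivative term $\frac{d}{dz}\bar{\beta}_k(z)\epsilon_k(z)$. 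A further technical point you do not address is that the spike part is truncated at $\eta_n n^{1/4}$ while the LSS part is truncated at $\eta_n n^{1/2}$; the paper runs the joint martingale argument with the two differently truncated arrays and you would need to justify this (or re-prove Theorem \ref{Th21} under the coarser truncation) before combining the two decompositions in a single martingale array.
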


\begin{proof}
Define $\bQ(z) = \bS_n-z\bI, \quad \bQ_k(z) = \bQ(z)-\bGamma \bx_k \bx_k^\intercal \bGamma^\intercal$, $\bC_k = \bGamma^\intercal \bQ_k^{-1}(z) \bGamma$,
$$
\epsilon_k(z) = \bx_k^\intercal \bC_k \bx_k-\frac{1}{n}tr \bC_k,\  b_n(z)=\frac{1}{1+ \frac{1}{n}Etr \bC_k},
 \beta_k(z)=\frac{1}{1+ \bx_k^\intercal  \bC_k \bx_k},\  \bar{\beta}_k=\frac{1}{1+ \frac{1}{n}tr \bC_k}.
$$
For any constants $c_1$ and $c_2$, we consider the distribution of  $c_1 \varphi_n + c_2 M_n(z)$. Its tightness is from that of  ${M}_n(z)$ which was proved in \cite{bai2004}. We truncate the entries of the matrix $\bX$ in the linear spectral statistic at $\eta_n n^{1/2}$ as in \cite{bai2004} while we truncate the entries of the matrix $\bX$ at $\eta_n n^{1/4}$ for the largest spike eigenvalues which is in Appendix. Hence it is equivalent to considering 
\begin{equation}
c_1 \sqrt{n}(\bw_1^{\intercal}\dot{\bX} \dot{\bA}^{-1}\dot{\bX}^{\intercal}\bw_1- \frac{\theta_n}{\alpha} )+c_2p(m_{F^{\ddot{\bS}_n}}(z)- m_n(z))
\end{equation}
where we use `.' to mean that the entries of $\bX$ is truncated at $\eta_n n^{1/4}$ and `..' to mean that the entries of $\bX$ is truncated at $\eta_n n^{1/2}$.
We below use $\ddot{\beta}_k(z)$ to represent $\beta_k(z)$ but the underlying random variables are truncated at $\eta_n n^{1/2}$, and $\ddot{\epsilon}_k(z),\dot{\bar{\alpha}}_k, \dot{\delta}_k$, etc, are similarly defined. Section 2 of \cite{bai2004}
establishes that
 $$p(m_{F^{\ddot{\bS}_n}}(z)-Em_{F^{\ddot {\bS}_n}}(z))
  = -\sum_{k=1}^n E_k \frac{d}{dz}\ddot{\beta}_k(z) \ddot{\epsilon}_k(z) + o_p(1).$$
 Thus, referring to (\ref{gnvb9}), we have
\begin{equation}\label{040702}\begin{aligned} &c_1\sqrt{n}(\bw_1^{\intercal}\dot{\bX} \dot{A}^{-1}\dot{\bX}^{\intercal}\bw_1- E \bw_1^{\intercal}\dot{\bX} \dot{A}^{-1}\dot{\bX}^{\intercal}\bw_1 )+c_2p(m_{F^{\ddot{\bS}_n}}(z)-Em_{F^{\ddot{\bS}_n}}(z))\\
&=\sum_{k=1}^{n} E_k (c_1 \sqrt{n}\dot{\bar{\alpha}}_k \dot{\delta}_k - c_2 \frac{d}{dz}\ddot{\bar{\beta}}_k(z) \ddot{\epsilon}_k(z))+o_p(1).
\end{aligned}\end{equation}
Note that both Theorem \ref{Th21} and the main result in \cite{bai2004} use martingale to prove central limit theorem as in (\ref{040702}). Hence by Lemma 4.1
it suffices to consider the limit of the cross product
\begin{equation}\label{723zc}  \sum_{k=1}^n E_{k-1}(E_k \sqrt{n}\dot{\bar{\alpha}}_k \dot{\delta}_k) (E_k \ddot{\bar{\beta}}_k(z) \ddot{\epsilon}_k(z)) 
.\end{equation}
Using (2.1) in \cite{bai2004}, and (\ref{lar11}) we conclude that
\begin{equation}\label{346mv} (\ref{723zc}) = \sqrt{n}\dot{a}_n \ddot{b}_n(z)\sum_{k=1}^n   E_{k-1}[(E_k \dot{\delta}_k) (E_k \ddot{\epsilon}_k(z))]+o_p(1).
\end{equation}
From (\ref{543ah}), we have
\begin{eqnarray}\label{fgn21}|E_{k-1}[(E_k \dot{\delta}_k) (E_k \ddot{\epsilon}_k(z))]|\leq\frac{C}{n^2} \Big[|\sum_{i=1}^p (E_k\dot{\bB}_k)_{ii}(E_k\ddot{\bC}_k)_{ii}|+ |tr(E_k\dot{\bB}_kE_k\ddot{\bC}_k)|\Big].\end{eqnarray}
By the fact that $(\dot{\bB}_{k})_{ii}\geq 0$ , and $|(E_k\ddot{\bC}_k)_{ii}|\leq \rVert E_k \ddot{\bC}_k \rVert $, we get \begin{eqnarray}\label{fgn22}|\sum_{i=1}^p (E_k\dot{\bB}_k)_{ii}(E_k\ddot{\bC}_k)_{ii}|\leq  \rVert E_k \ddot{\bC}_k \rVert |trE_k \dot{\bB}_k|. \end{eqnarray}
A simple application of Von Neumann's trace inequality yields that \begin{eqnarray} \label{fgn23}|tr(E_k\dot{\bB}_kE_k\ddot{\bC}_k)|\leq \rVert E_k \ddot{\bC}_k \rVert |trE_k \dot{\bB}_k|.\end{eqnarray}
It follows from (\ref{fgn21}), (\ref{fgn22}) and (\ref{fgn23}) that  \begin{equation}\label{833oa}\begin{aligned}
&E|\sqrt{n}\dot{a}_n \ddot{b}_n(z)\sum_{k=1}^n   E_{k-1}(E_k \dot{\delta}_k) (E_k \ddot{\epsilon}_k(z)))|^2\\
&\leq Cn^{-3} |\dot{a}_n \ddot{b}_n(z)|^2E\Big{|}\sum_{k=1}^n \rVert E_k \ddot{\bC}_k \rVert |trE_k \dot{\bB}_k|\Big{|}^2\\
&\leq Cn^{-2}|\dot{a}_n \ddot{b}_n(z)|^2\sum_{k=1}^n E|trE_k \dot{\bB}_k|^2=O(n^{-1}).
\end{aligned}\end{equation}
\noindent Then (\ref{346mv}) and (\ref{833oa}) imply $(\ref{723zc}) = o_p(1)$. Lemma 2.3 in \cite{bai2004} further ensures
 $$\sum_{k=1}^n E_{k-1}(E_k \sqrt{n}\dot{\bar{\alpha}}_k \dot{\delta}_k) (E_k \frac{d}{dz}\ddot{\bar{\beta}}_k(z) \ddot{\epsilon}_k(z)))=o_p(1).$$
 It follows that$ \sqrt{n}(\bw_1^{\intercal}\dot{\bX} \dot{\bA}^{-1}\dot{\bX}^{\intercal}\bw_1- E \bw_1^{\intercal}\dot{\bX} \dot{\bA}^{-1}\dot{\bX}^{\intercal}\bw_1)$ and  $ p(m_{F^{\ddot{\bS}_n}(z)}-Em_{F^{\ddot{\bS}_n}(z)})$ are asymptotically independent from Lemma \ref{Lem4.2} and Cram\'er-Wold's device. Consequently, $\varphi_n$ and $M_n(z)$ are asymptotically independent. The marginal distribution is from our Theorem \ref{Th22} and Theorem 1.4 of \cite{pan2008central}.
\end{proof}

\noindent \textbf{Proof of  Theorem \ref{Th2.4}}: Without loss of generality, we consider the first spiked eigenvalue $\lambda_1$. Recall that $\chi_1 = (\lambda_1-\theta_1)/\theta_1$, and  $R_1$ in (\ref{624dg}). Asymptotic independence of $\sqrt{n}\chi_1$ and $L_p(\varphi)$ follows easily from the above lemma, (\ref{26hae}) and (\ref{393ki}).

\section{Proof of Theorem \ref{Th2.5}}
\begin{proof}
 We take the estimation of $\sum_{k=1}^p u_{1k}^4$ as an illustration.  Let $\bs_1, \bs_2$ be two deterministic $p\times 1$ vectors, and $\bu_i, i = 1,\cdots, p$ be the population eigenvectors of $\bSigma$. 
 We need to clarify that although results in \cite{mestre2008} are given under absolutely continuous random
entries with a bounded 8th moment, this assumption can be relaxed to random entries with a bounded 4th moment.  Indeed, note that the conclusion (7) therein is a well known result, see \cite{bai2010}, and the conclusion (9) therein is also true by appropriately modifying the proof of \cite{bai2007asymptotics} (just change one unit vector there to $\bs_2$) assuming that the bounded 4th moment.

By replacing $c$ and $H$ with $c_n$ and $H_n$ in both \eqref{88am9} and \eqref{23ato}, we get two $n$-dependent solutions $m_n(z)$ and $\underline{m}_n(z)$ respectively. Some notations are introduced first:
\begin{equation}\label{jo38b}\begin{aligned}
&\eta_1 = \bs_1^ \intercal \bu_1 \bu_1^\intercal \bs_2,\ f_n(z) = \frac{z}{1-c-czm_n(z)}, \quad  s_n = \bs_1^ \intercal(-z\underline{m}_n(z)\bSigma-zI)^{-1} \bs_2 ,  \\
& g_n(z)=\sum_{i=1}^p \frac{\bs_1^ \intercal \bu_i \bu_i^\intercal \bs_2}{ \alpha_i-f_n(z)}f_n^\prime(z) = s_n \frac{1-c+cz^2m_n^\prime(z)}{1-c-czm_n(z)},\\
&\hat{s}_n (z)= \bs_1^\intercal (\bS_n-zI)^{-1}\bs_2,\ \hat{m}_n (z)= m_{F^{\bS_n}}(z),\ \hat{g}_n(z) = \hat{s}_n(z)\frac{1-c+cz\hat{m}_n'(z)}{1-c-cz\hat{m}_n(z)}.
\end{aligned}\end{equation}
Let $\Upsilon$ be a negatively oriented contour described by the boundary of a rectangle $$\{z \in \mathbb{C}, E_1 \leq Re(z) \leq E_2, |Im(z)| \leq b\},$$ where $b$ is positive, and $[E_1,E_2]$ encloses $\psi(\alpha_1)$ and no other components of the support of $F^{c_n,H_n}$.
 Note that since $\alpha_1$ is the largest distant spike, the associated support of $F^{c_n,H_n}$ for this spike is separated from other components,  and the length of the support decreases to 0 when n goes to infinity. So it is legitimate to choose $\Upsilon$ as above according to our assumptions.

From  Proposition 3 in \cite{mestre2008}
  $$\eta_1 =\frac{1}{2\pi i}\oint_\Upsilon  g_n(z) dz. $$
Furthermore Proposition 4 in \cite{mestre2008} yields
\begin{equation}\label{050701}
\eta_1-\hat{\eta}_1\stackrel {a.s.}\longrightarrow 0
\end{equation}
where $\hat{\eta}_1:=\frac{1}{2\pi i}\oint_\Upsilon  \hat{g}_n(z) dz$.
We below prove a stronger result in terms of the moment convergence (see \ref{jh93g} below) for our purpose.

Let $\Xi $ be the event such that the contour $\Upsilon$  contains the largest sample eigenvalue but excludes all the remaining sample eigenvalues. 
The event $\Xi$ holds with high probability as pointed out in the proof of Theorem \ref{Th22}.
The next aim is to show that \begin{equation}\label{03jpn}E\Big |(\hat{\eta}_1-\eta_1)I(\Xi) \Big|^4 \leq Cn^{-2},
\end{equation}
where the constant $C$ is independent of the choice of the vectors $\bs_1$ and $\bs_2$ as long as they are bounded in terms of the Euclidean norm.
We first show that for any $z \in \Upsilon$
\begin{eqnarray}\label{jh93g}E ( |\hat{g}_n(z)-g_n(z)|^4  I(\Xi) ) \leq C n^{-2}.
\end{eqnarray}
When there is no confusion we below omit the variable z from notations in (\ref{jo38b}).
 Set
 $$g_z(u,v,w) = u \frac{1-c+cz^2w}{1-c-czv},
 $$
  which is an analytic function of u, v and $w$ in a region excluding $v\neq (1-c)/cz.$ Due to the structure of the contour $\Upsilon$
  by the exact separation arguments in \cite{bai1998no},  $\hat{s}_n, \hat{m}_n, \hat{m}_n^\prime $ are uniformly bounded for $z$ on the contour $\Upsilon$ given the event $\Xi $.
  Note that $|\underline{m}(z)|$ is bounded below away from zero on a bounded set by (5.1) in \cite{bai2004} (one may verify that $\underline{m}_n(z)$ has a similar property) and the end point $E_1$ involved in the contour $\Upsilon$ is positive. Hence from the relationship between $\underline{m_n}(z)$ and $m_n(z)$ we conclude that the absolute value of the  denominator of $g(\hat{s}_n, \hat{m}_n, \hat{m}_n^\prime)$ is bounded below away from zero on the contour.
 Hence a straightforward calculation indicates that
\begin{equation}\begin{aligned}
& |g_z(\hat{s}_n, \hat{m}_n, \hat{m}_n^\prime)-g_z(s_n,m_n,m_n^\prime)| I(\Xi)\\
&\leq C(|\hat{s}_n -s_n| + |\hat{m}_n-m_n| + |\hat{m}_n^\prime-m_n^\prime|)I(\Xi),
\end{aligned}\end{equation}
where the constant $C$ in the above inequality is independent of $z$. 
We next consider the 4th moment of the above three terms term by term.

   By modifying the proof of Theorem 2 in \cite{bai2004} we conclude that
   $E|\hat{s}_n-s_n |^4I(\Xi)\leq C_1n^{-2}$. 
   The proof of Lemma 1.1 in \cite{bai2004} shows that $E|\hat{m}_n-m_n|^4I(\Xi) \leq C_2 n^{-2}$. 
  Note that although \cite{bai2007asymptotics} and \cite{bai2004} established the above conclusions on the two horizontal lines of the contour $\Upsilon$ they still hold on the two vertical lines of the contour because $\min\limits_{i}|\lambda_i-z|\geq C>0$ on the two vertical lines given that the respective distances from $\varphi(\alpha_1)$ to the end points $E_1$ and $E_2$ involved in the contour can be positive. To handle the last term by using a martingale method it is not difficult to derive
   \begin{equation} \label{734oj} E|\hat{m}_n^\prime -E\hat{m}_n^\prime|^4 \leq C n^{-2}.
   \end{equation}
    Moreover note that $\sqrt{n}(E\hat{m}_n -m_n)$ converges uniformly to zero on the contour by \cite{bai2004}. Applying Vitali's convergence theorem, on the contour,
\begin{equation}\label{34oan} \sqrt{n}(E\hat{m}_n^\prime-m_n^\prime) \rightarrow 0  \;\; \mbox{uniformly for z}.\end{equation}
(\ref{734oj}), together with (\ref{34oan}), implies $E|\hat{m}_n^\prime - m_n^\prime|^4 \leq C n^{-2}$. Consequently, we have (\ref{jh93g}).
It follows from Jensen's inequality that
\begin{equation}
E| \oint_\Upsilon (\hat{g}_n-g_n) I(\Xi) dz|^4  \leq C\oint_\Upsilon E\big{(}|\hat{g}_n-g_n|^4 I(\Xi)\big{)} |dz|\leq Cn^{-2}.
\end{equation}
Therefore (\ref{03jpn}) is true. \\

Now we are at a position to show that our estimator for $\sum_{k=1}^p u_{1k}^4$ is weakly consistent.  
If we take $\bs_1=\bs_2=\be_1$ in the definition $\eta_1$ of (\ref{03jpn}) then one could find the estimator $\hat{v}_{1j}$ of $u_{1j}^2$ from (\ref{050701}) for $j = 1,\cdots, p$. 
It turns out that 
\begin{equation}\label{070701}
\hat{v}_{1j} = \sum_{k=1}^p \theta_1(k) \hat{u}_{kj}^2,
\end{equation}
and recall that $\theta_1(k)$ is defined in (\ref{ceeg1}). One may refer to \cite{mestre2008} for details of calculating the explicit expression of $\hat{v}_{1j}$.

\indent Using (\ref{03jpn}), we have
\begin{equation} \begin{aligned} &P( \max_{j}| \hat{v}_{1j}-u_{1j}^2 | > \epsilon) \leq P( \max_{j}| \hat{v}_{1j}-u_{1j}^2 | > \epsilon,\ \Xi) + P(\Xi^c) \\ &\leq \sum_{i=1}^p P(| \hat{v}_{1i}-u_{1i}^2 | > \epsilon,\  \Xi)+ P(\Xi^c)=o(1).\end{aligned}\end{equation}
It follows that
 $$\sum_{j=1}^p |\hat{v}_{1j}^2-u_{1j}^4| \leq \max_{j}| \hat{v}_{1j}-u_{1j}^2| \sum_{j=1}^p ( |\hat{v}_{1j}| + u_{1j}^2) =o_p(1),
  $$
where we use the fact $\sum_{j=1}^p |\hat{v}_{1j}| =O_p(1)$ to be proved below. 

 We finally verify the fact $\sum_{j=1}^p |\hat{v}_{1j}| =O_p(1)$. We consider the case $p/n < 1$ first. By analyzing equation \eqref{an9k0} we conclude have the following interlacing relationship \begin{equation}\label{interlac}\lambda_1 > \nu_1 > \lambda_2 > \nu_2 > \cdots > \lambda_p >\nu_p >0.\end{equation}
 It follows from \eqref{ceeg1} that  $\theta_1(1) > 0$ and $\theta_1(k) < 0$ for $k\geq 2$.
Thus $|\hat{v}_{1j}| \leq \theta_1(1)\hat{u}_{1j}^2 - \sum_{k \geq 2}^p \theta_1(k)\hat{u}_{kj}^2$ by (\ref{070701}).
Note that $\sum_{k=1}^p \theta_1(k) = 1$ by the definition of $\theta_1(k)$. Furthermore, we claim that $\theta_{1}(1) $ is bounded with high probability.  Indeed, from \eqref{interlac} we have $\frac{\lambda_j}{\lambda_1-\lambda_j}<\frac{\nu_{j-1}}{\lambda_1-\nu_{j-1}}$ for $ j = 2, \cdots, p$. This further implies that $\theta_1(1) < 1 + \lambda_2/(\lambda_1-\lambda_2)$ by the definition of $\theta_1(k)$. Since with high probability, $\lambda_1$ lies in a small interval containing $\psi(\alpha_1)$ while $\lambda_2$ lies outside the interval, we get $\theta_1(1)$ is bounded with high probability, as claimed. Thus $\sum_{j=1}^p |\hat{v}_{1j}| \leq \theta_1(1)-\sum_{k\geq 2}^p \theta_1(k) = 2\theta_1(1)-1=O_p(1)$. This is also true for $p/n \geq 1$ through similar arguments. Therefore we conclude that  $\sum_{k=1}^p u_{1k}^4$ is consistently estimated by $\sum_{j=1}^p (\sum_{k=1}^p \theta_1(k)\hat{u}_{kj}^2)^2$.

\end{proof}

\section{Appendix}
This appendix provides justifications for truncation and centralization on the entries of $\bX$, and the proof of Lemma \ref{Evbhighprob}, Lemma \ref{lem31}.\\


\subsection{Truncation and Centralization}

Recall $x_{ij}=q_{ij}/\sqrt{n}$. 
By Assumption 1 we can select $\eta_n \rightarrow 0$ such that
 \begin{equation}\label{348c1}\eta_n^{-4}\int_{|q_{11}| > \eta_n n^{1/4}} |q_{11}|^4\rightarrow 0.
\end{equation}
 Let $\hat{x}_{ij}=\frac{1}{\sqrt{n}}q_{ij}I(|q_{ij}|\leq\eta_n n^{1/4})-E\frac{1}{\sqrt{n}}q_{ij}I(|q_{ij}|\leq\eta_n n^{1/4}), \tilde{x}_{ij}=x_{ij}-\hat{x}_{ij}, \hat{\bX}=(\hat{x}_{ij}),\tilde{\bX}=(\tilde{x}_{ij}).$ Let $\hat{\bX}_k=\hat{\bX}-\hat{\bx}_k \be_k^\intercal, \hat{\bX}_{jk}=\hat{\bX}-\hat{\bx}_k \be_k^\intercal-\hat{\bx}_j \be_j^\intercal.$ Recall $\tilde{\bSigma}_1, \bD, \bD_k, \bD_{jk}, \alpha_k, \alpha_{jk}$ defined in \eqref{noset} and  similarly define their respective analogues $\hat{\bD}_k, \hat{\bD}, \hat{\bD}_{jk}, \hat{\alpha}_k, \hat{\alpha}_{jk}$ corresponding to the truncated version.

 The first aim is to prove that
\begin{equation}\label{xf5la}\begin{aligned}
\sqrt{n}\Big(\bw_1^{\intercal}\bX(\bI_p-\bX^{\intercal}\tilde{\bSigma}_1\bX)^{-1}\bX^{\intercal}\bw_1-\bw_1^{\intercal}\hat{\bX}(\bI_p-\hat{\bX}^{\intercal}\tilde{\bSigma}_1 \hat{\bX})^{-1}\hat{\bX}^{\intercal}\bw_1\Big) \stackrel{i.p.}\rightarrow 0.\end{aligned}\end{equation}
With the help of \eqref{maeq1} and \eqref{maeq2}, we write
\begin{equation}\label{45non}\begin{aligned}
&\sqrt{n}(\bw_1^{\intercal}\bX(\bI_n-\bX^{\intercal}\tilde{\bSigma}_1\bX)^{-1}\bX^{\intercal}\bw_1-\bw_1^{\intercal}\hat{\bX}(\bI_n-\hat{\bX}^{\intercal}\tilde{\bSigma}_1\hat{\bX})^{-1}\hat{\bX}^{\intercal}\bw_1)\\
&=\sqrt{n}(\bw_1^{\intercal}\bX\bX^{\intercal}\bD^{-1}\bw_1-\bw_1^{\intercal}\hat{\bX}\hat{\bX}^{\intercal}\hat{\bD}^{-1}\bw_1)\\
&=\sqrt{n}\bw_1^\intercal(\bX\bX^{\intercal}-\hat{\bX}\hat{\bX}^{\intercal})\bD^{-1}\bw_1+\sqrt{n}\bw_1^\intercal\hat{\bX}\hat{\bX}^{\intercal}(\bD^{-1}-\hat{\bD}^{-1})\bw_1\\
&=\sqrt{n}\bw_1^\intercal (\tilde{\bX}\bX^\intercal +\hat{\bX}\tilde{\bX}^\intercal )\bD^{-1}\bw_1+\sqrt{n}\bw_1^\intercal\hat{\bX}\hat{\bX}^{\intercal}\hat{\bD}^{-1}\tilde{\bSigma}_1(\bX\bX^{\intercal}-\hat{\bX}\hat{\bX}^{\intercal})\bD^{-1}\bw_1\\
&=\sqrt{n}\sum_{i=1}^n \alpha_i\bw_1^\intercal \tilde{\bx}_i \bx_i^\intercal \bD_i^{-1}\bw_1+\sqrt{n}\sum_{i=1}^n \bw_1^\intercal \hat{\bx}_i \tilde{\bx}_i^\intercal \bD^{-1}\bw_1
\\&\relphantom{=}+\sqrt{n}\sum_{i=1}^n \alpha_i \bw_1^\intercal\hat{\bX}\hat{\bX}^{\intercal}\hat{\bD}^{-1}\tilde{\bSigma}_1\tilde{\bx}_i \bx_i^\intercal \bD_i^{-1}\bw_1 + \sqrt{n}\sum_{i=1}^n \bw_1^\intercal\hat{\bX}\hat{\bX}^{\intercal}\hat{\bD}^{-1}\tilde{\bSigma}_1\hat{\bx}_i \tilde{\bx}_i^\intercal \bD^{-1}\bw_1.
\end{aligned}\end{equation}
For the first and second term above they can be handled by the same way as the $\omega_1$ and $\omega_2$ in (7.3) of \cite{bai2007asymptotics}. For the third and fourth term, they are still similar but more complicated. We below only show the third term is $o_p(1)$ for illustration.

\indent By extracting the $i$-th column of $\bX$ respectively from $\hat{\bX}\hat{\bX}^{\intercal}$ and $\hat{\bD}^{-1}$ we split the third term above into summations of the four terms: $J_1, J_2, J_3, J_4$, where
\begin{equation}\begin{aligned}
&J_1 =\sqrt{n}\sum_{i=1}^n \alpha_i \bw_1^\intercal\hat{\bX}_i\hat{\bX}_i^{\intercal}\hat{\bD}_i^{-1}\tilde{\bSigma}_1\tilde{\bx}_i \bx_i^\intercal \bD_i^{-1}\bw_1,\\
&J_2=\sqrt{n}\sum_{i=1}^n \alpha_i \bw_1^\intercal\hat{\bx}_i\hat{\bx}_i^{\intercal}\hat{\bD}_i^{-1}\tilde{\bSigma}_1\tilde{\bx}_i \bx_i^\intercal \bD_i^{-1}\bw_1,\\
&J_3=\sqrt{n}\sum_{i=1}^n \alpha_i\hat{\alpha}_i \bw_1^\intercal\hat{\bX}_i\hat{\bX}_i^{\intercal}\hat{\bD}_i^{-1}\tilde{\bSigma}_1\hat{\bx}_i \hat{\bx}_i^\intercal \hat{\bD}_i^{-1}\tilde{\bSigma}_1\tilde{\bx}_i \bx_i^\intercal \bD_i^{-1}\bw_1,\\
&J_4=\sqrt{n}\sum_{i=1}^n \alpha_i \hat{\alpha}_i \bw_1^\intercal\hat{\bx}_i\hat{\bx}_i^{\intercal}\hat{\bD}_i^{-1}\tilde{\bSigma}_1\hat{\bx}_i \hat{\bx}_i^\intercal \hat{\bD}_i^{-1}\tilde{\bSigma}_1\tilde{\bx}_i \bx_i^\intercal \bD_i^{-1}\bw_1.
\end{aligned}\end{equation}
We only show $J_1=o_p(1)$ below and others can be handled similarly. We can define a event similar to \eqref{defevnt43} which holds with high probability such that $\alpha_i$ and the spectral norm of $\hat{\bD}_i^{-1}, \bD_i^{-1}$ are bounded on this event.  Therefore, 
we may ignore $\alpha_i$ inside $J_1$ when calculating the corresponding moment such as \eqref{00vv6} below for simplicity. Moreover, an indicator function of such an event should be added inside the expectation to handle the inverse of the matrices of interest, but for notational simplicity, we omit it as in the earlier section. A direct calculation indicates that
\begin{equation}\label{00vv6}\begin{split}E| \sqrt{n}&\sum_{i=1}^n  \bw_1^\intercal\hat{\bX_i}\hat{\bX_i}^{\intercal}\hat{\bD}_i^{-1}\tilde{\bSigma}_1\tilde{\bx}_i \bx_i^\intercal \bD_i^{-1}\bw_1|^2  \leq  n\sum_{i=1}^n E|\bw_1^\intercal\hat{\bX_i}\hat{\bX_i}^{\intercal}\hat{\bD}_i^{-1}\tilde{\bSigma}_1\tilde{\bx}_i \bx_i^\intercal \bD_i^{-1}\bw_1|^2\\
&+  n\sum_{i_1\neq i_2}E \Big(\bw_1^\intercal\hat{\bX}_{i_1}\hat{\bX}_{i_1}^{\intercal}\hat{\bD}_{i_1}^{-1}\tilde{\bSigma}_1\tilde{\bx}_{i_1} \bx_{i_1}^\intercal \bD_{i_1}^{-1}\bw_1\times \bw_1^\intercal\hat{\bX}_{i_2}\hat{\bX}_{i_2}^{\intercal}\hat{\bD}_{i_2}^{-1}\tilde{\bSigma}_1\tilde{\bx}_{i_2} \bx_{i_2}^\intercal \bD_{i_2}^{-1}\bw_1\Big).
\end{split}\end{equation}
It is easy to see the first term above converges to zero by the fact that
$$E| \bx_i^\intercal \bD_i^{-1}\bw_1\bw_1^\intercal\hat{\bX_i}\hat{\bX_i}^{\intercal}\hat{\bD}_i^{-1}\tilde{\bSigma}_1\tilde{\bx}_i |^2=o(n^{-2}). $$
For the second term,  by further extracting $\bx_{i_1}$ from $\hat{\bX}_{i_2}$ and $\hat{\bD}_{i_2}$ we have
\begin{equation}\begin{aligned}
&\bw_1^\intercal\hat{\bX}_{i_2}\hat{\bX}_{i_2}^{\intercal}\hat{\bD}_{i_2}^{-1}\tilde{\bSigma}_1\tilde{\bx}_{i_2} \bx_{i_2}^\intercal \bD_{i_2}^{-1}\bw_1 = \bw_1^\intercal (\hat{\bX}_{i_1i_2}\hat{\bX}_{i_1i_2}^\intercal + \hat{\bx}_{i_1}\hat{\bx}_{i_1}^\intercal)\\
&\relphantom{EE}\times (\hat{\bD}_{i_1i_2}^{-1}+\hat{\bD}_{i_1i_2}^{-1}\tilde{\bSigma}_1 \hat{\bx}_{i_1}\hat{\bx}_{i_1}^\intercal \hat{\bD}_{i_2}^{-1})\tilde{\bSigma}_1\tilde{\bx}_{i_2} \bx_{i_2}^\intercal (\bD_{i_1i_2}^{-1} +\bD_{i_1i_2}^{-1} \tilde{\bSigma}_1 \bx_{i_1}\bx_{i_1}^\intercal \bD_{i_2}^{-1})\bw_1.
\end{aligned}
\end{equation}
Plugging the above expansion into the second term of (\ref{00vv6}) we have eight terms. We below consider only one of eight terms
\begin{equation}\Delta_1= n\sum_{i_1\neq i_2}E \bw_1^\intercal\hat{\bX}_{i_1}\hat{\bX}_{i_1}^{\intercal}\hat{\bD}_{i_1}^{-1}\tilde{\bSigma}_1\tilde{\bx}_{i_1} \bx_{i_1}^\intercal \bD_{i_1}^{-1}\bw_1\times \bw_1^\intercal\hat{\bX}_{i_1i_2}\hat{\bX}_{i_1i_2}^{\intercal}\hat{\bD}_{i_1i_2}^{-1}\tilde{\bSigma}_1\tilde{\bx}_{i_2} \bx_{i_2}^\intercal \bD_{i_1i_2}^{-1}\bw_1,
\end{equation}
and the other seven terms can be estimated similarly.

\indent By calculating expectation with respect to $\bx_{i_1}$, we find  \begin{equation}\label{bc8u2}\Delta_1= n\sum_{i_1\neq i_2}E\tilde{x}_{11}^2 E\bw_1^\intercal\hat{\bX}_{i_1}\hat{\bX}_{i_1}^{\intercal}\hat{\bD}_{i_1}^{-1}\tilde{\bSigma}_1 \bD_{i_1}^{-1}\bw_1\times \bw_1^\intercal\hat{\bX}_{i_1i_2}\hat{\bX}_{i_1i_2}^{\intercal}\hat{\bD}_{i_1i_2}^{-1}\tilde{\bSigma}_1\tilde{\bx}_{i_2} \bx_{i_2}^\intercal \bD_{i_1i_2}^{-1}\bw_1.
\end{equation}
Since we have $E\tilde{x}_{11}^2 = o(n^{-3/2})$ by \eqref{348c1} we just need to show that the summation term in \eqref{bc8u2} is $o(n^{1/2})$. To prove this, we extract  $\bx_{i_2}$ from $\bw_1^\intercal\hat{\bX}_{i_1}\hat{\bX}_{i_1}^{\intercal}\hat{\bD}_{i_1}^{-1}\tilde{\bSigma}_1 \bD_{i_1}^{-1}\bw_1$, and decompose $\Delta_1$ into eight terms:
\begin{eqnarray*}\begin{split}
&\Delta_{11}=\sum_{i_1\neq i_2}E \bw_1^\intercal\hat{\bX}_{i_1i_2}\hat{\bX}_{i_1i_2}^{\intercal}\hat{\bD}_{i_1i_2}^{-1}\tilde{\bSigma}_1 \bD_{i_1i_2}^{-1}\bw_1\times \bw_1^\intercal\hat{\bX}_{i_1i_2}\hat{\bX}_{i_1i_2}^{\intercal}\hat{\bD}_{i_1i_2}^{-1}\tilde{\bSigma}_1\tilde{\bx}_{i_2} \bx_{i_2}^\intercal \bD_{i_1i_2}^{-1}\bw_1,\\
&\Delta_{12}=\sum_{i_1\neq i_2}E \bw_1^\intercal\hat{\bx}_{i_2}\hat{\bx}_{i_2}^{\intercal}\hat{\bD}_{i_1i_2}^{-1}\tilde{\bSigma}_1 \bD_{i_1i_2}^{-1}\bw_1\times \bw_1^\intercal\hat{\bX}_{i_1i_2}\hat{\bX}_{i_1i_2}^{\intercal}\hat{\bD}_{i_1i_2}^{-1}\tilde{\bSigma}_1\tilde{\bx}_{i_2} \bx_{i_2}^\intercal \bD_{i_1i_2}^{-1}\bw_1,\\
&\Delta_{13}=\sum_{i_1\neq i_2}E \hat{\alpha}_{i_1i_2}\bw_1^\intercal\hat{\bX}_{i_1i_2}\hat{\bX}_{i_1i_2}^{\intercal}\hat{\bD}_{i_1i_2}^{-1}\tilde{\bSigma}_1\hat{\bx}_{i_2} \bx_{i_2}^\intercal \hat{\bD}_{i_1i_2}^{-1}\tilde{\bSigma}_1 \bD_{i_1i_2}^{-1}\bw_1 \bw_1^\intercal\hat{\bX}_{i_1i_2}\hat{\bX}_{i_1i_2}^{\intercal}\hat{\bD}_{i_1i_2}^{-1}\tilde{\bSigma}_1\tilde{\bx}_{i_2} \bx_{i_2}^\intercal \bD_{i_1i_2}^{-1}\bw_1,\\
&\Delta_{14}=\sum_{i_1\neq i_2}E \hat{\alpha}_{i_1i_2}\bw_1^\intercal\hat{\bx}_{i_2}\hat{\bx}_{i_2}^{\intercal}\hat{\bD}_{i_1i_2}^{-1}\tilde{\bSigma}_1\hat{\bx}_{i_2} \bx_{i_2}^\intercal \hat{\bD}_{i_1i_2}^{-1}\tilde{\bSigma}_1 \bD_{i_1i_2}^{-1}\bw_1\times \bw_1^\intercal\hat{\bX}_{i_1i_2}\hat{\bX}_{i_1i_2}^{\intercal}\hat{\bD}_{i_1i_2}^{-1}\tilde{\bSigma}_1\tilde{\bx}_{i_2} \bx_{i_2}^\intercal \bD_{i_1i_2}^{-1}\bw_1,\\
&\Delta_{15}=\sum_{i_1\neq i_2}E \alpha_{i_1i_2}\bw_1^\intercal\hat{\bX}_{i_1i_2}\hat{\bX}_{i_1i_2}^{\intercal}\hat{\bD}_{i_1i_2}^{-1}\tilde{\bSigma}_1 \bD_{i_1i_2}^{-1}\tilde{\bSigma}_1 \bx_{i_2} \bx_{i_2}^\intercal \bD_{i_1i_2}^{-1}\bw_1\\ &\relphantom{EEEEEE} \times \bw_1^\intercal\hat{\bX}_{i_1i_2}\hat{\bX}_{i_1i_2}^{\intercal}\hat{\bD}_{i_1i_2}^{-1}\tilde{\bSigma}_1\tilde{\bx}_{i_2} \bx_{i_2}^\intercal \bD_{i_1i_2}^{-1}\bw_1,\\
&\Delta_{16}=\sum_{i_1\neq i_2}E \alpha_{i_1i_2}\bw_1^\intercal\hat{\bx}_{i_2}\hat{\bx}_{i_2}^{\intercal}\hat{\bD}_{i_1i_2}^{-1}\tilde{\bSigma}_1 \bD_{i_1i_2}^{-1}\tilde{\bSigma}_1 \bx_{i_2} \bx_{i_2}^\intercal \bD_{i_1i_2}^{-1}\bw_1\times \bw_1^\intercal\hat{\bX}_{i_1i_2}\hat{\bX}_{i_1i_2}^{\intercal}\hat{\bD}_{i_1i_2}^{-1}\tilde{\bSigma}_1\tilde{\bx}_{i_2} \bx_{i_2}^\intercal \bD_{i_1i_2}^{-1}\bw_1,\\
&\Delta_{17}=\sum_{i_1\neq i_2}E \hat{\alpha}_{i_1i_2}\alpha_{i_1i_2}\bw_1^\intercal\hat{\bX}_{i_1i_2}\hat{\bX}_{i_1i_2}^{\intercal}\hat{\bD}_{i_1i_2}^{-1}\tilde{\bSigma}_1\hat{\bx}_{i_2} \hat{\bx}_{i_2}^\intercal \hat{\bD}_{i_1i_2}^{-1}\tilde{\bSigma}_1 \bD_{i_1i_2}^{-1}\tilde{\bSigma}_1 \bx_{i_2} \bx_{i_2}^\intercal \bD_{i_1i_2}^{-1}\bw_1\\&\relphantom{EEEEEE}  \times \bw_1^\intercal\hat{\bX}_{i_1i_2}\hat{\bX}_{i_1i_2}^{\intercal}\hat{\bD}_{i_1i_2}^{-1}\tilde{\bSigma}_1\tilde{\bx}_{i_2} \bx_{i_2}^\intercal \bD_{i_1i_2}^{-1}\bw_1,\\
&\Delta_{18}=\sum_{i_1\neq i_2}E \hat{\alpha}_{i_1i_2}\alpha_{i_1i_2}\bw_1^\intercal\hat{\bx}_{i_2}\hat{\bx}_{i_2}^{\intercal}\hat{\bD}_{i_1i_2}^{-1}\tilde{\bSigma}_1\hat{\bx}_{i_2} \hat{\bx}_{i_2}^\intercal \hat{\bD}_{i_1i_2}^{-1}\tilde{\bSigma}_1 \bD_{i_1i_2}^{-1}\tilde{\bSigma}_1 \bx_{i_2} \bx_{i_2}^\intercal \bD_{i_1i_2}^{-1}\bw_1\\&\relphantom{EEEEEE}  \times \bw_1^\intercal\hat{\bX}_{i_1i_2}\hat{\bX}_{i_1i_2}^{\intercal}\hat{\bD}_{i_1i_2}^{-1}\tilde{\bSigma}_1\tilde{\bx}_{i_2} \bx_{i_2}^\intercal \bD_{i_1i_2}^{-1}\bw_1.
\end{split}\end{eqnarray*} Next we consider above eight terms one by one.

\indent For $\Delta_{11}$, by calculating expectation with respect to $\bx_{i_2}$, we have
$$|\Delta_{11}| =  |\sum_{i_1\neq i_2}E\tilde{x}_{11}^2 E\bw_1^\intercal\hat{\bX}_{i_1i_2}\hat{\bX}_{i_1i_2}^{\intercal}\hat{\bD}_{i_1i_2}^{-1}\tilde{\bSigma}_1 \bD_{i_1i_2}^{-1}\bw_1\times \bw_1^\intercal\hat{\bX}_{i_1i_2}\hat{\bX}_{i_1i_2}^{\intercal}\hat{\bD}_{i_1i_2}^{-1}\tilde{\bSigma}_1 \bD_{i_1i_2}^{-1}\bw_1|=o(n^{1/2}).$$
We conclude that $|\Delta_{12}|=o(1)$ from  
\begin{equation}\begin{aligned}
&E|\bx_{i_2}^\intercal \bD_{i_1i_2}^{-1}\bw_1\bw_1^\intercal\hat{\bx}_{i_2}|^2=O(n^{-2}),\\
&E|\hat{\bx}_{i_2}^{\intercal}\hat{\bD}_{i_1i_2}^{-1}\tilde{\bSigma}_1 \bD_{i_1i_2}^{-1}\bw_1\times \bw_1^\intercal\hat{\bX}_{i_1i_2}\hat{\bX}_{i_1i_2}^{\intercal}\hat{\bD}_{i_1i_2}^{-1}\tilde{\bSigma}_1\tilde{\bx}_{i_2}|^2=o(n^{-2}).
\end{aligned}\end{equation}
For $\Delta_{13}$, we find
   \begin{equation}\label{4236q}\begin{split}|\Delta_{13}|&\leq \sum_{i_1\neq i_2}(E|\bx_{i_2}^\intercal \hat{\bD}_{i_1i_2}^{-1}\tilde{\bSigma}_1 \bD_{i_1i_2}^{-1}\bw_1\times \bw_1^\intercal\hat{\bX}_{i_1i_2}\hat{\bX}_{i_1i_2}^{\intercal}\hat{\bD}_{i_1i_2}^{-1}\tilde{\bSigma}_1\tilde{\bx}_{i_2}|^2)^{1/2}\\
&\times (E| \bx_{i_2}^\intercal \bD_{i_1i_2}^{-1}\bw_1\bw_1^\intercal\hat{\bX}_{i_1i_2}\hat{\bX}_{i_1i_2}^{\intercal}\hat{\bD}_{i_1i_2}^{-1}\tilde{\bSigma}_1\hat{\bx}_{i_2}|^2)^{1/2}\\&=o(1).
\end{split}\end{equation}
 It follows from
\begin{equation}\label{kfc9p}\begin{aligned}
&E|\hat{\bx}_{i_2}^{\intercal}\hat{\bD}_{i_1i_2}^{-1}\tilde{\bSigma}_1\hat{\bx}_{i_2}|^4 = O(1),\\
&E| \bx_{i_2}^\intercal \hat{\bD}_{i_1i_2}^{-1}\tilde{\bSigma}_1 \bD_{i_1i_2}^{-1}\bw_1\times \bw_1^\intercal\hat{\bX}_{i_1i_2}\hat{\bX}_{i_1i_2}^{\intercal}\hat{\bD}_{i_1i_2}^{-1}\tilde{\bSigma}_1\tilde{\bx}_{i_2}|^2=o(n^{-2}),\\
&E| \bx_{i_2}^\intercal \bD_{i_1i_2}^{-1}\bw_1\bw_1^\intercal\hat{\bx}_{i_2}|^4=o(n^{-3}).
\end{aligned}\end{equation}
that $|\Delta_{14}|=o(n^{1/4})$. By the same argument as \eqref{4236q} $\Delta_{15}$ is of order $o(1)$.  For $\Delta_{16}$, one can verify that
\begin{equation}
E|\hat{\bx}_{i_2}^{\intercal}\hat{\bD}_{i_1i_2}^{-1}\tilde{\bSigma}_1 \bD_{i_1i_2}^{-1}\tilde{\bSigma}_1 \bx_{i_2}|^4 = O(1).
\end{equation}
This, together with \eqref{kfc9p}, implies
 \begin{equation}\begin{aligned}
|\Delta_{16}|&\leq \sum_{i_1\neq i_2}(E|\hat{\bx}_{i_2}^{\intercal}\hat{\bD}_{i_1i_2}^{-1}\tilde{\bSigma}_1 \bD_{i_1i_2}^{-1}\tilde{\bSigma}_1 \bx_{i_2} |^4)^{1/4}\times (E| \bx_{i_2}^\intercal \bD_{i_1i_2}^{-1}\bw_1 \bw_1^\intercal\hat{\bX}_{i_1i_2}\hat{\bX}_{i_1i_2}^{\intercal}\hat{\bD}_{i_1i_2}^{-1}\tilde{\bSigma}_1\tilde{\bx}_{i_2} |^2)^{1/2}\\&\relphantom{HHHH} \times (E |\bx_{i_2}^\intercal \bD_{i_1i_2}^{-1}\bw_1w_1^\intercal\hat{\bx}_{i_2}|^4)^{1/4}\\
&=o(n^{1/4}).
\end{aligned}\end{equation}
For $\Delta_{17}$ and $\Delta_{18}$, we have
\begin{equation}\begin{aligned}
|\Delta_{17}|&\leq \sum_{i_1\neq i_2}(E|\hat{\bx}_{i_2}^\intercal \hat{\bD}_{i_1i_2}^{-1}\tilde{\bSigma}_1 \bD_{i_1i_2}^{-1}\tilde{\bSigma}_1 \bx_{i_2} |^4)^{1/4}\times (E|\bx_{i_2}^\intercal \bD_{i_1i_2}^{-1}\bw_1  \bw_1^\intercal\hat{\bX}_{i_1i_2}\hat{\bX}_{i_1i_2}^{\intercal}\hat{\bD}_{i_1i_2}^{-1}\tilde{\bSigma}_1\tilde{\bx}_{i_2}|^2)^{1/2}\\&\relphantom{HHHH} \times (E | \bx_{i_2}^\intercal \bD_{i_1i_2}^{-1}\bw_1\bw_1^\intercal\hat{\bX}_{i_1i_2}\hat{\bX}_{i_1i_2}^{\intercal}\hat{\bD}_{i_1i_2}^{-1}\tilde{\bSigma}_1\hat{\bx}_{i_2}|^4)^{1/4}\\
&=o(n^{1/4}),\\
|\Delta_{18}|&\leq\sum_{i_1\neq i_2}(E|\hat{\bx}_{i_2}^{\intercal}\hat{\bD}_{i_1i_2}^{-1}\tilde{\bSigma}_1\hat{\bx}_{i_2} |^4)^{1/4}\times (E| \hat{\bx}_{i_2}^\intercal \hat{\bD}_{i_1i_2}^{-1}\tilde{\bSigma}_1 \bD_{i_1i_2}^{-1}\tilde{\bSigma}_1 \bx_{i_2} |^4)^{1/4}\\&\relphantom{HHHH} \times (E | \bx_{i_2}^\intercal \bD_{i_1i_2}^{-1}\bw_1   \bw_1^\intercal\tilde{\bx}_{i_2}|^4)^{1/4}\times (E|\bx_{i_2}^\intercal H\bD_{i_1i_2}^{-1}\bw_1\bw_1^\intercal\hat{\bx}_{i_2}|^4)^{1/4}\\&=o(n^{1/4}).
\end{aligned}\end{equation}
Combining above arguments we conclude that $\Delta_1 \rightarrow 0$ and therefore the third term in \eqref{45non} is $o_p(1)$. 

Let $\breve{x}_{ij} = \frac{\hat{x}_{ij}}{\sqrt{n}\sigma_n}$, with $\sigma_n^2=E|\hat{x}_{ij}|^2$, $\breve{\bX}=(\breve{x}_{ij})$. We have \begin{equation}\begin{aligned}&\sqrt{n}(\bw_1^{\intercal}\hat{\bX}(\bI-\hat{\bX}^{\intercal}\tilde{\bSigma}_1\hat{\bX})^{-1}\hat{\bX}^{\intercal}\bw_1-\bw_1^{\intercal}\breve{\bX}(\bI-\breve{\bX}^{\intercal}\tilde{\bSigma}_1\breve{\bX})^{-1}\breve{\bX}^{\intercal}\bw_1)\\&=\sqrt{n}(n\sigma_n^2-1)\bw_1^{\intercal}\hat{\bX}(n\sigma_n^2 \bI-\hat{\bX}^{\intercal}\tilde{\bSigma}_1\hat{\bX})^{-1}(\bI-\hat{\bX}^{\intercal}\tilde{\bSigma}_1\hat{\bX})^{-1}\hat{\bX}^{\intercal}\bw_1=o_p(1),
\end{aligned}\end{equation}
where we use $1-n\sigma_n^2 = Eq_{11}^2 I ({|q_{11}|>\eta_n n^{1/4}})+E^2q_{11}I(|q_{11}| >\eta_n n^{1/4})=o(n^{-1/2})$.\\
\indent Up to now, we conclude that truncation at $\eta_n n^{1/4}$ and centralization do not influence the limiting distribution of the statistics in Theorem \ref{Th21}.\\

\subsection{Proof of Lemma \ref{lem31}}
\begin{proof}
For simplicity, we omit the index of population, thus denote the population covariance by $\bSigma$ and the sample covariance by $\bS=\frac{1}{n}\bY\bY^\intercal =\bSigma^{\frac{1}{2}} \bX\bX^\intercal \bSigma^{\frac{1}{2}}$. To conclude \eqref{aah92} it suffices to show the following two facts:
\begin{eqnarray}\label{apt21}  &\max_{1\leq i\leq p}|\bS_{ii}-\bSigma_{ii}|\stackrel{D}\rightarrow 0,\\
&\frac{\sum_{i=1}^p|\bS_{ii}+\bSigma_{ii}|}{p}=O_p(1). \label{apt22}
\end{eqnarray}
The proof of (\ref{apt22}) is trivial, since $$\frac{\sum_{i=1}^p|\bS_{ii}+\bSigma_{ii}|}{p}=\frac{tr(\bS)+tr(\bSigma)}{p}=O_p(1). $$

To derive (\ref{apt21}) we first truncate and centralize $\bX$. Select $\eta_n \rightarrow 0$ such that $\eta_n^{-4}\int_{|q_{11}| > \eta_n n^{1/2}} |q_{11}|^4\rightarrow 0$. Let $\hat{\bX}' = (\hat{x}_{ij}')_{p\times n}$ where $\hat{x}_{ij}' = \frac{1}{\sqrt{n}}q_{ij}I(|q_{ij}| \leq \eta_n n^{1/2})$, and $\hat{\bS}' =\bSigma^{1/2}\hat{\bX}' (\hat{\bX}')^\intercal \bSigma^{1/2} $.  Then define the truncated and centralized matrix $\breve{\bX}'= (\breve{x}_{ij}')_{p\times n}$, where $\breve{x}_{ij}' = (\hat{x}_{ij}' -E\hat{x}_{ij}' )/(\sqrt{n}\sigma_n')$, with $\sigma_n'=E^{1/2}|\hat{x}_{ij}' -E\hat{x}_{ij}'|^2$, and  $\breve{\bS}' =\bSigma^{1/2}\breve{\bX}' (\breve{\bX}')^{\intercal}\bSigma^{1/2} $.
We have $$P(\bS \neq \hat{\bS}') \leq pn P(|q_{11}|\geq \eta_n\sqrt{n})  =o(1). $$
It follows that for any $\epsilon > 0$,
$$P(\max_{1\leq i \leq p} |\bS_{ii}-\hat{\bS}'_{ii}| >\epsilon)=o(1) $$
For any deterministic unit vector $\bs_1$, we have \begin{eqnarray*}
\begin{aligned}
&E|\bs_1^\intercal  \hat{\bS}'\bs_1-\bs_1^\intercal \breve{\bS}' \bs_1| \\&\leq(E|\bs_1^\intercal \bSigma^{1/2}( \hat{\bX'}-\breve{\bX'})(\hat{\bX'})^\intercal \bSigma^{1/2} \bs_1| + E|\bs_1^\intercal \bSigma^{1/2} \breve{\bX'} ( \hat{\bX'}-\breve{\bX'})^\intercal \bSigma^{1/2} \bs_1|)\\
&\leq C (\frac{|\sqrt{n}\sigma_n' -1|}{\sqrt{n}\sigma_n'} + \frac{|E\hat{x}'_{11}|}{\sigma_n'}) = o(n^{-1})
\end{aligned}
\end{eqnarray*}
where the last step uses two facts that $|n\sigma_n'^2-1| \leq 2\int_{\{|q_{11}|\geq \eta_n n^{1/2}\}}|q_{11}|^2= o(n^{-1})$, and that $|E\hat{x}'_{11}| \leq \int_{\eta_n n^{1/2}}^\infty P(|q_{11}|\geq t)dt= o(n^{-3/2})$.
From above estimates, we obtain for any $\epsilon > 0$,
$$P(\max_{1\leq i \leq p} |\hat{\bS}'_{ii}-\breve{\bS}'_{ii}| >\epsilon)\leq \sum_{i=1}^p \frac{E|\be_i^\intercal  \hat{\bS}' \be_i-\be_i^\intercal \breve{\bS}' \be_i|}{\epsilon}=o(1). $$
Therefore we below assume the underlying variables are truncated at $\eta_n n^{1/2}$ and centralized.

\indent Then for any deterministic unit vector $\bs_1$, using Lemma 2.1 in \cite{bai2004}, and the Burkh\"{o}lder's inequality,  we have
\begin{eqnarray}
\begin{aligned}
&E|\bs_1^\intercal \bSigma^{1/2}\bX\bX^\intercal \bSigma^{1/2}\bs_1-\bs_1^\intercal \bSigma \bs_1|^4\\&\leq E|\sum_{k=1}^n (E_k-E_{k-1})\bs_1^\intercal \bSigma^{1/2}\bX\bX^\intercal \bSigma^{1/2}\bs_1|^4
\\&\leq  E|\sum_{k=1}^n (E_k-E_{k-1}) (\bs_1^\intercal \bSigma^{1/2}\bx_k\bx_k^\intercal \bSigma^{1/2}\bs_1)|^4
\\&=n \sum_{k=1}^n E|\bs_1^\intercal \bSigma^{1/2}\bx_k\bx_k^\intercal \bSigma^{1/2}\bs_1|^4 = o(n^{-1}).
\end{aligned}
\end{eqnarray}
This implies (\ref{apt21}) and hence \eqref{aah92}.

Next we show that \eqref{aah93} holds. The law of large numbers ensures that
 \begin{equation}\label{gac5j}
\frac{1}{pn}\sum_{i=1}^n (\by_i^\intercal \bSigma_1\by_i - tr \bSigma_1)^2 - \mathfrak{M} \stackrel{i.p.} \rightarrow 0.
\end{equation}
Note that
\begin{equation}\label{34lv3}\begin{aligned}
&\left|\frac{1}{pn}\sum_{i=1}^n (\by_i^\intercal \bSigma_1 \by_i - tr \bS_1)^2 -\frac{1}{pn}\sum (\by_i^\intercal \bSigma_1 \by_i - tr \bSigma_1)^2\right| \\
&\leq \frac{2}{pn}\sum_{i=1}^n \left|\left(\by_i^\intercal \bSigma_1 \by_i -tr \bSigma_1\right)\left(tr \bS_1-tr\bSigma_1 \right)\right|  +\frac{1}{p}|tr \bS_1 -tr \bSigma_1|^2.
\end{aligned}\end{equation}
The second term is $o_p(1)$ by  the law of large numbers. 
Also from the law of large numbers, \begin{equation}\label{890bm}
\frac{1}{n}\sum_{i=1}^n \frac{1}{\sqrt{p}}|\by_i^\intercal \bSigma_1 \by_i -tr \bSigma_1| - E \frac{1}{\sqrt{p}}|\by_i^\intercal \bSigma_1 \by_i -tr \bSigma_1|
\stackrel{i.p.}\rightarrow0.
\end{equation}
Moreover (\ref{543ah}) implies that $E \frac{1}{\sqrt{p}}|\by_i^\intercal \bSigma_1 \by_i -tr \bSigma_1|$ is bounded and  from Lemma \ref{Lem4.2} $tr \bS_1 -tr \bSigma_1$ is asymptotically normal.
These imply that the first term in \eqref{34lv3} is also $o_p(1)$. Thus we have
\begin{equation}\label{402n9}
\left|\frac{1}{pn}\sum_{i=1}^n (\by_i^\intercal \bSigma_1 \by_i - tr \bS_1)^2 -\frac{1}{pn}\sum_{i=1}^n (\by_i^\intercal \bSigma_1 \by_i - tr \bSigma_1)^2\right| \stackrel{i.p.}\rightarrow 0.
\end{equation}
From \eqref{gac5j} and \eqref{402n9}, we conclude \eqref{aah93}.
\end{proof}

\subsection{Proof of Lemma \ref{Evbhighprob}}
\indent First we show that $\mathcal{B}_1$ holds with high probability. By Lemma 3.1 in \cite{bai2012}, $\psi(\alpha)$ lies outside the support of $F^{c,H}$ and $F^{c_n,H_n}$. Thus we can select a small interval $[a,b]$ containing $\psi(\alpha)$ and $\psi_n(\alpha)$ for sufficiently large $n$, such that this interval lies outside the support of $F^{c,H}$ and $F^{c_n,H_n}$. By carefully checking on the proofs of \cite{bai1998no} one can show that under the truncation at $\eta_n n^{1/4}$ no eigenvalues of $\bX^\intercal \bSigma_{1P} \bX$ lies in $[a,b]$ with high probability.  Therefore, $\mathcal{B}_1$ holds with high probability. Actually, we note that proof of Lemma C.3 in \cite{jiang2019} also uses this conclusion.  Following a similar argument $\mathcal{B}_{1k}$ and $\mathcal{B}_{1jk}$ hold with high probability.

We next show that the event $\mathcal{B}_{2k}$ and  $\mathcal{B}_{3k}$ hold with high probability. For any small positive constant $\epsilon$, and large positive constant $l$, using \eqref{lar11}, we find
 \begin{equation}\label{9b3la}
P\left(|\bx_k^\intercal \tilde{\bSigma}_1 ({\bD_k^\intercal})^{-1}\bx_k I(\mathcal{B}_{1k})-\frac{1}{n}tr\tilde{\bSigma}_1 ({\bD_k^\intercal})^{-1}I(\mathcal{B}_{1k})|> \epsilon \right)\leq n^{-l}.
\end{equation}
By Burkholder's inequality
\begin{equation*}\begin{aligned}&E|tr \frac{1}{n}\tilde{\bSigma}_1(\bD_1^{\intercal})^{-1}I(\mathcal{B}_{11})-E  tr
\frac{1}{n}\tilde{\bSigma}_1(\bD_1^{\intercal})^{-1}I(\mathcal{B}_{11})|^p\\&
\leq \frac{1}{n^p}E|\sum_{j=2}^{n}(E_j-E_{j-1})[tr\tilde{\bSigma}_1(\bD_1^{\intercal})^{-1}I(\mathcal{B}_{11})- tr\tilde{\bSigma}_1 (\bD_{1j}^{\intercal})^{-1}I(\mathcal{B}_{11j})|^p \\
& \leq \frac{C_p}{n^p}E(\sum_{j=2}^n|(E_j-E_{j-1})\bx_j^{\intercal} \tilde{\bSigma}_1 (\bD_1^\intercal)^{-1}\tilde{\bSigma}_1 (\bD_{1j}^\intercal)^{-1}\bx_j I(\mathcal{B}_{11}\mathcal{B}_{11j})|^2)^{\frac{p}{2}}\\
&\leq \frac{C_p}{n^{p/2+1}}\sum_{j=2}^{n}E|(E_j-E_{j-1})\bx_j^{\intercal} \tilde{\bSigma}_1  (\bD_1^\intercal)^{-1}\tilde{\bSigma}_1 (\bD_{1j}^\intercal)^{-1}\bx_j I(\mathcal{B}_{11}\mathcal{B}_{11j})|^{p}\\&\leq \frac{C_p}{n^{p/2}}E|\bx_j^{\intercal} \tilde{\bSigma}_1 (\bD_{1}^\intercal)^{-1}\tilde{\bSigma}_1  (\bD_{1j}^\intercal)^{-1}\bx_j I(\mathcal{B}_{11}\mathcal{B}_{11j})|^{p}\\&\leq\frac{C_p}{n^{p/2}} E|\bx_j^{\intercal}\bx_j|^p =O(n^{-\frac{p}{2}}),
\end{aligned}
\end{equation*}
where we use the fact that $\mathcal{B}_{11}\subseteq\mathcal{B}_{11j}$. 
This implies that
\begin{equation}\label{2gm91}
P\left(|\frac{1}{n}tr\tilde{\bSigma}_1 ({\bD_k^\intercal})^{-1}I(\mathcal{B}_{1k})-E\frac{1}{n}tr\tilde{\bSigma}_1 ({\bD_k^\intercal})^{-1}I(\mathcal{B}_{1k})|> \epsilon \right)\leq n^{-l}.
\end{equation}
We claim that 
\begin{eqnarray}\label{rg42m}
E\Big(\frac{1}{n}tr\tilde{\bSigma}_1 ({\bD_k^\intercal})^{-1}I(\mathcal{B}_{1k})\Big)\rightarrow 1 +\frac{1}{\theta \underline{m}(\theta)}, \; \text{as} \; n\rightarrow \infty,  \end{eqnarray}
(to be proved later).
We conclude from \eqref{9b3la},\eqref{2gm91} and \eqref{rg42m} that
\begin{equation}
P\left(\left|\bx_k^\intercal \tilde{\bSigma}_1 ({\bD_k^\intercal})^{-1}\bx_k I(\mathcal{B}_{1k})- ( 1 +\frac{1}{\theta \underline{m}(\theta)})\right| > \epsilon \right) =o(n^{-l}),
\end{equation}
and \begin{equation}
P\left(\left|\frac{1}{n}tr\tilde{\bSigma}_1 ({\bD_k^\intercal})^{-1}I(\mathcal{B}_{1k})- ( 1 +\frac{1}{\theta \underline{m}(\theta)})\right| > \epsilon \right) =o(n^{-l}).
\end{equation}
Thus the events $\mathcal{B}_{2k}$ and $\mathcal{B}_{3k}$ hold with high probability. Using the similar arguments we conclude that $\mathcal{B}_{2jk}$ and $\mathcal{B}_{3jk}$ hold with high probability. To conclude, the event $\mathcal{B}$ holds with high probability.

Finally we show \eqref{rg42m}.  Lemma 3.3 in \cite{bai1999exact}, together with the fact $\bx_k^\intercal \tilde{\bSigma}_1 ({\bD_k^\intercal})^{-1}\bx_k=\bx_k^\intercal \btSigma_1^{1/2}(\bI-\btSigma_1^{1/2}\bX_k\bX_k^*\btSigma_1^{1/2})^{-1}\btSigma_1^{1/2}\bx_k $, ensures that \begin{equation}\label{54ncq}
\bx_k^\intercal \tilde{\bSigma}_1 ({\bD_k^\intercal})^{-1}\bx_k I(\mathcal{B}_{1k}) \stackrel{a.s.}\rightarrow 1 +\frac{1}{\theta \underline{m}(\theta)}, \; \text{as} \; n\rightarrow \infty.
\end{equation}
We need to be careful here that although Lemma 3.3 in \cite{bai1999exact} is derived under the truncation of entries of $\bX$ at a constant, we can still get the same conclusion using our truncation.
From \eqref{9b3la}, we see that   \begin{equation}\label{7n2hp}|\bx_k^\intercal \tilde{\bSigma}_1 ({\bD_k^\intercal})^{-1}\bx_k I(\mathcal{B}_{1k})-\frac{1}{n}tr\tilde{\bSigma}_1 ({\bD_k^\intercal})^{-1}I(\mathcal{B}_{1k})|\stackrel{a.s.}\rightarrow 0. \end{equation}
From \eqref{54ncq} and \eqref{7n2hp}, it follows that \begin{eqnarray}\label{32ndr}\frac{1}{n}tr\tilde{\bSigma}_1 ({\bD_k^\intercal})^{-1}I(\mathcal{B}_{1k})\stackrel{a.s.}\rightarrow 1 +\frac{1}{\theta \underline{m}(\theta)}.\end{eqnarray}
The dominated convergence theorem implies \eqref{rg42m}.

\end{document}